\theoremstyle:=definition,remark,plain\do{%
        \expandafter\g@addto@macro\csname th@\theoremstyle\endcsname{%
            \addtolength\thm@preskip\parskip
            }%
        }
\newcommand{\wt}{\widetilde}
\newtheorem{thm}{Theorem}
\newtheorem{lemma}{Lemma}[section]
\newtheorem{cor}[lemma]{Corollary}
\newtheorem{prop}[thm]{Proposition}
\newtheorem{coro}[thm]{Corollary}
\newtheorem{construction}{Construction}[section]
\newtheorem{claim}[lemma]{Claim}
\newtheorem{definition}[lemma]{Definition}
\title{}
\author{}
\begin{document}

\title{\vspace{-0.5in} A hypergraph analog \\
of Dirac's Theorem for long cycles in 2-connected graphs}

\author{
{{Alexandr Kostochka}}\thanks{
\footnotesize {University of Illinois at Urbana--Champaign, Urbana, IL 61801. E-mail: \texttt {kostochk@illinois.edu}.
 Research 
is supported in part by  NSF  Grant DMS-2153507.
}}
\and
{{Ruth Luo}}\thanks{
\footnotesize {University of South Carolina, Columbia, SC 29208, USA. E-mail: \texttt {ruthluo@sc.edu}.
}}
\and{{Grace McCourt}}\thanks{University of Illinois at Urbana--Champaign, Urbana, IL 61801, USA. E-mail: {\tt mccourt4@illinois.edu}. Research 
is supported in part by NSF RTG grant DMS-1937241.}}

\date{ \today}
\maketitle

\vspace{-0.3in}

\begin{abstract}
Dirac proved that each $n$-vertex $2$-connected graph with minimum degree at least $k$ contains a cycle  of length at least $\min\{2k, n\}$. We consider a hypergraph version of this result. A {\em Berge cycle} in a hypergraph is an alternating sequence of distinct vertices and edges $v_1,e_2,v_2, \ldots, e_c, v_1$ such that $\{v_i,v_{i+1}\} \subseteq e_i$ for all $i$ (with indices taken modulo $c$). We prove that for $n \geq k \geq r+2 \geq 5$, every $2$-connected $r$-uniform $n$-vertex hypergraph with minimum degree at least ${k-1 \choose r-1} + 1$ has a Berge cycle of length at least $\min\{2k, n\}$. The bound is exact for all
$k\geq r+2\geq 5$.

\medskip\noindent
{\bf{Mathematics Subject Classification:}} 05D05, 05C65, 05C38, 05C35.\\
{\bf{Keywords:}} Berge cycles, extremal hypergraph theory, minimum degree.
\end{abstract}

\section{Introduction and Results}
\subsection{Terminology and known results for graphs}
 A hypergraph $H$ is a family of subsets of a ground set. We refer to these subsets as the {\em edges} of $H$ and the elements of the ground set as the {\em vertices} of $H$. We use $E(H)$ and $V(H)$ to denote the set of edges and the set of vertices of $H$ respectively. We say $H$ is {\em $r$-uniform}  ($r$-graph, for short) if every edge of $H$ contains exactly $r$ vertices. A {\em graph} is a 2-graph. 
 For a hypergraph $H$ and $A\subseteq V(H)$, by $H[A]$ we denote the subhypergraph of $H$ induced by $A$.

The {\em degree} $d_H(v)$ of a vertex $v$ in a hypergraph $H$ is the number of edges containing $v$. When there is no ambiguity, we may drop the subscript $H$ and simply use $d(v)$. The {\em minimum degree}, $\delta(H)$, is the minimum over degrees of all vertices of $H$. 

A {\em hamiltonian cycle} in a graph is a cycle which visits every vertex. 
 Sufficient conditions for existence of hamiltonian cycles in graphs have been well-studied. In particular, a famous result of  Dirac from 1952 is: 

\begin{thm}[Dirac~\cite{D}]\label{dirac}
Let $n \geq 3$. If $G$ is an $n$-vertex graph with minimum degree $\delta(G) \geq n/2$, then $G$ has a hamiltonian cycle.
\end{thm}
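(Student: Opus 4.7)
The plan is the classical longest-path argument. First I would verify \textbf{connectivity}: if $G$ had two components $A$ and $B$, every vertex of $A$ would have all its $\geq n/2$ neighbors inside $A$, forcing $|A| > n/2$, and likewise $|B| > n/2$, contradicting $|A|+|B| \leq n$.

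Next, take a longest path $P = v_1 v_2 \cdots v_\ell$ in $G$. By maximality, every neighbor of $v_1$ and every neighbor of $v_\ell$ lies in $V(P)$; in particular, $\ell-1 \geq d(v_1) \geq n/2$ and similarly for $v_\ell$.

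The key step is to produce a cycle on $V(P)$. Define
\[
S = \{\, i \in \{1,\ldots,\ell-1\} : v_1 v_{i+1} \in E(G)\,\}, \qquad T = \{\, i \in \{1,\ldots,\ell-1\} : v_i v_\ell \in E(G)\,\}.
\]
Then $|S| = d(v_1) \geq n/2$ and $|T| = d(v_\ell) \geq n/2$, so $|S| + |T| \geq n \geq \ell > \ell - 1$, and hence $S \cap T \neq \emptyset$. For any $i \in S \cap T$, the closed walk
\[
C : v_1, v_2, \ldots, v_i, v_\ell, v_{\ell-1}, \ldots, v_{i+1}, v_1
\]
is a cycle using exactly the vertices of $P$. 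This pigeonhole-on-endpoint-neighborhoods is where the hypothesis $\delta(G) \geq n/2$ is really used, and is the step I expect to be the heart of the argument.

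Finally, I would show $V(C) = V(G)$. If some vertex $u \notin V(C)$ existed, connectivity would give a path from $u$ to $V(C)$; taking a shortest such path and letting $w \in V(C)$ be its first vertex on $C$ produces a neighbor $w$ of some $u' \notin V(C)$ adjacent to $C$. Cutting $C$ at $w$ gives a Hamilton path of $V(C)$ ending at $w$, which concatenates with the edge $wu'$ to yield a path on $\ell+1$ vertices, contradicting the maximality of $P$. Therefore $C$ is a Hamiltonian cycle of $G$.
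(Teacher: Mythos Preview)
Your argument is correct and is exactly the classical longest-path/rotation proof of Dirac's theorem: connectivity from the degree condition, a longest path whose endpoints have all neighbors on the path, the pigeonhole step on the shifted neighborhoods $S$ and $T$ to close the path into a cycle on $V(P)$, and then the observation that any outside vertex would lengthen the path. Each step is sound; in particular the counting $|S|+|T|\geq n\geq \ell>\ell-1$ is the right way to force $S\cap T\neq\emptyset$, and the final ``cut the cycle at $w$ and append $u'$'' step is clean.

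There is nothing to compare against in the paper: Theorem~\ref{dirac} is quoted from Dirac~\cite{D} as background and is not proved anywhere in this paper. The paper's own work concerns the hypergraph analog (Theorem~\ref{mainthm}), and its machinery of best lollipops, Lemma~\ref{diraclemma}, etc., is aimed at that result, not at Dirac's original graph theorem. So your proof is not ``the same as'' or ``different from'' the paper's --- it simply supplies a standard proof for a statement the paper only cites.
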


Dirac also proved that every graph $G$ with minimum degree $k \geq 2$ contains a cycle of length at least $k+1$, and that this bound can be significantly strengthened when $G$ is 2-connected.
\begin{thm}[Dirac~\cite{D}]\label{dirac2}
Let $n \geq k \geq 2$. If $G$ is an $n$-vertex, 2-connected graph with minimum degree $\delta(G) \geq k$, then $G$ has a cycle of length at least $\min\{2k, n\}$.
\end{thm}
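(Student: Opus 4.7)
The plan is to argue by contradiction. Assume $G$ is $2$-connected with $\delta(G) \geq k$, yet its longest cycle $C$ has length $c = |C| < \min\{2k, n\}$, so in particular $c < n$ and $c < 2k$. Since $c < n$, there is a component $H$ of $G - V(C)$. Let $U \subseteq V(C)$ be the set of vertices of $C$ with at least one neighbor in $V(H)$; by $2$-connectivity of $G$, $|U| = t \geq 2$, since otherwise the single element of $U$ would be a cut vertex separating $V(H)$ from $V(G) \setminus V(H)$. List $U = \{u_1, \ldots, u_t\}$ in cyclic order on $C$ and write $a_i$ for the length of the arc of $C$ from $u_i$ to $u_{i+1}$ (indices mod $t$), so $\sum_{i=1}^{t} a_i = c$.

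For each $i$, since $H$ is connected and both $u_i$ and $u_{i+1}$ have neighbors in $V(H)$, there is a $u_i$-to-$u_{i+1}$ path $Q_i$ whose internal vertices lie entirely in $V(H)$; pick each $Q_i$ to be a longest such path. Clearly $|Q_i| \geq 2$. Replacing either of the two arcs of $C$ between $u_i$ and $u_{i+1}$ by $Q_i$ yields another cycle of $G$, so by maximality of $|C|$ we obtain $|Q_i| \leq \min(a_i, c - a_i)$. In particular $a_i \geq |Q_i| \geq 2$ for each $i$, and hence $c = \sum a_i \geq 2t$. If $t \geq k$, this already contradicts $c < 2k$. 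From now on, assume $t \leq k-1$.

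In this regime, every $h \in V(H)$ has all of its $\geq k$ neighbors in $V(H) \cup U$, at most $t \leq k-1$ of which lie in $U$, so $d_{G[V(H)]}(h) \geq k - t \geq 1$. Hence $|V(H)| \geq k - t + 1$ and $\delta(G[V(H)]) \geq k - t$. Exploiting this large internal minimum degree, I would then lower bound the length of a longest $u_i$-to-$u_{i+1}$ path in $G[V(H) \cup \{u_i, u_{i+1}\}]$ by a Dirac-type estimate tailored to a graph with two distinguished boundary vertices, using the $\delta \geq k - t$ bound inside $H$ together with the edges from $u_i, u_{i+1}$ into $V(H)$. Combined with $\sum_i |Q_i| \leq \sum_i a_i = c$ from the previous step, the improved bounds on the individual $|Q_i|$ should force $c \geq 2k$, giving the desired contradiction.

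The main obstacle I expect is this last step. While each $Q_i$ is individually constrained by $|Q_i| \leq a_i$, all the $Q_i$'s share the vertex set of $H$ and may reuse the same internal vertices, so I cannot simply sum naive path-length lower bounds. A clean fix likely handles all the $Q_i$'s jointly: either by selecting a single longest path in $G[V(H) \cup U]$ between two attachments and running a global rotation--extension argument that already yields $|Q_i| \geq k$ for some $i$ (forcing $c \geq 2k$ via $|Q_i| \leq c/2$), or by invoking Menger's theorem inside $G[V(H) \cup U]$ to extract many internally disjoint attachment-to-attachment paths whose lengths can be amortized over the $t$ arcs of $C$. The most delicate regime is $t = 2$, where the full burden of generating a long handle through $H$ rests on the single connected subgraph $H$ and its internal minimum degree $k - 2$.
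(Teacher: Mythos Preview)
The paper does not prove Theorem~\ref{dirac2}; it is quoted as a classical result of Dirac and serves only as background motivation. There is therefore no proof in the paper to compare your attempt against.

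On its own merits, your proposal is not a proof but an outline with a self-identified gap. The opening moves are correct and standard: take a longest cycle $C$ of length $c < \min\{2k,n\}$, a component $H$ of $G - V(C)$, its attachment set $U$ on $C$ with $|U| = t \geq 2$, and observe that each arc length $a_i$ between consecutive attachment vertices satisfies $a_i \geq |Q_i| \geq 2$, whence $c \geq 2t$. This disposes of the case $t \geq k$.

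The case $t \leq k-1$ is the entire content of the theorem, and here you offer only a wish list. The bound $\delta(G[V(H)]) \geq k - t$ by itself yields a longest path inside $H$ of length only about $k - t$, not $k$; padding with the two boundary edges to $u_i, u_{i+1}$ still leaves you short by roughly $t-2$. Your summation idea $\sum_i |Q_i| \leq \sum_i a_i = c$ cannot help either, since the $Q_i$ share internal vertices in $H$ and there is no lower bound forcing their lengths to add up to anything large. The rotation--extension route you mention is the right one, but it must actually be executed: take a longest path $P$ from a vertex of $C$ into $H$, note that all neighbors of its far endpoint lie on $P$ or on $C$, and combine this degree count with the arc structure of $C$ to force a cycle of length at least $2k$. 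That argument---essentially Dirac's original---is exactly the template the present paper generalizes to hypergraphs via its ``best lollipop'' machinery in Sections~\ref{setup!} onward. As written, your proposal stops precisely where the real work begins.
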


This theorem is sharp by the following examples. First, for $k\geq 3$, let $V(G_1) = X_1\cup X_2\cup \ldots\cup X_t$ where $|X_i| = k$ and
 $G_1[X_i]=K_k$ 
 for all $1\leq i\leq t$, and there are vertices $u,v$ such that $X_i \cap X_j = \{u_, v\}$ for all $i\neq j$. 
 Since $k\geq 3$, $\delta(G_1)=k-1$, and each cycle in $G_1$ intersects at most $2$ sets $X_i\setminus \{u,v\}$, thus  having length at most $k + k - 2 =2k-2$. Another example is the graph $G_2$ obtained by joining every vertex of the clique $K_{k-1}$ to every vertex of an independent set with $n-(k-1)$ vertices.  Again, $\delta(G_2)=k-1$, 
 and each cycle in $G_2$ has length at most $2(k-1)  = 2k-2$. 
Moreover,  $G_2$ is $(k-1)$-connected. So for $k$ large, one cannot improve the bound in Theorem~\ref{dirac2} 
 by requiring higher connectivity. 

A refinement of Theorem~\ref{dirac2} for bipartite graphs  was obtained by Voss and Zuluaga~\cite{VZ}, which was further refined by Jackson~\cite{jackson2} as follows.

\begin{thm}[Jackson~\cite{jackson2}]\label{jack}
Let  $G$ be a 2-connected bipartite graph with bipartition $(A,B)$, where $|A|\geq |B|$. If each vertex of $A$ has degree at least $a$ and each vertex of $B$ has degree at least $b$,
 then $G$ has a cycle of length at least $2\min\{|B|, a+b-1, 2a-2\}$. Moreover, if $a=b$ and $|A|=|B|$, then $G$ has a cycle of length at least $2\min\{|B|,  2a-1\}$.
\end{thm}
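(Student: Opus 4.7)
My plan is to adapt the classical longest-cycle method, in the spirit of Dirac and Bondy, to the bipartite setting. I will take a longest cycle $C$ of $G$ and write $|C| = 2\ell$, labelling $C = v_0 v_1 \cdots v_{2\ell-1} v_0$ so that $v_i \in A$ for even $i$ and $v_i \in B$ for odd $i$. Assume for contradiction that $\ell < \min\{|B|, a+b-1, 2a-2\}$; the bound $\ell \le |B|$ is automatic since $|V(C) \cap B| = \ell$, so I may really assume $\ell < \min\{a+b-1, 2a-2\}$. Because $\ell < |B| \le |A|$, both $A \setminus V(C)$ and $B \setminus V(C)$ are nonempty, so I can pick $u \in A \setminus V(C)$ and $u' \in B \setminus V(C)$.

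The workhorse will be an \emph{ear lemma}: if $P$ is a path of length $p$ whose endpoints $s,t$ lie on $C$ and whose internal vertices lie off $C$, then each of the two arcs of $C$ between $s$ and $t$ has length at least $p$; otherwise, replacing the shorter arc by $P$ yields a cycle longer than $C$. A key refinement is a \emph{bipartite rotation argument}: if $u \in A \setminus V(C)$ is adjacent to $v_i, v_j$ on $C$, then the successors $v_{i+1}$ and $v_{j+1}$ cannot share any common off-cycle neighbor $v'$, because combining the two new attachments through $u$ and $v'$ with a suitable traversal of $C$ would produce a cycle of length $|C| + 2$. An analogous statement holds for predecessors.

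I then apply these tools to $u$. The $a$ neighbors of $u$ lie in $B$ and generate $a$ successor $A$-vertices $X_u \subseteq V(C) \cap A$; no two elements of $X_u$ share an off-cycle neighbor, so every vertex of $B \setminus V(C)$ is adjacent to at most one element of $X_u$. Symmetrically, $u'$ produces a set $Y_{u'} \subseteq V(C) \cap B$ of $b$ successors with the analogous restriction. Tracking the degrees of vertices in $X_u$ and $Y_{u'}$, accounting for on-cycle versus off-cycle neighbors, and combining with $2$-connectivity constraints should force $\ell \ge a+b-1$; working only with $X_u$ together with its reflection (predecessor instead of successor) should yield $\ell \ge 2a-2$.

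The hard part will be the case where some neighbors of $u$ lie off $C$, so that $u$ belongs to a nontrivial component $H$ of $G - V(C)$. Then the ears have length larger than $2$, the spacing constraints on the attachment set $S_H = N(H) \cap V(C)$ become stronger but depend on the internal structure of $H$, and parity must be tracked carefully throughout the bipartite accounting. Finally, the sharpening to $2a-1$ under the symmetric hypothesis $a=b$ and $|A|=|B|$ should come from a parity observation: in this regime $|A \setminus V(C)| = |B \setminus V(C)|$, which rules out one tight configuration of the general argument and gains an extra unit in the cycle bound.
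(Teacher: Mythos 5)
This statement is quoted from Jackson's 1985 paper and is not proved in the paper at all --- it is used as a black box (via the incidence bipartite graph) to derive Corollary~\ref{jackcor} --- so there is no in-paper argument to compare against. Judged on its own terms, your proposal is an outline rather than a proof, and the gaps are exactly at the places where Jackson's theorem is genuinely hard.

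Concretely: the longest-cycle setup, the ear lemma, and the bipartite rotation lemma (successors of the neighbours of $u\in A\setminus V(C)$ have no common off-cycle neighbour, on pain of a cycle of length $|C|+2$) are all correct and standard. But the two conclusions you actually need, $\ell\ge a+b-1$ and $\ell\ge 2a-2$, are never derived --- you write that tracking degrees of $X_u$ and $Y_{u'}$ ``should force'' them, without exhibiting the count. This is not a routine omission: one must combine the spacing constraints coming from $u$'s attachment set with those coming from $u'$'s, and the two sets live in different sides of the bipartition and interleave on $C$ in ways that require a careful case analysis (this interleaving is where the three quantities $|B|$, $a+b-1$, $2a-2$ separate). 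Moreover, your entire accounting presupposes that all $a$ neighbours of $u$ (and all $b$ neighbours of $u'$) lie on $C$; you explicitly flag the contrary case --- a nontrivial component $H$ of $G-V(C)$ --- as ``the hard part'' and leave it open, yet this is the case where $2$-connectivity must be invoked through the attachment set $N(H)\cap V(C)$ and where the arc-length lower bounds depend on the internal structure of $H$. Finally, the sharpened bound $2\min\{|B|,2a-1\}$ when $a=b$ and $|A|=|B|$ is justified only by the remark that a parity observation ``should'' rule out one tight configuration; no such configuration is identified. As it stands the proposal is a plausible plan whose execution would essentially reproduce Jackson's (long) original argument, but none of the load-bearing steps have been carried out.
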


A sharpness example for Theorem~\ref{jack} is a graph $G_3=G_3(a,b,a',b')$ for $a'\geq b'\geq a+b-1$ obtained from
disjoint complete bipartite graphs $K_{a'-b,a}$ and $K_{b,b'-a}$ by joining each vertex in the $a$ part of $K_{a'-b,a}$ to
each vertex in the $b$ part of $K_{b,b'-a}$.


\subsection{Terminology and known results for uniform hypergraphs}

We consider the notion of {\em Berge cycles}.

\begin{definition}
A {\bf Berge cycle of} length $c$ in a hypergraph is an alternating list of $c$ distinct vertices and $c$ distinct edges $C=v_1, e_1, v_2, \ldots,e_{c-1}, v_c, e_c, v_1$ such that $\{v_i, v_{i+1}\} \subseteq e_i$ for all $1\leq i \leq c$ (we always take indices of cycles of length $c$ modulo $c$).
We call vertices $v_1, \ldots, v_c$ {\bf the defining vertices} of $C$ and write
$V(C)=\{v_1, \ldots, v_c\}$, $E(C) = \{e_1, \ldots, e_c\}$. 
\end{definition}

Notation for  Berge paths is similar. In addition, a {\bf partial Berge path} is an alternating sequence of distinct  edges and vertices beginning with an edge and ending with a vertex $e_0, v_1, e_1, v_2, \ldots, e_k, v_{k+1}$ such that $v_1 \in e_0$ and for all $1\leq i \leq k$, $\{v_i, v_{i+1}\} \subseteq e_i$.

A series of approximations and analogs of Theorem~\ref{dirac} for Berge cycles in a number of classes of $r$-uniform hypergraphs ({\em $r$-graphs}, for short) were obtained by
Bermond,  Germa,  Heydemann and Sotteau~\cite{BGHS}, Clemens, Ehrenm\"uller and Person~\cite{CEP},
Coulson and Perarnau~\cite{CP} and Ma, Hou, and Gao~\cite{MHG}.

Exact bounds for all values of $3\leq r<n$ were obtained in~\cite{KLM}.

\begin{thm}[Theorem 1.7 in~\cite{KLM}]\label{mainold2}
Let $t=t(n)=\lfloor \frac{n-1}{2} \rfloor$, and suppose $3 \leq  r <n$. Let $H$ be an $r$-graph. If
\hspace{1mm}
(a) $r\leq t$ and $\delta(H) \geq  {t \choose r-1} +1$ or 
\hspace{0.5mm}
(b) $r\geq n/2$ and $\delta(H) \geq r$, 
 \hspace{0.5mm}
 then $H$ contains a hamiltonian Berge cycle. 
\end{thm}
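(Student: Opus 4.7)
The plan is to reduce both cases to Dirac's theorem on graphs by passing to the 2-shadow $G := \partial_2 H$, the graph on $V(H)$ in which $\{u,v\}$ is an edge whenever some $e \in E(H)$ contains $\{u,v\}$; find a hamiltonian cycle in $G$; and then lift it to a Berge cycle of $H$ by matching the $n$ consecutive pairs of the cycle to distinct edges of $H$.

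For case (a), the link $L_v := \{e \setminus \{v\} : v \in e \in E(H)\}$ of each vertex $v$ is an $(r-1)$-uniform hypergraph on $V(H) \setminus \{v\}$ with at least $\binom{t}{r-1}+1$ edges. Its support must therefore span at least $t+1$ vertices, since otherwise all its edges would lie among the at most $\binom{t}{r-1}$ possible $(r-1)$-subsets of a $t$-set. Hence $d_G(v) \geq t+1 \geq n/2$, and Theorem~\ref{dirac} produces a hamiltonian cycle $C = v_1 v_2 \cdots v_n v_1$ in $G$. To lift $C$, I would consider the bipartite ``demand graph'' $B$ between the pairs $\{P_i := \{v_i,v_{i+1}\} : 1 \leq i \leq n\}$ and $E(H)$, with $P_i \sim e$ iff $P_i \subseteq e$; a perfect matching saturating the $P_i$'s yields a hamiltonian Berge cycle along $C$. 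Verifying Hall's condition is the crux: if some $S \subseteq \{P_i\}$ were covered by fewer than $|S|$ edges of $H$, then a long contiguous arc of $C$ would sit inside very few edges, and I would try to re-route that arc---using the extra flexibility in the shadow $G$---to obtain a hamiltonian cycle on which Hall succeeds.

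For case (b), where edges are very large ($r \geq n/2$), the 2-shadow is nearly complete, and the task becomes choosing $n$ distinct edges compatible with some cyclic ordering of $V(H)$. A greedy construction should suffice: iteratively extend a Berge path by an unused edge containing the current endpoint together with a yet-unused vertex, using $\delta(H) \geq r$ to guarantee many candidate edges at each step, and finally close the cycle.

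The \emph{main obstacle} is the Hall-condition step in case (a): one must rule out ``clumpy'' configurations in which long arcs of $C$ are covered by too few edges of $H$ (for instance, when one edge of $H$ contains many consecutive vertices of $C$). I expect this to be handled by a combination of Pósa-type rotations in $G$ that replace $C$ with a more cooperative hamiltonian cycle, and a direct structural analysis exploiting the ``$+1$'' slack in the degree hypothesis, which ensures the link of every vertex is strictly larger than any $t$-vertex configuration and hence provides an escape route whenever Hall threatens to fail.
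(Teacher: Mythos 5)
First, note that the paper you were given does not prove Theorem~\ref{mainold2} at all: it is quoted verbatim as Theorem~1.7 of~\cite{KLM}, so there is no in-paper argument to compare against. Judged on its own, your proposal has a genuine gap at exactly the point you yourself flag as ``the crux.'' The preliminary reduction is fine: since the link of each vertex $v$ consists of at least $\binom{t}{r-1}+1$ distinct $(r-1)$-sets, the shadow degree satisfies $d_G(v)\geq t+1\geq n/2$, and Dirac gives a hamiltonian cycle $C$ in the $2$-shadow $G$. But the passage from a hamiltonian cycle in the shadow to a hamiltonian \emph{Berge} cycle is the entire content of the theorem, and you do not supply it. Hall's condition for your demand graph can genuinely fail for a given $C$: a single hyperedge $e$ may contain many consecutive vertices of $C$, and each of the corresponding pairs $\{v_i,v_{i+1}\}$ may lie in no edge other than $e$ (the degree hypothesis bounds the number of edges through a \emph{vertex}, not through a \emph{pair}, so a pair can easily have codegree $1$). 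Saying that one would ``re-route'' the arc or apply ``P\'osa-type rotations'' until Hall succeeds is a statement of intent, not an argument; one must show that \emph{some} cyclic ordering admits a system of distinct representatives, and nothing in the proposal rules out the possibility that every hamiltonian cycle of $G$ is clumpy. This is precisely why~\cite{KLM} (and the present paper, for its own theorem) works instead with a longest Berge cycle and an extremal lollipop structure rather than with shadows.

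Case (b) has the same character of gap. With $r\geq n/2$ and $\delta(H)\geq r$ one only knows $|E(H)|\geq n$ by double counting, and your greedy extension can stall: the current endpoint lies in at least $r$ edges, but $r$ can be as small as $n/2$ while up to $n-1$ edges have already been consumed, so there need not be an unused edge through the endpoint, let alone one containing an unused vertex; the final closing step (finding an $n$-th unused edge containing both endpoints) is also unaddressed. Both cases therefore require a substantive additional argument beyond what is written.
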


Salia~\cite{SN} proved an exact result of P\' osa type extending  Theorem~\ref{mainold2} for $n>2r$ to  hypergraphs with ``few" vertices of small degree.
In~\cite{KLM}, some bounds on the circumference of $r$-graphs with given minimum degree were obtained:

\begin{thm}[\cite{KLM}]\label{oldmain} Let $n, k,$ and $r$ be positive integers such that $n \geq k$ and $\lfloor(n-1)/2 \rfloor  \geq r \geq 3$. Let $H$ be an $n$-vertex, $r$-uniform hypergraph. If 

\vspace{-3mm}

\begin{enumerate}
\item[(a)] $k \leq r+1$ and $\delta(H) \geq k-1$, or

\vspace{-2mm}

\item[(b)] $r+2\leq k<\lfloor(n-1)/2 \rfloor +2$ and $\delta(H) \geq {k-2 \choose r-1}+1$, or

\vspace{-2mm}

\item[(c)] $k\geq \lfloor(n-1)/2 \rfloor +2$ and $\delta(H) \geq {\lfloor(n-1)/2 \rfloor  \choose r-1}  + 1$,
\end{enumerate}
\vspace{-3mm}
then $H$ contains a Berge cycle of length $k$ or longer.
\end{thm}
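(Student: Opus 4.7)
The plan is to argue by contradiction in each of the three cases (a), (b), (c), which differ in character but share the engine of counting edges through the endpoints of a longest Berge path.

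Case (c) should follow directly from Theorem \ref{mainold2}(a): when $k \geq \lfloor(n-1)/2\rfloor + 2$ and $\delta(H) \geq \binom{\lfloor(n-1)/2\rfloor}{r-1}+1$, that theorem guarantees a hamiltonian Berge cycle, whose length $n \geq k$ already meets the required lower bound. Case (a), with $k \leq r+1$ and $\delta(H) \geq k-1$, admits a direct greedy construction: a single edge already supplies $r \geq k-1$ vertices, and the high minimum degree lets one splice together a Berge cycle of length $k$ while keeping its edges distinct.

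The substantive case is (b): $r+2 \leq k < \lfloor(n-1)/2\rfloor + 2$ and $\delta(H) \geq \binom{k-2}{r-1}+1$. Assume toward contradiction that $H$ has no Berge cycle of length at least $k$. Let $P = v_0, e_1, v_1, e_2, \ldots, e_p, v_p$ be a longest Berge path. By the maximality of $P$, every edge $e$ through $v_0$ that is not among $e_1,\ldots,e_p$ must have all its remaining $r-1$ vertices inside $V(P)$, for otherwise $P$ could be prepended. Since $d(v_0) \geq \binom{k-2}{r-1}+1$, a straightforward count of the available $(r-1)$-subsets of $V(P)\setminus\{v_0\}$ forces $p \geq k-1$, so $P$ has at least $k$ vertices.

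The hardest step is converting this long Berge path into a long Berge cycle. The main tool will be P\'osa-style rotations adapted to the Berge setting: whenever $v_0$ lies in an edge $e$ outside the current edge list with $e \ni v_i$ for some $i \geq 2$, one replaces $e_i$ by $e$ and reverses the prefix of $P$, obtaining a new longest Berge path with left-endpoint $v_{i-1}$. Iterating produces a set $S$ of alternative left-endpoints, each constrained by maximality to have all of its edges inside $V(P)$. The principal obstacle is bookkeeping: the same hyperedge cannot simultaneously play two roles in a Berge structure, so one must carefully track which edges are "consumed" by the current rotated path and which remain free for the eventual closure. Using that every $s \in S$ also has at least $\binom{k-2}{r-1}+1$ edges confined to $V(P)$, and weighing this against the supply of still-available edges, one finds an unused edge joining some $s \in S$ to $v_p$, closing the path into a Berge cycle of length at least $k$ and contradicting the assumption. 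The threshold $\binom{k-2}{r-1}+1$ is calibrated precisely to make this edge-budget argument tight.
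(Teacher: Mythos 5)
First, a point of reference: the paper you are reading does not prove Theorem~\ref{oldmain} at all --- it is imported verbatim from~\cite{KLM}, so there is no internal proof to compare against. Your reduction of case (c) to Theorem~\ref{mainold2}(a) is legitimate as far as this paper's logic goes (the hypothesis $\lfloor(n-1)/2\rfloor\geq r$ is exactly what part (a) of that theorem needs, and a hamiltonian Berge cycle has length $n\geq k$), though of course in~\cite{KLM} itself this would be circular unless the hamiltonicity result is proved first.

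The substantive gaps are in cases (a) and (b). Case (a) is not the triviality you suggest: for $k\leq r+1$ you need $k$ \emph{distinct edges} arranged cyclically through $k$ distinct vertices, with minimum degree only $k-1$; "a single edge supplies enough vertices" gives you vertices but not the $k$ distinct edges, and this regime is precisely where separate papers (\cite{EGMSTZ}, \cite{KL1}) had to work hard. In case (b), your counting step is already off: the edges of the path through $v_0$ must be added to the count, so maximality only gives $\binom{k-2}{r-1}+1\leq d(v_0)\leq p+\binom{p}{r-1}$, which is perfectly consistent with $p=k-2$; the "straightforward count" does not force $p\geq k-1$. More seriously, the rotation--closure step is the entire content of the theorem and you leave it at "one finds an unused edge joining some $s\in S$ to $v_p$." In the Berge setting each rotation changes the edge set of the path, so the set of edges "confined to $V(P)$" at a rotated endpoint is not stable, the released edges $e_i$ re-enter the pool, and the closing edge must avoid all $\geq k-1$ edges of the final path; asserting that the threshold $\binom{k-2}{r-1}+1$ is "calibrated precisely" to make this work is a statement of the theorem, not a proof of it. As written, the proposal establishes case (c), sketches a plausible but incomplete start for (b), and does not address (a).
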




For an analog of Theorem~\ref{dirac2}, we define connectivity of a hypergraph with the help of its {\em incidence bipartite graph}:

\begin{definition}Let $H$ be a hypergraph. The {\bf incidence  graph $I_H$ of $H$} is the bipartite graph with $V(I_H) = X \cup Y$ such that $X = V(H), Y = E(H)$ and for $x \in X, y\in Y$, $xy \in E(I_H)$ if and only if the vertex $x$ belongs to the edge $y$ in $H$.
\end{definition}

It is easy to see that if $H$ is an $r$-graph with minimum degree $\delta(H)$, then each $x \in X$ and each $y \in Y$ satisfy $d_{I_H}(x) \geq \delta(H), d_{I_H}(y) = r$. Moreover, there is a bijection between the set of Berge cycles of length $c$ in $H$ and the set of cycles of length $2c$ in $I_H$:   a Berge cycle $v_1, e_1, \ldots, v_c, e_c, v_1$ can also be viewed as a cycle in $I_H$ with the same sequence of vertices.

Using the notion of the incidence  graph, we also define connectivity in hypergraphs.

\begin{definition}
A hypergraph $H$ is {\bf $k$-connected} if its incidence  graph $I_H$ is a $k$-connected graph. 
\end{definition}

Theorem~\ref{jack} of Jackson applied to $I_H$ of a $2$-connected $r$-graph $H$ yields the following approximation of an analog of Theorem~\ref{dirac2}  for 
 $k\leq r-1$:

\begin{coro}\label{jackcor} Let $n, k, r$ be positive integers with $2 \leq k \leq r-1$. If $H$ is an  $n$-vertex $2$-connected $r$-graph $H$ with $\delta(H)\geq k+1$,
then $H$ contains a Berge cycle of length at least $\min\{2k,n,|E(H)|\}$. 
\end{coro}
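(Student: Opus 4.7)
The plan is a direct application of Jackson's Theorem (Theorem~\ref{jack}) to the incidence bipartite graph $I_H$. Since $H$ is $2$-connected, so is $I_H$; moreover, Berge cycles of length $c$ in $H$ correspond bijectively to cycles of length $2c$ in $I_H$. In the bipartition $V(I_H) = X \cup Y$ with $X = V(H)$ and $Y = E(H)$, each vertex of $X$ has degree at least $\delta(H) \geq k+1$, and each vertex of $Y$ has degree exactly $r$.

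The only real choice is which side plays the role of the larger part $A$ in Theorem~\ref{jack}, so I would split on whether $|V(H)| \geq |E(H)|$ or not. In the first case, take $a = k+1$ and $b = r$, giving a cycle in $I_H$ of length at least $2\min\{|E(H)|,\, k+r,\, 2k\}$; in the second case, take $a = r$ and $b = k+1$, giving a cycle in $I_H$ of length at least $2\min\{n,\, k+r,\, 2r-2\}$.

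The hypothesis $k \leq r-1$ kills the \emph{middle} terms in both cases: $k+r \geq 2k+1 > 2k$ and $2r-2 \geq 2k$. Thus both bounds collapse to $2\min\{2k,\, n,\, |E(H)|\}$, where $n \geq |E(H)|$ makes $n$ redundant in the first case and $n < |E(H)|$ makes $|E(H)|$ redundant in the second. Translating the resulting cycle in $I_H$ back to $H$ via the bijection produces the desired Berge cycle. I do not anticipate a real obstacle here: the corollary is essentially a dictionary-level consequence of Jackson's theorem, with the hypothesis $k \leq r-1$ chosen precisely so that the $2a-2$ and $a+b-1$ terms in Jackson's formula are both at least $2k$ and hence harmless.
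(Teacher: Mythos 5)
Your proof is correct and is exactly the argument the paper intends: the corollary is stated as a direct application of Jackson's Theorem to the incidence bipartite graph $I_H$, and your case split on $|V(H)|$ versus $|E(H)|$ (to decide which side is $A$) together with the observation that $k\le r-1$ makes the $a+b-1$ and $2a-2$ terms at least $2k$ is precisely the intended verification.
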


On the other hand, for all $3\leq k\leq r$, there are  $2$-connected $r$-graphs $H_k$ with $\delta(H_k)\geq k-2$
that do not  have Berge cycle of length at least $\min\{2k,|V(H_k)|,|E(H_k)|\}$. A series of such examples is as follows. For $m\geq 2$, let 
$V(H_k)=A_1\cup \ldots \cup A_m\cup \{x,y\}$ where $A_i=\{a_{i,1},\ldots,a_{i,r-1}\}$ for $1\leq i\leq m$, and let
$E(H_k)=E_1\cup \ldots \cup E_m$ where for each $1\leq i\leq m$ and $1\leq j\leq k-1$, $E_i=\{e_{i,1},\ldots,e_{i,k-1}\}$ and
$e_{i,j}=(A_i-a_{i,j})\cup \{x,y\} $. Each Berge cycle in $H_k$ can contain edges from at most two $E_i$s, and $|E_i|=k-1$ for all $1\leq i\leq m$.

\subsection{Our results and structure of the paper}

Our main result is the following.
\begin{thm}\label{mainthm} Let $n, k, r$ be positive integers with $3\leq r \leq k-2\leq n-2 $. If $H$ is an  $n$-vertex $2$-connected $r$-graph with
\begin{equation}\label{main}
\delta(H) \geq {k-1 \choose r-1} + 1,
\end{equation}
then $H$ contains a Berge cycle of length at least $\min\{2k,n\}$. 
\end{thm}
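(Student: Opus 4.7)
The plan is a proof by contradiction. First, I would handle the ``Hamiltonian'' case $2k\ge n$ directly via Theorem~\ref{mainold2}: when $2k\ge n$ we have $k-1\ge \lfloor (n-1)/2\rfloor$, so $\binom{k-1}{r-1}+1\ge \binom{\lfloor(n-1)/2\rfloor}{r-1}+1$, and the minimum-degree hypothesis implies either Theorem~\ref{mainold2}(a) or~\ref{mainold2}(b), giving a Hamiltonian Berge cycle. So I may assume henceforth that $n>2k$ and aim to find a Berge cycle of length at least $2k$.

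Since $\binom{k-1}{r-1}+1>\binom{k-2}{r-1}+1$ and $k<\lfloor(n-1)/2\rfloor+2$ (using $n>2k$), Theorem~\ref{oldmain}(b) yields a Berge cycle of length at least $k$ in $H$. Let $C$ be a longest Berge cycle of $H$, say of length $c$, and suppose for contradiction that $k\le c<2k$. Since $c<n$, there is a vertex $u\in V(H)\setminus V(C)$. By 2-connectivity of $I_H$, Menger's theorem yields two internally vertex-disjoint paths from $u$ to $V(C)$ in $I_H$; these correspond to two Berge paths $P_1,P_2$ in $H$ starting at $u$ and ending at distinct vertices $v_a,v_b\in V(C)$, sharing only $u$ as a vertex and using disjoint edge sets. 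Concatenating $P_1^{-1}$ with $P_2$ gives a Berge path $Q$ from $v_a$ to $v_b$ through $u$, say of length $q\ge 2$. Let $C_1, C_2$ be the two arcs of $C$ between $v_a$ and $v_b$, with lengths $c_1+c_2=c$. Since $Q\cup C_i$ is a Berge cycle for $i=1,2$, the maximality of $C$ forces $c_i+q\le c$ for each $i$, giving $q\le \min(c_1,c_2)\le c/2$, and combined with $c<2k$ this gives tight control on the structure around $v_a,v_b$.

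The core of the proof is then to exploit the minimum degree condition at vertices of $C$. For each $v_j\in V(C)\setminus\{v_a,v_b\}$, the $\binom{k-1}{r-1}+1$ edges incident to $v_j$ must, by maximality of $C$ and a Berge analog of P\'osa rotation, mostly lie inside $V(C)$; otherwise one could reroute $C$ through a vertex outside $V(C)$ and, using the structure afforded by $Q$ and $P_1\cup P_2$, obtain a longer Berge cycle. A careful tally of the $r$-subsets of $V(C)$ available to host the edges incident to a fixed vertex gives at most $\binom{c-1}{r-1}\le \binom{2k-2}{r-1}<\binom{k-1}{r-1}+1$ once the rotation argument has shown that several such subsets are unavailable (because rotating at those subsets would combine with $Q$ to produce a Berge cycle longer than $C$). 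This contradicts the lower bound on $d(v_j)$ and completes the proof.

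The main technical obstacle I expect is developing the sharp Berge-analog of P\'osa rotation tailored to this two-connected setting. Unlike in graphs, a single hyperedge can contain several cycle vertices as well as vertices of $Q$, so the bookkeeping of which edges are ``available'' for rerouting is delicate. In particular, one must carefully handle edges that lie on $Q$ but also contain cycle vertices, and edges that pass through both $u$ and a vertex of $V(C)$; the key lemma to isolate is that, under the assumption $c<2k$, the presence of $u$ and $Q$ forbids at least $\binom{c-1}{r-1}-\binom{k-1}{r-1}$ of the possible ``rotation targets'' at each interior vertex of the long arc, which is exactly what the degree bound needs.
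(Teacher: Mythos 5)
Your reductions at the start are fine: the case $2k\ge n$ does follow from Theorem~\ref{mainold2} (one checks $k-1\ge\lfloor(n-1)/2\rfloor$ and, when $r\ge n/2$, that $\binom{k-1}{r-1}+1\ge r$), and the Menger/longest-cycle setup with the bridge path $Q$ is the standard opening. But everything after that — which is the entire substance of the theorem — is asserted rather than proved. The claim that the edges incident to a cycle vertex $v_j$ ``must mostly lie inside $V(C)$'' does not follow from maximality of $C$: a vertex of $C$ can be incident to many edges reaching outside $V(C)$ without creating a longer cycle, and indeed a cycle vertex can have all $\binom{c-1}{r-1}\le\binom{2k-2}{r-1}$ of its potential neighborhoods inside $V(C)$, which vastly exceeds $\binom{k-1}{r-1}+1$. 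Your displayed chain $\binom{c-1}{r-1}\le\binom{2k-2}{r-1}<\binom{k-1}{r-1}+1$ is false as written, and the ``key lemma'' you say one should isolate (that $u$ and $Q$ forbid at least $\binom{c-1}{r-1}-\binom{k-1}{r-1}$ rotation targets at each interior vertex of the long arc) is exactly the hard content of the theorem; you give no argument for it and it is not clear it is even true in that form.

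For comparison, the paper does not apply the degree bound to cycle vertices at all. It works with a ``lollipop'' $(C,P)$ — a longest cycle together with a longest (or partial) Berge path attached to it — optimized under a five-level priority, and applies the degree hypothesis to the \emph{endpoint} $u_{\ell+1}$ of $P$ (and symmetrically to $u_2$). The point is that maximality of the lollipop genuinely confines the neighborhood of $u_{\ell+1}$ to $V(C)\cup V(P)$, with strong spacing restrictions on where its neighbors can sit on $C$ (no two consecutive, far from the attachment point, etc.), so that a counting argument via an interval/independent-set lemma (Lemma~\ref{hamp2}) caps $d(u_{\ell+1})$ below $\binom{k-1}{r-1}+1$. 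A second essential ingredient is a Dirac-type splitting lemma for paths in $2$-connected graphs (Lemma~\ref{diraclemma}) applied to the incidence graph, used to bound how many neighbors of $u_{\ell+1}$ can lie on $P$ itself. None of this machinery is present or replaced in your proposal, so the proof has a genuine gap at its core.
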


We point out that for $2$-connected hypergraphs, the minimum degree required to guarantee a Berge cycle of length at least $2k$ is roughly of the order $2^{r-1}$ times smaller than the sharp bound guaranteed in Theorem~\ref{oldmain}(b). Furthermore, the bound $\delta(H) = {k-1 \choose r-1} + 1$  is best possible as demonstrated by the following constructions.

\begin{construction}Let $q\geq 2$ be an integer and $4 \leq r+1 \leq k \leq n/2$. For $n = q(k-2)+ 2$, let $H_1 = H_1(k)$ be the $r$-graph with $V(H_1) = \{x,y\} \cup V_1 \cup V_2 \cup \ldots \cup V_q$ where  for all $1\leq i\leq q$,  $|V_i|=k-2$ and $V_i \cup \{x,y\}$ induces a clique.  Any Berge cycle in $H_1$ has length at most $ 2(k-2) + 2 = 2k-2$.
\end{construction}

\begin{construction}Let $4 \leq r+1 \leq k \leq n/2$. Let $H_2 = H_2(k)$ be the $r$-graph with $V(H_2) = X \cup Y$ where $|X| = k-1$, $|Y| = n-(k-1)$, and $E(H_2)$ is the set of all hyperedges containing at most one vertex in $Y$. No Berge cycle can contain consecutive vertices in $Y$, so any Berge cycle has length at most $2k-2$. 
\end{construction}

Observe that both $H_1$ and $H_2$ have minimum degree ${k-1 \choose r-1}$. Moreover, $H_2$ is $(k-1)$-connected and can be defined for all $n\geq k$. Therefore, the bound in Theorem~\ref{mainthm} cannot be further decreased by requiring higher connectivity.

\medskip
{\bf Remark 1}.
Problems on  conditions for the existence of long Berge paths and cycles (in particular, Tur\'an-type analogs of the Erd\H os-Gallai Theorem) attracted recently considerable attention, see e.g.~\cite{EGMSTZ,FKL,GKL, GLSZ,GSZ,FKL2conn} and references in them. These results yield some Dirac-type bounds, but the implied bounds are significantly weaker than
the bound in Theorem~\ref{mainthm}.

\medskip
{\bf Remark 2}. The extremal hypergraph of the Tur\'an-type problem in~\cite{FKL2conn} (the maximum number of edges among all $r$-uniform, $2$-connected hypergraphs with no Berge cycle of length at least $\min\{2k, n\}$) contains $H_2$ as a subhypergraph. But the extremal hypergraph $H_1$ has fewer edges.

We also present a bound for $k=2$ that is better than given by Corollary~\ref{jackcor}:

\begin{prop}\label{k=2!} Let $3\leq r<n$ be positive integers. Then every  $n$-vertex $2$-connected $r$-graph  $H$ contains a Berge cycle of length at least $\min\{4,|E(H)|\}$. 
\end{prop}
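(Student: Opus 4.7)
The plan is to work in the incidence bipartite graph $I_H$, which is $2$-connected by assumption, and to find a cycle of length at least $2\min\{4, n, |E(H)|\}$ (Berge cycles of length $\ell$ in $H$ correspond bijectively to cycles of length $2\ell$ in $I_H$). Writing $X = V(H)$ and $Y = E(H)$, I note that $\delta_{I_H}(y) = r \ge 3$ for all $y \in Y$, $\delta_{I_H}(x) \ge 2$ for all $x \in X$ (else removing $x$'s unique edge-neighbor would isolate $x$), $|X| = n \ge r + 1 \ge 4$, and $|Y| \ge 3$ (if $|E(H)| \le 2$, the two distinct $r$-sets must differ on some vertex, which becomes isolated after deleting one edge-vertex from $I_H$). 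So $\min\{4, n, |E(H)|\} \in \{3, 4\}$.

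For $|E(H)| = 3$, $2$-connectivity of $I_H$ forces every pair of edges to share a vertex (so that the other two stay connected after deleting the third edge-vertex) and every vertex of $H$ to lie in at least two of the three edges. Writing $T = e_1 \cap e_2 \cap e_3$ and $A_{ij} = (e_i \cap e_j) \setminus e_k$ for $\{i,j,k\} = \{1,2,3\}$, a counting argument using $|e_i| = r$ shows that the emptiness of any single $A_{ij}$ forces $T = e_1 = e_2 = e_3$, contradicting distinctness. Hence all three $A_{ij}$ are nonempty and pairwise disjoint, and choosing $u_{ij} \in A_{ij}$ produces the Berge $3$-cycle $u_{12}, e_2, u_{23}, e_3, u_{13}, e_1, u_{12}$.

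For $|E(H)| \ge 4$ the target is a cycle of length $\ge 8$ in $I_H$. If $|Y| \ge |X|$, Jackson's Theorem (Theorem~\ref{jack}) applied with $A = Y$, $B = X$, $a = r$, $b = 2$ gives a cycle of length at least $2\min\{|X|, r+1, 2r-2\} \ge 8$, since $|X| \ge 4$ and $r \ge 3$. When $|X| > |Y|$, Jackson's bound drops to only $2\min\{|Y|, r+1, 2\} = 4$ because a degree-$2$ vertex in $X$ makes the $2a - 2$ term binding, so a separate structural argument is needed. For this, I would take a longest cycle $C$ in $I_H$, assume $|C| \le 6$ for contradiction, and apply the Fan Lemma to any $y \in Y \setminus V(C)$ (which exists since $|Y| \ge 4$) to obtain two internally disjoint $y$-to-$V(C)$ paths. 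Maximality of $|C|$ bounds the combined length of these paths plus the longer arc of $C$ between their endpoints by $|C|$, which rigidly constrains the neighbors of $y$ on $C$; a case analysis on $|C| \in \{4,6\}$ shows that these constraints either force every edge of $H$ to contain a common pair of vertices (yielding a contradiction for $r = 3$, since then every other vertex has degree $1$, and an explicit Berge $4$-cycle construction for $r \ge 4$) or produce a longer cycle outright. The main obstacle is precisely this $|X| > |Y|$, degree-$2$ case, which lies beyond Jackson's theorem and requires the delicate cycle-extension analysis outlined above.
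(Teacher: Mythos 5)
Your reduction to the targets $3$ and $4$, your complete treatment of the case $|E(H)|=3$, and your application of Jackson's theorem (Theorem~\ref{jack}) when $|E(H)|\ge n$ are all correct. But the remaining case --- $|E(H)|\ge 4$ and $n>|E(H)|$, where the vertex side of $I_H$ may contain degree-$2$ vertices --- is exactly where the difficulty and the near-extremal examples (such as $H_3(r,s)$ for $r\ge 4$) live, and there you offer only a plan, not a proof. The pivotal assertion that the Fan-lemma/longest-cycle analysis ``either forces every edge of $H$ to contain a common pair of vertices or produces a longer cycle outright'' is never established, and even its first alternative is left dangling: it is not obvious that an $r$-graph all of whose edges contain a fixed pair $\{u,v\}$ must have a Berge $4$-cycle (one still needs four distinct edges and two further vertices threaded correctly through consecutive edges), so that branch requires an argument of its own. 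As written, the heart of the proposition is missing.

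For comparison, the paper disposes of this case without any longest-cycle machinery. Assuming a counterexample, it first shows via Whitney's theorem that every pair of vertices of $H$ lies in a common edge (otherwise a cycle of $I_H$ through a pair in no common edge already gives a Berge cycle of length at least $4$). Corollary~\ref{jackcor} then forces $\delta(H)=2$; for a degree-$2$ vertex $u$ with edges $e_1,e_2$ this yields $e_1\cup e_2=V(H)$, and the coedge property produces a third edge $e_0$ containing $(e_1\setminus e_2)\cup(e_2\setminus e_1)$. A fourth edge $g$, which exists since $|E(H)|\ge 4$, then always closes a Berge $4$-cycle by a short case analysis on how $g$ meets the three parts $e_1\cap e_2$, $e_1\setminus e_2$, $e_2\setminus e_1$. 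Your route may well be completable, but the structural argument you would have to supply in the $n>|E(H)|$ case is essentially the one the paper gives, so you should either carry out your case analysis in full or adopt the coedge/degree-$2$ argument directly.
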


A sharpness example is an $r$-graph $H_3=H_3(r,s)$ with vertex set $\{v_1,v_2\}\cup \bigcup_{i=1}^s U_i$ where $|U_i|=r-1$ for $1\leq i\leq s$ and edge set is $\bigcup_{i=1}^s\{e_{i,1},e_{i,2}\}$ where $e_{i,j}=U_i\cup\{v_j\}$ for 
$1\leq i\leq s$ and $1\leq j\leq 2$. This $r$-graph is $2$-connected for $s\geq 2$ and has no Berge cycles of length more than $4$.

\medskip
 A related notion is the {\em codiameter} of a hypergraph $H$ which is
 the maximum integer $k$ such that for every two vertices $u,v \in V(H)$, $H$ contains a Berge $u,v$-path of length at least $k$. (Recall that the length of a Berge path is the number of its edges.)

In graphs, having codiameter $k$ is equivalent to the property that for any two vertices $x,y$, graph $G + xy$ has a cycle of length at least $k+1$ passing through edge $xy$. This property is well studied, see~\cite{Fan1,Fan2,enom}.
  It was proved recently in~\cite{KLM3} that the bound $\delta(H) \geq {\lfloor n/2 \rfloor \choose r-1} + 1$ guarantees the largest possible codiameter, $n-1$.

As an application of our main theorem, we prove the following Dirac-type bound.

\begin{coro}\label{kpath} Let $n, k, r$ be positive integers with $n/2 \geq k \geq r+2$ and $r \geq 3$. If $H$ is an $r$-uniform, $n$-vertex, $2$-connected hypergraph with
\[\delta(H) \geq {k-1 \choose r-1} + 1,\]
then the codiameter of $H$ is at least $k$.
\end{coro}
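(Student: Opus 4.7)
The plan is to derive Corollary~\ref{kpath} from Theorem~\ref{mainthm} via a cycle-to-path argument leveraging the $2$-connectivity of $H$. Given $u,v \in V(H)$, I would first apply Theorem~\ref{mainthm} to $H$ (whose hypotheses hold since $n \geq 2k$ and $3 \leq r \leq k-2$) to obtain a Berge cycle $C$ of length $\ell \geq \min\{2k,n\} = 2k$, and then split into cases based on how $u,v$ lie relative to $V(C)$.

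If both $u,v \in V(C)$, splitting $C$ at $u$ and $v$ yields two Berge $u,v$-arcs, the longer of length at least $\lceil \ell/2 \rceil \geq k$. If exactly one of $u,v$ lies on $C$, say $u \in V(C)$ and $v \notin V(C)$, then $2$-connectivity of $I_H$ together with the fan lemma produces two internally disjoint Berge paths $P_1, P_2$ from $v$ to distinct vertices $a_1, a_2 \in V(C)$, with internal $H$-vertices disjoint from $V(C)$ and edges disjoint from $E(C)$; concatenating the longer $u$-to-$a_1$ arc of $C$ (of length $\geq \ell/2 \geq k$) with $P_1^{-1}$ then produces a Berge $u,v$-path of length $\geq k+1$. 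For the remaining case, both $u,v \notin V(C)$, Menger's theorem in $I_H$ applied to the sets $\{u,v\}$ and the $I_H$-vertices corresponding to $V(C) \cup E(C)$ produces two internally vertex-disjoint paths, one starting at $u$ and one at $v$, which after a minor adjustment correspond to Berge paths $R_u, R_v$ ending at distinct vertices $c_u, c_v \in V(C)$ with internals avoiding $V(C) \cup E(C)$; concatenating $R_u$, the longer $c_u$-to-$c_v$ arc of $C$ (of length $\geq \ell/2 \geq k$), and $R_v^{-1}$ yields a Berge $u,v$-path of length $\geq k+2$.

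The main obstacle lies in the third case: ensuring the Menger-produced paths correspond to Berge paths ending at \emph{vertex}-vertices of $V(C)$ (rather than at $I_H$-vertices corresponding to edges of $E(C)$) while their edges remain disjoint from $E(C)$. This is handled by extending any $I_H$-path that terminates at an $I_H$-vertex representing some $e_i \in E(C)$ by one additional $I_H$-edge to a neighboring vertex of $e_i \cap V(C)$, at the cost of reusing the single edge $e_i$; the arc of $C$ used in the final concatenation is then chosen to avoid $e_i$, which is possible since $\ell \geq 2k$ provides ample slack in the arc lengths. A secondary subtlety is verifying in each case that combining sub-paths preserves edge-distinctness, which follows from the fact that Menger-type disjointness in $I_H$ translates to disjoint edge-sets in $H$.
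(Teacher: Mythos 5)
Your proposal is correct and follows essentially the same route as the paper: apply Theorem~\ref{mainthm} to get a Berge cycle of length at least $2k$, then use $2$-connectivity of the incidence graph (Menger/fan) to attach $u$ and $v$ to the cycle and route through a long arc, with the same case split on $|\{u,v\}\cap V(C)|$. The only difference is cosmetic: where your Menger path in $I_H$ terminates at an edge of $C$, you extend it one step reusing that edge and choose the arc to avoid it, whereas the paper absorbs this via its notion of partial Berge paths and Claim~\ref{longQ}.
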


For $n = q(k-2) + 2$, the construction $H_1(k)$ shows that Corollary~\ref{kpath} is sharp: the longest Berge path from $x$ to $y$ contains $k-1$ edges. We also note that $2$-connectivity is necessary: for large $n$ divisible by $r$, we may take $r$ copies of $K_{n/r}^{(r)}$ and a single edge intersecting each clique in one vertex. This hypergraph has minimum degree ${n/r - 1 \choose r-1}$ (which does not depend on $k$) but there are pairs of vertices that are connected  only by a one-edge  Berge path.

\subsection{Outline of the paper}

The structure of the paper is as follows. In Section~\ref{shortproofs} we present a simple proof of Proposition~\ref{k=2!} and derive Corollary~\ref{kpath} from Theorem~\ref{mainthm}.  In Section~\ref{setup!} we set up the proof of our main result,
Theorem~\ref{mainthm}. We introduce notation and define so called {\em lollipops}. Each lollipop is roughly speaking  a pair of a Berge cycle $C$ and a Berge path (or a partial Berge path, defined in the next section) $P$ such that $P$ starts in $C$ and extends outward. In particular, we define criteria for which we will choose an {\em optimal} lollipop $(C,P)$.

In the subsequent five sections we consider all possible cases of best lollipops $(C,P)$ and find a contradiction in each of them. 
In particular, in Section 4, we show that in an optimal lollipop, $P$ has a positive length. In Section 5, inspired by Dirac's proof of Theorem~\ref{dirac2}, we show that the end vertex of the $P$ cannot have too many neighbors in $P$. One of the key ingredients of the proof is a modification of a Dirac's lemma on paths in $2$-connected graphs (Lemma~\ref{diraclemma}).
In Sections 6 and 7, we show that $P$ must be a Berge path and cannot be too long. Finally in Section 8, using the structure of $(C,P)$ established in previous sections, we analyze how the neighborhoods of two vertices in $P$ can interact and conclude that we must be able to construct a longer cycle than $C$.

We note that if $k \geq n/2$ then by Theorem~\ref{oldmain}, $\delta(H) \geq {k-1 \choose r-1}+1 \geq {\lfloor (n-1)/2 \rfloor \choose r-1} + 1$ implies that $H$ contains a Berge cycle of length $n$. Thus when proving Theorem~\ref{mainthm} we will assume 
\begin{equation}\label{kn/2}
\mbox{\em $k < n/2$ and $\min\{2k, n\} = 2k$. }
\end{equation}

\section{Short proofs}\label{shortproofs}

In this section, we present a proof of Proposition~\ref{k=2!} and show how to derive Corollary~\ref{kpath}
from our main result.

\subsection{ Proof of Proposition~\ref{k=2!}}

\begin{proof}
  Suppose $H$ is a counter-example to the proposition, i.e. for some $3\leq r<n$, $H$ is $n$-vertex $2$-connected $r$-graph and each Berge cycle in $H$ has length at most $\min\{3,|E(H)|-1\}$. Since $H$   is 2-connected, by Corollary~\ref{jackcor}, $\delta(H)\leq 2$, so $\delta(H)= 2$.
  
 Assume first, a pair $\{u_1,v_1\}$ of vertices in $H$ is not in any edge. Since the incidence graph $I_H$ of $H$   is 2-connected, by Menger's Theorem it has a cycle
 $C=u_1,e_1,u_2,e_2,\ldots,u_s,e_s,v_1,f_1,v_2,\ldots,v_t,f_t,u_1$
   containing $u_1$ and $v_1$. Since no edge contains both $u_1$ and $v_1$, the four edges $e_1,e_s,f_1,f_t$ of $H$ are distinct. Then $C$ 
   corresponds to a cycle in $H$ of length at least $4$, a contradiction. Thus,
 \begin{equation}\label{nopair}\mbox{\em for each pair $\{u,v\}\subset V(H)$ there is an edge $e_{uv}$ containing $u$ and $v$.} \end{equation}
 
 Since $\delta(H) = 2$, let $u\in V(H)$ with $d_H(u)=2$ and $e_1,e_2$ be the edges containing $u$.  Let $A_0=e_1\cap e_2$, $A_1=e_2\setminus e_1$ and $A_2=e_1\setminus e_2$.
 
 By~\eqref{nopair}, $e_1\cup e_2=V(H)$. We claim that
 \begin{equation}\label{nopair2}\mbox{\em some edge $e_0$ of $H$ contains $A_1\cup A_2$.} \end{equation}
 Indeed, let $x_1\in A_2$ and $x_2\in A_1$. By~\eqref{nopair}, there is an edge $e_3$ containing $x_1$ and $x_2$.
 If $e_3$ omits some $y_1\in A_2$ and some $y_2\in A_1$, then again by~\eqref{nopair}, there is an edge $e_4$ containing $y_1$ and $y_2$, and so $H$ has $4$-cycle $x_1,e_1,y_1,e_4,y_2,e_2,x_2,e_3,x_1$, a contradiction.
 Thus we may assume $e_3\supset A_1$ and $y_1\in A_2\setminus e_3$. Since $|A_2|=|A_1|$, there is $y_2\in A_1-x_2$.
 Again by~\eqref{nopair}, there is an edge $e_4$ containing $y_1$ and $y_2$, and so $H$ has $4$-cycle $x_1,e_1,y_1,e_4,y_2,e_2,x_2,e_3,x_1$. This proves~\eqref{nopair2}.
 
 So, for $0\leq i\leq 2$, $e_i\supseteq V(H)\setminus A_i$. Since $|E(H)|\geq 4$, there is an edge $g\notin \{e_0,e_1,e_2\}$. If some two vertices of $g$ are in the same $A_i$, say $u,v\in g\cap A_0$, then $H$  has $4$-cycle $u,g,v,e_1,x_1,e_0,x_2,e_2,u$, where $x_1\in A_2$ and $x_2\in A_1$. Otherwise, $r=3$ and $g$ has a vertex in each of $A_0,A_1,A_2$.
 Since $|V(H)|\geq 4$, some $A_i$ has at least two vertices, say $|A_1|\geq 2$. For $0\leq i\leq 2$, let $u_i\in g\cap A_i$.
 Let $v\in A_1-u_1$. Then $H$  has $4$-cycle $u_0,g,u_1,e_2,v,e_0,u_2,e_1,u_0$. This contradiction finishes the proof.
\end{proof}

\subsection{Proof of Corollary~\ref{kpath} on  codiameters}

\begin{proof}
Suppose 
\begin{equation}\label{kr}
 n/2 \geq k \geq r+2 \geq 5,
 \end{equation}
and $H$ is an $r$-uniform, $n$-vertex, $2$-connected hypergraph with $\delta(H) \geq {k-1 \choose r-1} + 1$. Then by Theorem~\ref{mainthm}, $H$ contains a cycle $C= v_1, e_1, \ldots, v_c, e_c, v_1$ with $c \geq 2k$. 
Fix $u, v \in V(H)$. If $u,v \in V(C)$ then there  exists a segment in $C$ from $u$ to $v$ with at least $\lceil (c+2)/2 \rceil \geq k+1$ vertices. This is a path of length at least $k$. 

Otherwise, consider the incidence graph $I_H$ which is 2-connected. There exist shortest disjoint (graph) paths $P_1$ and $P_2$ in $I_H$ from $V(C) \cup E(C)$ to $\{u, v\}$, say $u \in P_1, v\in P_2$. If $u \in V(C)$, then we have $P_1 = u$ and similar for $v$. In $H$, $P_1$ and $P_2$ correspond to either Berge paths or partial Berge paths that end with $u$ and $v$ respectively. Let $a_1, a_2$ be the first elements of $P_1$ and $P_2$ respectively, and let $Q$ be the longer of the two $a_1, a_2$-segments along $C$. If the two  $a_1, a_2$-segments along $C$ have equal length, we choose one arbitrarily.

If without loss of generality, $u \in V(C)$, then $|V(Q)| \geq \lceil (c+1)/2\rceil \geq k+1$. Appending $P_2$ to the end of $Q$ gives a path of length at least $k+1$ from $u$ to $v$.
Finally,  
 if $u,v \notin  V(C)$, then $|V(Q)| \geq \lceil c/2 \rceil \geq k$, so $P_1 \cup Q \cup P_2$ is a $u,v$-path with at least $k+2$ vertices.
\end{proof}

\section{Setup and simple properties of best lollipops}\label{setup!}

In this section we present some hypergraph notation and define lollipops. We also derive a series of useful properties of optimal lollipops.

\subsection{Notation and setup}

For a hypergraph $H$, and a vertex $v \in V(H)$, \[N_{H}(v) = \{u \in V(H): \text{there exists } e \in E(H) \text{ such that } \{u,v\} \subset e\}\] is the {\em $H$-neighborhood} of $v$. 
The {\em closed $H$-neighborhood} of $v$ is the set $N_H[v]=N_H(v) \cup \{v\}$.
%

When $G$ is a subhypergraph of a hypergraph $H$ and $u,v\in V(H)$,
we say that $u$ and $v$ are {\em $G$-neighbors} if there exists an edge $e \in E(G)$ containing both $u$ and $v$.

 When we speak of an $x,y$-(Berge) path $P$ and
$a,b\in V(P)$, then  $P[a,b]$ denotes the unique segment of $P$ from $a$ to $b$.

Let $r\geq 3$. We consider a counter-example $H$. Taking into account~\eqref{kn/2}, $k<n/2$ and
 $H$ is a 2-connected $n$-vertex $r$-uniform hypergraph satisfying~\eqref{main} such that
 \begin{equation}\label{2kn}
 \mbox{\em $H$ does not contain a Berge cycle of length at least    $2k$.}
 \end{equation}

A {\em lollipop $(C,P)$} is a pair where $C$ is a Berge cycle and $P$ is a Berge path or a partial Berge path that satisfies one of the following:

 \begin{enumerate}
 \item[---] $P$ is a Berge path starting with a vertex in $C$, $|V(C) \cap V(P)| = 1$, and $|E(C) \cap E(P)| = 0$. We call such a pair $(C,P)$ an {\bf ordinary lollipop} (or o-lollipop for short). See Fig.~\ref{pics} (left).
 
 \item[---] $P$ is a partial Berge path starting with an edge in $C$, $|V(C) \cap V(P)| = 0$, and $|E(C) \cap E(P)| = 1$. We call such a pair $(C,P)$ a {\bf partial lollipop} (or p-lollipop for short). See Fig.~\ref{pics} (right).
 \end{enumerate}
 \begin{figure}
 \centering
 \includegraphics[scale=.95]{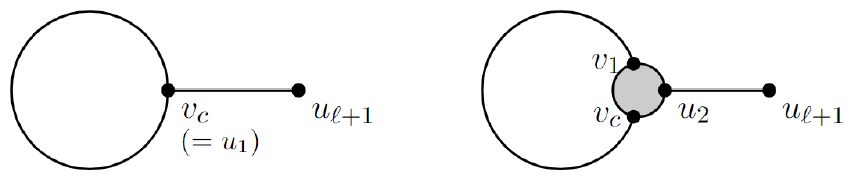}
 \caption{An $o$-lollipop and a $p$-lollipop.}
 \label{pics}
 \end{figure}
 
 A lollipop  $(C,P)$  {\em is better} than a lollipop   $(C',P')$  if
  \begin{enumerate}
\item[(a)]  $|V(C)|>|V(C')|$, or 
\item[(b)] Rule (a) does not distinguish $(C,P)$  from   $(C',P')$, and $|E(P)|>|E(P')|$; or

\item[(c)] Rules (a) and (b) do not distinguish $(C,P)$  from   $(C',P')$, and the total number of vertices of $V(P)-V(C)$ contained in the  edges of $C$ counted with multiplicities is larger than the total number of vertices of $V(P')-V(C')$ contained in the  edges of $C'$; or 
\item[(d)] Rules (a)--(c) do not distinguish $(C,P)$ from $(C',P')$, and $(C,P)$ is an o-lollipop while $(C',P')$ is a p-lollipop; or
\item[(e)] Rules (a)--(d) do not distinguish $(C,P)$  from   $(C',P')$, and
 the number of edges in $E(P)-E(C)$ fully contained in $V(P)-V(C)$ is larger than the number of edges in $E(P')-E(C')$ fully contained in $V(P')-V(C')$. \end{enumerate}

The criteria (a)--(e) define a partial ordering on the (finite) set of lollipops, and hence we can choose a best lollipop $(C,P)$. Say $C = v_1, e_1, \ldots, v_c, e_c, v_1$. If $(C,P)$ is a o-lollipop then let $P = u_1, f_1, \ldots, f_\ell, u_{\ell+1}$, where $u_1 = v_c$. If $(C,P)$ is a p-lollipop then let $P = f_1, u_2, f_2, \ldots, f_\ell, u_{\ell+1}$ where $f_1 = e_c$. With this notation, we have $|E(P)| = \ell$, $|V(P)| = \ell+1$ if $P$ is a Berge path, and $|V(P)|=\ell$ if $P$ is a partial Berge path. Assume $c <2k = \min\{2k,n\}$.

Denote by $H'$ the subhypergraph of $H$ with $V(H') = V(H)$ and $E(H') = E(H) - E(C) - E(P)$. Define 
\begin{equation}\label{H''}
H''=\left\{\begin{array}{ll} H'&\mbox{ when $(C,P)$ is a p-lollipop},\\
H'\cup \{f_1\}&\mbox{ when $(C,P)$ is a  o-lollipop.}
\end{array}\right.
\end{equation}

Since we consider mostly Berge paths and cycles, from now on we will refer to them simply as paths and cycles. We will differentiate graph paths and cycles when needed.

\subsection{Simple properties of best lollipops}
In this subsection we consider best lollipops $(C,P)$  and prove some basic claims to be used throughout the rest of the paper.
The following claim  immediately follows from the assumption~\eqref{2kn}  and $c<2k$.
\begin{claim}\label{longQ} 
\begin{enumerate}
\item[(a)] If $a_1 = e_i$ and $b_1 = e_j$ for some $i, j  \in [c]$, then the longer of the two subpaths of $C$ connecting 
$\{v_{i},v_{i+1}\}$ with $\{v_{j}, v_{j+1}\}$ and using neither of
 $e_i$ and  $e_j$  has at least $\lceil c/2 \rceil $ vertices. In particular, this path omits at most $k-1$ vertices in $C$. 

\item[(b)] If $a_1 = e_i$ and $b_1 = v_j$ for some $i, j  \in [c]$, then the longer of the two subpaths of $C$ connecting 
$\{v_{i},v_{i+1}\}$ with $v_{j}$ and not using 
 $e_i$  has at least $\lceil (c+1)/2 \rceil $ vertices. In particular, this path omits at most $k-1$ vertices in $C$.

\item[(c)] If $a_1 = v_i$ and $b_1 = v_j$ for some $i, j  \in [c]$, then the longer of the two subpaths of $C$ connecting 
$v_{i}$ with $v_{j}$   has at least $\lceil (c+2)/2 \rceil $ vertices. In particular, this path omits at most $k-2$ vertices in $C$. 
\end{enumerate}
\end{claim}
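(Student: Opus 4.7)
My plan is to derive all three parts from direct pigeonhole on the cycle $C$, using only the hypothesis $c<2k$ and the fact that $|V(C)|=|E(C)|=c$. In each case the two subpaths of $C$ described in the statement are exactly what one obtains by deleting the edges among $\{a_1,b_1\}$ that lie in $E(C)$ and ``cutting'' $C$ at the vertices among $\{a_1,b_1\}$ that lie in $V(C)$; their vertex sets together cover $V(C)$, with overlap only at the shared endpoints. So the total vertex count with multiplicity is $c$, $c+1$, or $c+2$, according as zero, one, or two of $a_1,b_1$ are vertices of $C$, and the promised lower bound on the longer subpath is essentially half of this total.

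For part (a), I would remove the two distinct edges $e_i$ and $e_j$ from $C$; what remains is a vertex-disjoint union of two paths $Q_1,Q_2$, each avoiding $e_i$ and $e_j$, whose vertex sets partition $V(C)$. Thus $|V(Q_1)|+|V(Q_2)|=c$, the longer has at least $\lceil c/2\rceil$ vertices, and the shorter has at most $\lfloor c/2\rfloor$. Since $c\le 2k-1$, we get $\lfloor c/2\rfloor\le k-1$, which bounds the number of vertices of $C$ omitted by the longer subpath.

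For part (b), deleting $e_i$ alone leaves a single path $C-e_i$, which I cut at the internal vertex $v_j$ to produce two subpaths sharing exactly $v_j$. Hence $|V(Q_1)|+|V(Q_2)|=c+1$, the longer contains at least $\lceil(c+1)/2\rceil$ vertices, and it omits at most $c-\lceil(c+1)/2\rceil=\lfloor(c-1)/2\rfloor\le k-1$ vertices of $C$. For part (c), the two $v_i,v_j$-subpaths of $C$ share both endpoints, so $|V(Q_1)|+|V(Q_2)|=c+2$; the longer then contains at least $\lceil(c+2)/2\rceil=\lceil c/2\rceil+1$ vertices, and omits at most $\lfloor c/2\rfloor-1\le k-2$ of them.

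No real obstacle is anticipated here: the claim is a purely bookkeeping lemma about a cycle of length $c$, independent of the hypergraph structure or of the particular properties of a best lollipop, and its entire content is contained in the arithmetic above. Its role in the paper is to serve as a convenient reference in the later case analysis, where we will repeatedly argue that a ``rerouted'' cycle obtained by splicing a short detour through $P$ into $C$ between two marked positions can capture all but at most $k-1$ or $k-2$ of the original vertices, thereby enabling the sought Berge cycle of length at least $2k$.
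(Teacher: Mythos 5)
Your proof is correct and is exactly the counting argument the paper has in mind — the paper states that the claim ``immediately follows from the fact that $c<2k$'' and omits the details, which are precisely your pigeonhole computation (total vertex count $c$, $c+1$, or $c+2$ across the two subpaths, so the longer one has at least half, and $c\le 2k-1$ converts the complementary count into $k-1$ or $k-2$). Nothing further is needed.
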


We call a path satisfying Claim~\ref{longQ} a {\em long $a_1, b_1$-segment of $C$}.

\begin{claim}\label{bigsmallcycle} Let $(C,P)$ be a best lollipop. For each $1\leq i\leq c$ and $2\leq m\leq \ell+1$, if some edge $g\notin E(C)$ contains $\{u_{m},v_i\}$, then\\
(a) neither $e_{i-1}$ nor $e_i$ intersect $V(P)-u_1$, and\\
(b) no edge in $H''$ intersects both  $V(P)-u_1$ and 
$\{v_{i-1},v_{i+1}\}$ (indices count modulo $c$).

In particular, the set $N_{H''}(V(P)-u_1)\cap V(C)$ does not contain two consecutive vertices of $C$.
\end{claim}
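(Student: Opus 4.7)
The plan is, for each of (a) and (b), to assume the conclusion fails and use the resulting extra incidence, together with the given edge $g$, to construct a Berge cycle with strictly more than $c$ vertices. Any such cycle contradicts the maximality of $|V(C)|$ among best lollipops: either its length is already $\geq \min\{2k,n\}$, contradicting our standing assumption on $H$, or, since $H$ is $2$-connected and the new cycle omits some vertex, one may attach a Berge path to it and obtain a lollipop strictly better than $(C,P)$ by rule~(a).

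For part (a), suppose $u_j \in e_i$ with $j \ge 2$; the symmetric case $u_j \in e_{i-1}$ is handled by reversing the orientation of $C$. I would graft $g$ and a portion of $P$ onto $C$ while using $e_i$ to join $u_j$ to $v_{i+1}$ rather than $v_i$ to $v_{i+1}$, obtaining
\[
 v_i,\; g,\; u_m,\; \ldots,\; u_j,\; e_i,\; v_{i+1},\; e_{i+1},\; \ldots,\; v_c,\; e_c,\; v_1,\; \ldots,\; v_{i-1},\; e_{i-1},\; v_i .
\]
The middle $P$-segment walks backward via $f_{m-1},\ldots,f_j$ when $j<m$, forward via $f_m,\ldots,f_{j-1}$ when $j>m$, and is empty when $j=m$. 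In all three cases the vertex set is $V(C)\cup\{u_t:\min(j,m)\le t\le\max(j,m)\}$, of size $c+|m-j|+1\ge c+1$, and the edges $g$, the chosen $f$-segment, and $e_1,\ldots,e_c$ are pairwise distinct because $g\notin E(C)$ and the $f$- and $e$-lists are internally disjoint in $P$ and $C$.

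Part (b) follows the same template with the chord $h$ playing the role of the ``redirected'' $e_i$. Assuming WLOG $v_{i+1}\in h$ and $u_{m'}\in h\cap (V(P)-u_1)$, splicing $g$ and $h$ through the $P$-segment between $u_m$ and $u_{m'}$ gives
\[
 v_i,\; g,\; u_m,\; \ldots,\; u_{m'},\; h,\; v_{i+1},\; e_{i+1},\; \ldots,\; v_{i-1},\; e_{i-1},\; v_i,
\]
of size $c+|m-m'|+1\ge c+1$; edge-distinctness uses $h\in H''$, so $h\notin E(C)$ and, apart from the possible identification with $f_1$ in the o-lollipop setting, $h\notin E(P)$. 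The case $v_{i-1}\in h$ is handled symmetrically. The ``in particular'' clause is then immediate: if two consecutive $C$-vertices $v_{i-1},v_i$ both lay in $N_{H''}(V(P)-u_1)$, the $H''$-edge witnessing $v_i\in N_{H''}(V(P)-u_1)$ would qualify as $g$ (since $H''\cap E(C)=\emptyset$), while the $H''$-edge witnessing $v_{i-1}$ would violate conclusion~(b) at that same $g$.

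The main obstacle will be edge-distinctness in degenerate situations. If $g$ itself happens to be an edge of $P$ — which is compatible with $g\notin E(C)$, except $g\neq f_1$ in the p-lollipop case — then the naive cycle would use $g$ both explicitly and as part of the $P$-segment, and the remedy is to let $g$ ``skip'' one of its $P$-endpoints, exploiting the extra vertices $g$ already contains. A parallel bookkeeping check is required in the p-lollipop setting, where $f_1=e_c$ conflates $E(P)$ with $E(C)$ and requires care when $i$ is near $c$. These corrections do not change the underlying mechanism, which is simply to use the extra incidence to graft one or more path-vertices $u_t$ into the cycle $C$.
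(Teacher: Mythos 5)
Your part (a) and the $e\neq g$ half of part (b) follow the paper's argument: graft a segment of $P$ into $C$ using the two incidences as entry and exit points, keeping all of $V(C)$ and gaining at least one $u_t$, with the same normalization trick when $g$ happens to be a path edge adjacent to $u_m$. That much is fine.

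The genuine gap is in part (b): you never consider the case $h=g$, i.e.\ the single edge $g$ contains $u_m$, $v_i$ \emph{and} $v_{i\pm1}$ (perfectly possible since $r\geq 3$). Your cycle $v_i, g, u_m,\ldots,u_{m'},h,v_{i+1},\ldots$ then uses $g$ twice, and no repair within your template works: with only one edge joining the relevant path vertex to $C$ you cannot both enter and leave the path, so in general no cycle longer than $C$ exists in this configuration (e.g.\ when $u_m=u_{\ell+1}$). Your declared list of degenerate cases ($g\in E(P)$, and $f_1=e_c$ for p-lollipops) does not include this one, and your announced contradiction mechanism --- always a strictly longer cycle, hence Rule (a) --- cannot cover it. The paper handles $e=g$ by a different device: using part (a) it knows $e_{i-1}\cap(V(P)-u_1)=\emptyset$, swaps $e_{i-1}$ for $g$ to get a cycle $C'$ of the \emph{same} length, and then wins by tie-breaking Rule (c) (the edges of $C'$ cover strictly more vertices of $V(P)-V(C)$ with multiplicity), with a further adjustment turning $P$ into a partial path when $g=f_1$. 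This exchange-plus-tie-breaking step is a missing idea in your proposal, not a bookkeeping correction, and without it the claim is not proved.
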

\begin{proof}Let $g \notin E(C)$ contain $\{u_m, v_i\}$ such that if $g \in E(P)$, say $g = f_q$, then we may assume $u_m = u_{q+1}$. 
Suppose $ e_{i-1}$ contains $u_j$ for some $2\leq j\leq \ell+1$. If either $j \geq m$ or $g \neq f_{m-1}$, then we may replace the segment $v_{i-1}, e_{i-1}, v_i$ in $C$ with $v_{i-1}, e_{i-1}, u_{j},P[u_j,u_m],u_m, g, v_{i}$. Otherwise we replace the segment with $v_{i-1}, e_{i-1}, u_j, P[u_j, u_{m-1}], u_{m-1}, g, v_i$. We obtain a longer cycle, contradicting the choice of $C$. The case with $u_{j} \in e_i$ is symmetric. This proves (a).

Suppose now some $e\in E(H'')$ contains $\{u_{j},v_{i-1}\}$ for some $2\leq j\leq \ell+1$ (the case when $e\supset \{u_{j},v_{i+1}\}$ is symmetric).
If $e\neq g$, then similarly to before we may replace the segment $v_{i-1}, e_{i-1}, v_i$ in $C$ with $v_{i-1}, e, u_{j},P[u_j,u_m],u_m, g, v_{i}$ or $v_{i-1}, e, u_j, P[u_j, u_{m-1}], u_{m-1}, g$
 to get a longer cycle.

If $e = g$, then by (a), $ e_{i-1}\cap (V(P)-u_1)=\emptyset$. Note that in this case $g \in E(H'')$. Let $C'$ be obtained from $C$ by
 replacing the edge $e_{i-1}$ with $g$. If $g\neq f_1$, then we let $P'=P$, otherwise, by the definition~\eqref{H''} of $H''$,
 $P$ is a path, and we define partial path $P'=f_1,u_2,f_2,\ldots,u_{\ell+1}$.
 Then  $(C',P')$ is better than  $(C,P)$ by Rule (c) in the definition of better lollipops. 
\end{proof}

%

Call a lollipop $(C',P')$ {\em good} if $|E(C')|=c$ and $|E(P')|=\ell$. In particular, each best lollipop is a good lollipop.

\begin{claim}\label{allneighbors}
Suppose $(C,P)$ is a good lollipop.
 Let $\wt{H}$ be the subhypergraph of $H$ with $E(\wt{H}) = E(H) - E(C) - E(P)$. 
 Then all $\wt{H}$-neighbors of $u_{\ell+1}$ are in $V(C) \cup V(P)$, and moreover

(1) if $(C,P)$ is an $o$-lollipop, then $u_{\ell + 1}$ has no $\wt{H}$-neighbors in $\{v_1, v_2, \ldots, v_{\ell}\} \cup \{v_{c-1}, v_{c-2}, \ldots, v_{c-\ell}\}$, and $u_{\ell + 1}$ is not in any edge in the set $\{e_1, e_2, \ldots, e_{\ell-1}\} \cup \{e_c, e_{c-1}, \ldots, e_{c-\ell}\}$, 

(2) if $(C,P)$ is a $p$-lollipop, then $u_{\ell+1}$ has no $\wt{H}$-neighbors in $\{v_1, v_2, \ldots, v_\ell\} \cup \{v_c, v_{c-1}, \ldots, v_{c-\ell+1}\}$, and $u_{\ell+1}$ is not in any edge in the set $\{e_1, \ldots, e_{\ell-1}\} \cup \{e_{c-1}, \ldots, e_{c-(\ell-1)}\}$.
\end{claim}

\begin{proof}Let $e \in E(\wt{H})$ contain $u_{\ell+1}$. Suppose first there is a vertex $y \in V(H) - (V(C) \cup V(P))$ such that $y \in e$. Let $P'$ be the path  obtained 
from $P$ by adding  edge $e$ and vertex $y$ to the end of $P$. Then $(C, P')$ is a lollipop with $|V(P')| > |V(P)|$, a contradiction.

Now suppose $e$ contains $v_i$ for some $i \in \{1, \ldots,\ell\}$. Then we can replace the segment $v_c, e_c, v_1, \ldots, v_i$ from $v_c$ to $v_i$ in $C$ with the path $v_c, e_c, P[e_c, u_{\ell+1}], u_{\ell+1}, e, v_i$ to obtain a cycle of length at least $c - (\ell-1) + \ell > c$, contradicting the choice of $C$. 

The proof for $i \in \{c, \ldots, c-\ell\}$ or $i \in\{c, \ldots, c-\ell+1\}$ is very similar, but when $(C,P)$ is a $p$-lollipop, we replace the segment $v_1, e_c, v_c, \ldots, e_{i}, v_i$ instead with $v_1, e_c, P[e_c, u_{\ell+1}], u_{\ell+1}, e, v_i$.

Finally suppose $(C,P)$ is an $o$-lollipop and for some $1\leq i \leq \ell$, $u_{\ell + 1} \in e_{i-1}$ (modulo $c$). The cycle obtained by replacing the segment from $v_c$ to $v_{i}$ with the path $v_c, P,u_{\ell+1}, e_{i-1}, v_{i}$ has length at least $c + 1$, contradicting the choice of $C$. The argument for $e_{c - i }$ and the argument in the case $(C,P)$ is a $p$-lollipop and $e_{i-1} \neq e_c$ are similar.
\end{proof}





\begin{claim}\label{bestp} Let $(C,P)$ be  a best lollipop.

(A) If $u_{\ell+1}\in f_m$ for some $1\leq m\leq \ell-1$ and $P'$ is obtained from $P$ by replacing the subpath 
$u_m,f_m,u_{m+1},\ldots,u_{\ell+1}$ with the subpath $u_m,f_m,u_{\ell+1},f_\ell,u_\ell,\ldots,u_{m+1}$, then $(C,P')$ also is a best lollipop.

(B) If some edge $g\in E(H')$ contains $V(P)-V(C)$ or is contained in $V(P)-V(C)$  and contains $\{u_{\ell+1},u_m\}$  for some $1\leq m\leq \ell-1$, and if $P'$ is obtained from $P$ by replacing the subpath 
$u_m,f_m,u_{m+1},\ldots,u_{\ell+1}$ with the subpath $u_m,g,u_{\ell+1},f_\ell,u_\ell,\ldots,u_{m+1}$, then $(C,P')$ also is a best lollipop.
\end{claim}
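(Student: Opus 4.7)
The strategy for both parts is to show that the new pair $(C,P')$ is a lollipop of the same type as $(C,P)$ which ties with $(C,P)$ under every one of the comparison rules (a)--(e); since $(C,P)$ is best, any tying lollipop is automatically best as well.

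For part~(A), the path $P'$ traverses exactly the same vertices and uses exactly the same edges as $P$, merely in a different order: it follows $P$ up to $u_m$, uses $f_m$ (a legitimate step because $u_{\ell+1}\in f_m$ by hypothesis) to jump to $u_{\ell+1}$, and then runs backwards along the tail of $P$ to $u_{m+1}$. Thus $V(P')=V(P)$ and $E(P')=E(P)$, and the first vertex (for an o-lollipop) or the first edge (for a p-lollipop) of $P'$ coincides with that of $P$. Consequently $|V(C)\cap V(P')|$ and $|E(C)\cap E(P')|$ are preserved, so $(C,P')$ is a lollipop of the same type as $(C,P)$; and each of the quantities compared in rules~(a)--(e) is a function only of $|V(C)|$, $|E(P')|$, $V(P')-V(C)$, $E(P')-E(C)$, and the lollipop type, all of which are identical for $(C,P)$ and $(C,P')$. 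Hence every rule ties and $(C,P')$ is best.

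For part~(B) the construction is the same reversal, except that $f_m$ is replaced by $g$, so $V(P')=V(P)$ while $E(P')=(E(P)-\{f_m\})\cup\{g\}$. Since $g\in E(H')$ we have $g\notin E(C)\cup E(P)$, so the edges of $P'$ are pairwise distinct and $E(C)\cap E(P')=E(C)\cap E(P)$; the lollipop type is preserved and rules~(a)--(d) tie exactly as in~(A), because they do not depend on which edges lie in $E(P')-E(C)$ but only on $V(P')-V(C)$, $|E(P')|$, and the lollipop type.

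The main obstacle is rule~(e), which counts edges of $E(P')-E(C)$ fully contained in $V(P)-V(C)$. The swap changes this count by $[g\subseteq V(P)-V(C)]-[f_m\subseteq V(P)-V(C)]$, and I would split on the two alternatives in the hypothesis to see that this difference is $\geq 0$. If $g\subseteq V(P)-V(C)$, the first indicator is $1$ and the change is automatically nonnegative. Otherwise $V(P)-V(C)\subsetneq g$, which forces $|V(P)-V(C)|<r$; but then no edge of size $r$ can sit inside $V(P)-V(C)$, so $f_m\not\subseteq V(P)-V(C)$ and the change is $0$. Together with the fact that $(C,P)$ is best---which rules out a strictly positive change---rule~(e) ties as well, and $(C,P')$ is a best lollipop.
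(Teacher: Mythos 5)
Your proof is correct and follows essentially the same route as the paper's: for (A) the vertex and edge sets of $P'$ are unchanged so every rule ties, and for (B) only rule (e) needs attention, where your case split (either $g\subseteq V(P)-V(C)$, or $V(P)-V(C)\subsetneq g$ forces $|V(P)-V(C)|<r$ so $f_m\not\subseteq V(P)-V(C)$) is precisely the reasoning the paper compresses into one sentence. No gaps.
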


\begin{proof} Let us check the definition of a best lollipop.
 Part (A) holds because  the vertex set and edge set of $P'-V(C)$ are the same as those of $P-V(C)$.
 
 In Part (B),  $V(P')-V(C)=V(P)-V(C)$, and $E(P')$ is obtained from  $E(P)$ by deleting $f_m$ and adding $g$. But since $g$
 contains $V(P)-V(C)$ or is contained in $V(P)-V(C)$, $(C,P)$ cannot be better than $(C,P')$.
\end{proof}

\begin{claim}\label{shortpath}For $2\leq q \leq \ell$ and $1 \leq i,j \leq c$, the following hold:

(1) If $u_q \in e_i$ and $u_{\ell+1} \in e_j$ then $j=i$ or $|j-i| \geq (\ell+1) - q + 1$.

(2) If there exists an edge $e \in E(H'')$ such that $\{v_i, u_q\} \subset e$, and if $u_{\ell+1} \in e_j$, then either $j > i$ and $j-i \geq (\ell+1) - q+1$, or $i > j$ and $i-j \geq (\ell+1)-q+2$.

(3) If there exist distinct edges $e, f \in E(H'')$ such that $\{v_i, u_q\} \subset e$ and $\{v_j, u_{\ell+1}\} \subset f$, then $j=i$ or $|j-i| \geq (\ell+1) - q + 2$.
\end{claim}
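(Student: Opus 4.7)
The plan is to prove all three parts of Claim~\ref{shortpath} by the same template: from the hypothesized incidences, construct a new Berge cycle $C^*$ in $H$, and then invoke the maximality of $|V(C)|$ in the best lollipop $(C,P)$ to conclude $|E(C^*)| \leq c$. In each part, $C^*$ consists of three pieces: the subpath $u_q, f_q, u_{q+1}, \ldots, f_\ell, u_{\ell+1}$ of $P$, which contributes $\ell - q + 1$ edges; a pair of ``connecting'' edges that anchor $u_q$ and $u_{\ell+1}$ to vertices of $C$; and the longer of the two $C$-arcs between those anchor vertices that avoids reusing either connecting edge. The bound $|E(C^*)| \leq c$ then gives the desired lower bound on $|j-i|$.

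In Part~(1), the connecting edges are $e_i$ and $e_j$ themselves. The long $C$-arc between them, avoiding both, has $c - |j-i| - 1$ edges, and the resulting cycle has
\[ |E(C^*)| = 2 + (c - |j-i| - 1) + (\ell - q + 1) = c - |j-i| + (\ell - q + 2), \]
forcing $|j-i| \geq (\ell+1) - q + 1$. In Part~(2), we replace $e_i$ by the edge $e \in E(H'')$ containing $\{v_i, u_q\}$. Since $e$ meets $C$ only at $v_i$ (not at $v_{i+1}$), the analysis becomes asymmetric in $j$ versus $i$: when $j > i$, a $C$-arc from $v_i$ to $v_{j+1}$ avoiding both $e_i$ and $e_j$ is available with $c - (j-i) - 1$ edges, giving $j - i \geq (\ell+1) - q + 1$; when $j < i$, the only sufficiently long arc (namely, $v_i$ forward through $v_{i+1}, v_{i+2}, \ldots$ to $v_j$) necessarily uses $e_i$ as an extra edge, giving $i - j \geq (\ell+1) - q + 2$. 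In Part~(3), both connecting edges $e, f \in E(H'')$ lie outside $E(C) \cup E(P)$, so the long $C$-arc from $v_i$ to $v_j$ (of $c - |j-i|$ edges) is available, yielding
\[ |E(C^*)| = 2 + (c - |j-i|) + (\ell - q + 1) = c - |j-i| + (\ell - q + 3), \]
which forces $|j-i| \geq (\ell+1) - q + 2$.

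The main point requiring verification is that $C^*$ is a legitimate Berge cycle in each case. Edge-distinctness follows because $e, f \in E(H'')$ lie outside $E(C) \cup E(P)$, and the chosen $C$-arc is designed to avoid whichever of $e_i, e_j$ is not serving as a connecting edge. Vertex-distinctness follows from $V(P) \cap V(C) \subseteq \{u_1\}$: the $P$-subpath $u_q, \ldots, u_{\ell+1}$ is disjoint from $V(C)$ whenever $q \geq 2$ (and always in the p-lollipop case, where $u_1$ does not exist), so the only delicate subcase is $q = 1$ in an o-lollipop (where $u_1 = v_c$), which is handled by routing the $C$-arc to avoid the vertex $v_c$.
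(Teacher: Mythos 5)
Your proof takes essentially the same route as the paper's: reroute $C$ through the tail $u_q, f_q, \ldots, u_{\ell+1}$ of $P$ via the two anchoring edges and compare the resulting cycle with $C$; you count edges of the whole new cycle where the paper counts vertices of the exchanged segments, but the arithmetic is equivalent and correct in all three parts. Your explicit verification of edge- and vertex-distinctness (including flagging the $q=1$ o-lollipop subtlety, which the paper's one-line proof silently ignores and which never arises in its applications since there $q\geq 2$) is, if anything, more careful than the original.
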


\begin{proof}We will prove (1). If $j\neq i$, then we can replace the segment of $C$ from $e_i$ to $e_j$ containing $|j-i|$ vertices with $e_j, u_{\ell+1}, P[u_{\ell+1}, u_q], u_q, e_i$ which contains $(\ell+1)-q + 1$ vertices. The new cycle  cannot be longer than $C$. The proofs for (2) and (3) are similar so we omit them.
\end{proof}


\section{Nontrivial paths in best lollipops}
\label{secl1}

In this section, we show that the path or partial path $P$ has length at least $2$. In particular, since $H$ is connected, and $|C|<n$, there is an edge intersecting both $V(C)$ and $V(H)-V(C)$. Thus $\ell\geq 1$.
Below we show in fact $\ell \geq 2$ using the notion of {\em expanding} sets that can be used to modify $C$ into a longer cycle.

Suppose $\ell=1$ and $u_2$ is the unique vertex in $V(P)\setminus V(C)$. Say that a set $W\subseteq V(C)$ is {\em $u_2$-expanding} 
if for every distinct $v_{j},v_{j'}\in W$, there is a $v_{j},v_{j'}$-path $Q(v_{j},v_{j'})$
whose all internal vertices are not in $V(C)\cup \{u_2\}$ and all edges are in $E(H)\setminus E(C)$. One example of a  $u_2$-expanding set is $V(C)\cap g$ where $g$ is any edge in $E(H)\setminus E(C)$. Another useful example is a set of the form $N_{H'}(w)\cap V(C)$ for a vertex $w\in V(H)-V(C)-u_2$.

Suppose $W$ is a   $u_2$-expanding set and 
$v_j, v_{j'} \in W$ where $j<j'$. If $u_2 \in e_j\cap e_{j'}$, then the cycle \[ v_1, e_1, v_2, \ldots, e_{j-1}, v_j, Q(v_{j},v_{j'}), v_{j'}, e_{j'-1}, v_{j'-1}, \ldots,  v_{j+1}, e_j, u_2, e_{j'}, v_{j'+1}, e_{j'+1}, \ldots, e_{c-1}, v_c, e_c, v_1
\]
is longer than $C$, a contradiction. A symmetric longer cycle can be found if $u_2 \in e_{j-1}\cap e_{j'-1}$. Thus 
\begin{equation}\label{no2}
\mbox{\em
$u_2$ is contained in at most one edge of $\{e_j: v_j \in W\}$ and in at most one edge of $\{e_{j-1}: v_j\in W\}$. }
\end{equation}
Therefore,
\begin{equation}\label{l1pl}
\parbox{14cm}{\em if the vertices of $W$ form on $C$ exactly $q$ intervals of consecutive vertices and $B$ is the set of edges in $C$ containing $u_2$, then   $|B|\leq c-|W|+1-q+1$. Moreover, if $q=1$, say $W=\{v_{j_1},v_{j_1+1},\ldots,v_{j_1+|W|-1}\}$ and $|B|= c-|W|+1$, then $B=E(C)\setminus \{e_{j_1},e_{j_1+1},\ldots,e_{j_1+|W|-2}\}$.}
\end{equation}


Now we are ready to prove that $\ell \geq 2$. 

\begin{lemma}\label{ell3}
Suppose   $n/2\geq k\geq r+2\geq 5$ and  $H$ is a $2$-connected $r$-graph satisfying~\eqref{main}.
Let $(C,P)$ be a best lollipop. If $c < 2k$, 
then $\ell=|E(P)|\geq 2$.
\end{lemma}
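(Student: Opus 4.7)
Suppose for contradiction that $\ell=1$, so that $u_2$ is the unique vertex of $V(P)-V(C)$; the plan is to contradict $d_H(u_2)\geq \binom{k-1}{r-1}+1$ from~\eqref{main}. Split the edges of $H$ through $u_2$ into $E_1=\{e\in E(C):u_2\in e\}$ and $E_2=\{e\in E(H)\setminus E(C):u_2\in e\}$, so $d_H(u_2)=|E_1|+|E_2|$, and bound each term above.

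\emph{Bounding $|E_2|$.} For any edge $e\in E(H)\setminus(E(C)\cup E(P))$ through $u_2$, if some vertex $w\in e$ lay outside $V(C)\cup V(P)$, then appending $(e,w)$ to the end of $P$ would yield a Berge path of length $2$, and the resulting lollipop would be strictly better than $(C,P)$ by Rule~(b), contradicting the choice of $(C,P)$. Hence $e-u_2\subseteq V(C)$, and since $e\in E(H'')$ with $u_2\in e$ we in fact have $e-u_2\subseteq A$, where $A:=N_{H''}(u_2)\cap V(C)$. By Claim~\ref{bigsmallcycle}, $A$ is an independent set on $C$, so $|A|\leq \lfloor c/2\rfloor\leq k-1$. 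Consequently $|E_2|\leq \binom{|A|}{r-1}+\varepsilon$, where $\varepsilon=1$ if $(C,P)$ is an o-lollipop (to account for $f_1$) and $\varepsilon=0$ otherwise.

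\emph{Bounding $|E_1|$.} I would apply~\eqref{l1pl} with an appropriate $u_2$-expanding set $W$. A first choice is $W=g\cap V(C)$ for an edge $g\in E_2\setminus E(P)$ with $g\subseteq V(C)\cup\{u_2\}$; this exists whenever $|A|\geq 1$ (guaranteed for o-lollipops by $f_1$, and for p-lollipops by $2$-connectivity together with $d_H(u_2)\geq 2$), gives $|W|=r-1$ with the vertices in the independent set $A$ (so $q=r-1$), and yields $|E_1|\leq c-2(r-1)+2=c-2r+4$. More refined choices are needed when $|A|$ is large: one can enlarge $W$ by concatenating several such single-edge expanding sets via an external vertex $w\in V(H)-V(C)-u_2$ (whose existence is forced by $2$-connectedness unless $n=c+1$), producing an expanding set of the form $W=N_{H'}(w)\cap V(C)$ whose $|W|+q$ strictly exceeds $2(r-1)$.

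\emph{Combining and the main obstacle.} Putting the bounds together gives $d_H(u_2)\leq (c-2r+4)+\binom{|A|}{r-1}+1$, and using $c\leq 2k-1$ I would aim to show this is strictly less than $\binom{k-1}{r-1}+1$ for every admissible value of $|A|\in\{0,1,\ldots,k-1\}$. The easy part is small $|A|$, where $\binom{|A|}{r-1}$ is tiny compared to $\binom{k-1}{r-1}$. The hard part---the main obstacle---is the boundary regime where $|A|$ is close to $k-1$ and $c$ is close to $2k-1$: here the simple $|W|=r-1$ choice yields a bound that is loose by roughly $2(k-r)$, so one must sharpen $|E_1|$ by producing $u_2$-expanding sets of size comparable to $|A|$ via a careful component analysis of $I_H-u_2-E(C)$, invoking $2$-connectivity to ensure the required bridging vertices outside $V(C)\cup\{u_2\}$ exist. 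The remaining degenerate cases, namely $|A|=0$ (only possible for p-lollipops, where all edges at $u_2$ lie on $C$) and $n=c+1$ (where $V(H)-V(C)=\{u_2\}$), would be handled by direct counting combined with Rules~(c)--(e) of the best-lollipop hierarchy.
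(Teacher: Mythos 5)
Your counting framework is essentially the paper's, but the proof has a genuine gap at exactly the point you flag as ``the main obstacle,'' and the fix you sketch cannot work. Write $a=|A|$ and $b=|E_1|$. Since by Claim~\ref{bigsmallcycle} no vertex of $A$ is adjacent on $C$ to another vertex of $A$ or to an endpoint of an edge of $B_{u_2}$, one gets the joint bound $2a+b\leq c\leq 2k-1$ (stronger than bounding $a$ and $b$ separately), hence $d_H(u_2)\leq\binom{a}{r-1}+c-2a$. For $a\leq k-2$ this is indeed strictly below $\binom{k-1}{r-1}+1$ by monotonicity. But at the boundary $a=k-1$, $c=2k-1$ the bound evaluates to exactly $\binom{k-1}{r-1}+1$: equality is achievable by the counting, so your stated aim of showing the estimate is ``strictly less than $\binom{k-1}{r-1}+1$ for every admissible $|A|$'' is false in this regime. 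Moreover, in the tight case $b=1$, so there is nothing left to gain by ``sharpening $|E_1|$'' with larger $u_2$-expanding sets --- that tool addresses the wrong term. What is actually needed (and what the paper does) is a structural argument: equality forces $B=\{e_{2k-1}\}$, $A=\{v_2,v_4,\dots,v_{2k-2}\}$, and \emph{every} $r$-subset of $A\cup\{u_2\}$ through $u_2$ to be an edge; one then picks a vertex $v_{2i+1}\notin A\cup e_1$ (which exists since $r\leq k-2$), shows its $C$-neighborhood outside $\{e_{2i},e_{2i+1}\}$ is confined to $A$, concludes from $\delta(H)\geq\binom{k-1}{r-1}+1$ that it lies in an edge reaching a vertex $w\notin V(C)$, and reroutes $C$ through $u_2$ to produce a lollipop with a longer path. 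None of this is present in, or recoverable from, your outline.

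The degenerate cases are also not dispatched as easily as you suggest. For the p-lollipop with $A=\emptyset$ (so $d_H(u_2)=|E_1|$), getting $|E_1|\leq c-r$ from~\eqref{l1pl} requires first producing enough edges of $H'$ lying inside $V(C)$; the paper does this via $d_{H'}(w)\leq 1$ for all $w\notin V(C)\cup\{u_2\}$ together with the global degree-sum count $kn\leq r|E(H)|$ and $n>2r$ (from Theorem~\ref{mainold2}). ``Direct counting combined with Rules (c)--(e)'' does not substitute for that argument. As written, the proposal establishes only the non-extremal cases and leaves the two hardest cases open.
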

\begin{proof} Suppose $\ell=1$.
If there exists $ e \in E(H) - E(C)$ containing at least 2 vertices $u, u' \notin V(C)$, then let $P'$ be a shortest path or partial path from $V(C) \cup E(C)$ to $\{u,u'\}$ which avoids $e$. Such a $P'$ exists because $H$ is 2-connected. Without loss of generality, $P'$ ends with $u$. Then $(C, (P',e,u'))$ is  better  than $(C,P)$. It follows that
\begin{equation}\label{1c}
\hbox{for each $e \in E(H) - E(C)$, $|e \cap V(C)| \geq r-1$.}
\end{equation}

{\bf Case 1:} $P$ is a path, say $P=v_c,f_1,u_2$. Recall that $H''=H'\cup \{f_1\}$.
 Since  $d_H(u_2)\geq 2$ and every edge containing $u_2$ intersects $C$,
by the maximality of $\ell$,  $N_{H''}(u_2) \subseteq V(C)$.
 Let $A=N_{H''}(u_2)$, $a=|A|$,
 $B=\{e_i\in E(C): u_2\in e_i\}$  and $b=|B|$. 
By Claim~\ref{bigsmallcycle}, $A$ does not intersect the set $\bigcup_{e_i \in B} \{v_i, v_{i+1}\}$ and
no two vertices of $A$ are consecutive on $C$.
 Therefore,
 \begin{equation}\label{vcr} 
 2k-1\geq c\geq 2a+b.
\end{equation}  
 It follows that
 \begin{equation}\label{abr} 
1+{k-1\choose r-1}\leq d_H(u_2)\leq {a\choose r-1}+b\leq {a\choose r-1}+c-2a.
\end{equation}

 \medskip
{\bf Case 1.1:}  $d_{H''}(u_2)\geq 2$. Then   $a\geq r$. 
Since the RHS of~\eqref{abr} is monotonically increasing with $a$ when $a\geq r-1\geq 2$, if $a\leq k-2$, then
$1+{k-1\choose r-1}\leq {k-2\choose r-1}+c-2k+4$ and hence
$${k-2\choose r-2}\leq c-(2k-3)\leq 2,$$
a contradiction.

Suppose now $a=k-1$. Then~\eqref{abr} yields $b\geq 1$,~\eqref{vcr} yields $b\leq 1$, and in order to have equality, $c=2k-1$ and all $r$-tuples of vertices containing $u_2$ and contained in $A\cup \{u_2\}$ are edges of $H''$. 

It is convenient in this case to rename the vertices in  $C$ so that $B=\{ e_{2k-1}\}$ and $A=\{v_2,v_4,\ldots,v_{2k-2}\}$. Since $k=a+1\geq r+1$, for every $1\leq j\leq k-1$ we can choose an edge $g_{2j}\in E(H'')$ containing $u_2$ and $v_{2j}$ so that all $g_{2j}$ are distinct. 
Since $r\leq k-2$, some vertex in $V(C)-A$ is not in $e_{2k-1}$, say $v_{2i+1}\notin e_{2k-1}$ for some $1\leq i\leq k-2$. 

Suppose $v_{2i+1}\in e_j$ for some $ j\in [2k-2]-\{{2i},{2i+1}\}$. By symmetry, we may assume  $j> 2i+1$. If $j$ is even then the cycle 
$$C_j=v_{2i+1},e_j,v_{j+1},e_{j+1},v_{j+2},\ldots,v_{2i},g_{2i},u_2,g_{j},v_{j},e_{j-1},v_{j-1},\ldots,v_{2i+1}$$
is longer than $C$. If $j$ is odd then by the choice of $v_{2i+1}$, $j\neq 2k-1$, and the cycle 
$$C'_j=v_{2i+1},e_j,v_{j},e_{j-1},v_{j-1},\ldots,v_{2i+2},g_{2i+2},u_2,g_{j+1},v_{j+1},e_{j+1},v_{j+2},\ldots,v_{2i+1}$$
is longer than $C$, a contradiction. 

Similarly, if for some odd $j\neq 2i+1$ there is an edge $h_j\in E(H'')$ containing $v_{2i+1}$ and $v_j$, then we may assume $j>2i+1$, and the cycle $C''_j$ obtained from $C'_j$ by replacing $e_j$ with $h_j$ is longer than $C$. Recalling that $A$ is the set of vertices with even indices in $C$ we obtain $N_{H-\{e_{2i},e_{2i+1}\}}(v_{2i+1})\cap V(C)\subseteq A$. Since $|A|=k-1$ and $d_H(v_{2i+1})\geq {k-1\choose r-1}+1$, some edge $h\in E(H'')\cup \{e_{2i},e_{2i+1}\}$ containing $v_{2i+1}$ contains also a vertex $w\notin V(C)$.
Since $v_{2i+1}\notin A\cup e_{2k-1}$, $w\neq u_2$.
 Consider the lollipop
$(C_1,P_1)$ where $C_1$ is obtained from $C$ by replacing the subpath $v_{2i},e_{2i},v_{2i+1},e_{2i+1},v_{2i+2}$ with
the subpath $v_{2i},g_{2i},u_2,g_{2i+2},v_{2i+2}$, and $P_1=v_{2i},e_{2i},v_{2i+1},h,w$. This lollipop satisfies $|V(C_1)|=|V(C)|$ but $|E(P_1)| > |E(P)|$, contradicting the choice of $(C,P)$.

\medskip
{\bf Case 1.2:} $d_{H''}(u_2)=1$. Then $d_H(u_2)=1+b$, $A=f_1\cap V(C)$ and $a=r-1$. By~\eqref{vcr},
  $d_H(u_2)=1+b\leq 1+c-2a=c-2r+3$. In particular,  
  \begin{equation}\label{r+2}
1+{k-1\choose 2}\leq 1+{k-1\choose r-1}\leq d_H(u_2)\leq (2k-1)-2r+3\leq 2k-4,
\end{equation}
and thus $k^2-7k+12\leq 0$.  For $k\geq r+2\geq 5$, this is impossible.

\medskip
{\bf Case 2:} $P$ is a partial path, say
  $P=e_c,u_2$. If there is an edge $h\in E(H')$ containing $u_2$, then by~\eqref{1c}, $h$ contains some $v_j\in V(C)$.
So, the lollipop. $(C,P')$ where $P'=v_j,h,u_2$ also is better than $(C,P)$ by Rule (d), a contradiction. 
  So, 
  $d_H(u_2)=b$, where $B=\{e_i\in E(C): u_2\in e_i\}$  and $b=|B|$. 

Suppose there exists $w\in V(H)-(V(C)\cup \{u_2\})$. Let us show that
  \begin{equation}\label{d(w)}
d_{H'}(w)\leq 1. 
\end{equation}
  Indeed, suppose $g_1,g_2\in E(H')$ and $w\in g_1\cap g_2$. Let $W=V(C)\cap (g_1\cup g_2)$. As observed before, this $W$ is $u_2$-expanding. Since $g_2\neq g_1$, by~\eqref{1c}, $|W|\geq r$. Also, by Claim~\ref{bigsmallcycle},
  vertices in $g_2$ could not be next to  vertices in $g_1$ on $C$. Thus if $|W|= r$, then $|g_1\cap g_2|=r-1$, and hence no two vertices  of $W$ are consecutive on $C$. In this case, by~\eqref{l1pl},
 $b\leq c-|W|+1-q+1$ where $q=|W|=r$. So, similarly to~\eqref{r+2} we get
 $$ 1+{k-1\choose 2}\leq  d_H(u_2)\leq c-2r+2\leq 2k-5,$$
 which yields $k^2-7k+14\leq 0$, an impossibility. 
 Thus $|W|\geq r+1$. But still since  vertices in $g_2$ could not be next to  vertices in $g_1$ on $C$, $q\geq 2$.
So, we again get
 $$ 1+{k-1\choose 2}\leq  d_H(u_2)\leq c-(r+1)+1-2+1\leq 2k-5,$$  
 and come to a contradiction.
 This proves~\eqref{d(w)}.

 If $n=c+x$ and $|E(H)|=c+y$, then 
  \begin{equation}\label{c+x}
  k(c+x)=k\cdot n\leq \sum\nolimits_{v\in V(H)}d_H(v)=r(c+y).
  \end{equation}
   
\medskip
If $r\geq n/2$, then by Theorem~\ref{mainold2}, $H$ has a Hamiltonian cycle; thus $n>2r$. So, we conclude from~\eqref{c+x} that
$$y\geq \frac{(r+2)(c+x)}{r}-c= \frac{2(c+x)}{r}+x=\frac{2n}{r}+x>4+x.
$$
 Since $d_{H'}(u_2)=0$ and by~\eqref{d(w)}, at most $x-1$ edges in $H'$ contain a vertex outside of $V(C)$. It follows that  
   at least $6$ edges of $H'$ are contained in $V(C)$. If at least one of these edges is not an interval of consecutive vertices on $C$,~\eqref{l1pl} yields $b\leq c-r$. Also if all of these edges form intervals on $C$, then the ``Moreover" part of~\eqref{l1pl} yields $b\leq c-r$. Hence,
$$1+{k-1\choose 2}\leq d_H(u_2)=b\leq c-r\leq 2k-4,$$
and thus $k^2-7k+12\leq 0$.  For $k\geq r+2\geq 5$, this is impossible. 
  \end{proof}

\section{Vertex $u_{\ell+1}$ has few neighbors in $P$}

In this section we show that $u_{\ell+1}$ cannot have too many $H'$-neighbors in $P$ or be contained in too many edges in $P$. In particular, we will prove that it has at most $k-2$ such neighbors, and is in at most $k-1$ such edges.  The proof is a modification of Dirac's proof of Theorem~\ref{dirac2} for 2-connected graphs. The interested reader may look at the relevant sections in~\cite{D} or~\cite{KLM2} to see the proof idea in the simpler setting of graphs.

\bigskip

Let $S_1=(N_{H'}(u_{\ell+1})\cap V(P))\cup \{u_\ell\}$, $S_2=\{u_m\in V(P): u_{\ell+1}\in f_m \mbox{ and } u_{m}\notin S_1\}$ and
 $S=S_1\cup S_2$.

We will prove a series of claims. In each claim, we construct a cycle containing almost all of $S_1 \cup S_2$ and at least half the vertices in $C$. Thus if $S_1 \cup S_2$ or $S_1$ is too large (in particular, if $|S_1 \cup S_2| \geq k$ or $|S_1| \geq k-1$), we obtain a cycle that is longer than $C$. 

We use $h'(a,b)$ to denote an edge in $H'$ containing $a$ and $b$ if we know such an edge exists.
By definition, if $u_m\in S_2$ then there is no edge in $H'$ containing $u_m$ and $u_{\ell+1}$. In this case,
$h'(u_m,u_{\ell+1})$ denotes $f_m$.

If the smallest index $i$ with $u_i\in S_1\cup S_2$ is such that $u_i\in S_2$ then we denote this index by $i_1-1$, otherwise if $u_i \in S_1$ then we denote it by $i_1$. Let the other indices $i$ such that  $u_i\in S$ be $i_2,\ldots,i_{\alpha}$ in increasing order. 

Index the vertices of $S_1$ by $j_1, j_2, \ldots, j_\beta$ in increasing order.
 

If $(C,P)$ is an $o$-lollipop, then let $X=V(C)-v_c$, otherwise let $X = V(C)$. Set $Y=\{u_{i_1+1},u_{i_1+2},\ldots,u_{\ell+1}\}$ and $Z = \{u_{j_1+1}, u_{j_1+2}, \ldots, u_{\ell+1}\}$. Observe that $Z \subseteq Y$. 


\begin{claim}\label{xyh'2} If $|S_1 \cup S_2| \geq k-1$ then no edge in $H'$ intersects both $X$ and $Y$.
\end{claim}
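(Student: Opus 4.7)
\textbf{Plan for Claim~\ref{xyh'2}.}
Assume for contradiction that some $e\in E(H')$ has $v_a\in e\cap X$ and $u_m\in e\cap Y$, so $m\ge i_1+1$. The first observation is a \emph{dimensional bound}: since $S_1\cup S_2\subseteq\{u_1,\ldots,u_\ell\}$, the hypothesis $|S_1\cup S_2|\ge k-1$ forces $\ell\ge k-1$; combined with the standing inequality $c<2k$ this yields $c\le 2\ell+1$. In the $o$-lollipop case this is precisely the inequality that makes the forbidden zone $\{v_1,\ldots,v_\ell\}\cup\{v_{c-1},\ldots,v_{c-\ell}\}$ from Claim~\ref{allneighbors}(1) already cover all of $X=V(C)\setminus\{v_c\}$, and an analogous bound covers the $p$-lollipop zone from Claim~\ref{allneighbors}(2), with the boundary case $c=2\ell+1$ handled via the additional exclusion from Claim~\ref{bigl2}.

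The overarching strategy is to promote $(C,P)$ to a best lollipop $(C,P^\star)$ sharing the same cycle $C$ but with $u_m$ as the path endpoint; then Claim~\ref{allneighbors}(1) applied to $(C,P^\star)$ forces $v_a\notin X$, a contradiction. For $m=\ell+1$ no rotation is needed: Claim~\ref{allneighbors}(1) applied to $(C,P)$ itself finishes the job. For $m=i_1+1$ a single rotation suffices: in Case~A ($u_{i_1}\in S_1$), the $H'$-edge $g\supseteq\{u_{i_1},u_{\ell+1}\}$ together with Claim~\ref{bestp}(B) produces a best lollipop with endpoint $u_{i_1+1}$; in Case~B ($u_{i_1-1}\in S_2$), Claim~\ref{bestp}(A) applied to $f_{i_1-1}$ first reaches endpoint $u_{i_1}$, after which one further elementary rotation lands at $u_{i_1+1}$. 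For larger $m\in Y$ we iterate: each $u_{i_j}\in S$ supplies a new rotation opportunity, and the hypothesis $|S_1\cup S_2|\ge k-1$ provides abundantly many partners to walk the endpoint through all of $Y$. If at any step the candidate edge $e$ coincides with a rotation edge $g$, then $g\supseteq\{u_{i_j},u_{\ell+1},v_a\}$ and one directly assembles a cycle $v_a,g,u_{\ell+1},f_\ell,\ldots,u_{i_j+1},f_{i_j},u_{i_j},\ldots,u_1,Q,v_a$ with $Q$ the long $C$-arc from $v_a$ to $v_c$; a routine length count using $c\le 2\ell+1$ shows this cycle is strictly longer than $C$, again a contradiction.

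The main obstacle is maintaining best-lollipop status along the rotation chain so that Claim~\ref{allneighbors} legitimately applies at the final step. Rules (a)--(d) of the ``better lollipop'' order are preserved trivially by every rotation (same cycle, same $|E(P)|$, same $V(P)\setminus V(C)$, same lollipop type), so the delicate point is Rule~(e) together with the hypothesis in Claim~\ref{bestp}(B) that the rotation edge either lies inside $V(P)\setminus V(C)$ or contains it. Iterating Claim~\ref{bestp} requires verifying this hypothesis at every step, leaning on the structure of $S$ and on Claim~\ref{bigsmallcycle}, which constrains the $H''$-neighborhoods of path vertices and in particular forbids two consecutive $C$-vertices from both being such neighbors. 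I expect this case bookkeeping to be the technically delicate part of the proof.
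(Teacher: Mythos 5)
Your overall strategy (rotate the path so that the relevant vertex of $Y$ becomes the endpoint, then invoke Claim~\ref{allneighbors}) is genuinely different from the paper's, and it has a gap at its core step. A Pósa-type rotation anchored at $u_{\ell+1}$ can only relocate the endpoint to a vertex of the form $u_{j+1}$ with $u_j\in S_1\cup S_2$ (or, in your Case~B, to $u_{i_1}$ and then $u_{i_1+1}$); it cannot reach an arbitrary $u_m\in Y$. To ``walk the endpoint through all of $Y$'' you would need, after each rotation, information about the $H'$-neighborhood and $f$-membership of the \emph{new} endpoint $u_{j+1}$, and the hypothesis $|S_1\cup S_2|\ge k-1$ says nothing about those vertices --- it only constrains $u_{\ell+1}$. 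So for an offending edge $e$ containing $u_m$ with $u_{m-1}\notin S_1\cup S_2$ and $m\le\ell$, your chain never produces a best (or even good) lollipop with endpoint $u_m$, and the argument does not close. A secondary problem: in the $p$-lollipop boundary case $c=2\ell+1$ the uncovered vertex is $v_{\ell+1}$ as a potential $H'$-\emph{neighbor} of $u_{\ell+1}$, and Claim~\ref{bigl2} only excludes $u_{\ell+1}$ from cycle \emph{edges} $e'_j$, so it does not plug that hole. (Your dimensional observation $\ell\ge k-1$, hence $c\le 2\ell+1$, is correct and does settle the case $m=\ell+1$ for $o$-lollipops.)

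The paper avoids rotations entirely with one explicit cycle. Among all offending edges it picks $e$ containing $u_i$ with $i$ \emph{maximal}, sets $i'$ to be the largest index below $i$ with $u_{i'}\in S_1\cup S_2$, takes a long $v_c,v_j$-segment $Q$ of $C$ (Claim~\ref{longQ}), and forms
$v_c,Q,v_j,e,u_i,f_i,\ldots,u_{\ell+1},h'(u_{\ell+1},u_{i'}),f_{i'-1},\ldots,f_1,v_c$
(or the simpler version without the jump when $i=\ell+1$). By the maximality of $i'$ the skipped block $u_{i'+1},\ldots,u_{i-1}$ contains no vertex of $S_1\cup S_2$, so the cycle picks up all $k$ vertices of $S_1\cup S_2\cup\{u_{\ell+1}\}$ on top of at least $c-(k-2)$ (resp.\ $c-(k-1)$) cycle vertices, exceeding $c$. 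If you want to salvage your approach, this ``enter at the maximal $u_i$, exit at the nearest $S$-vertex below it'' device is exactly what replaces the uncontrolled iteration of rotations.
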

\begin{proof} Suppose $(C,P)$ is a lollipop and an edge in $E(H')$  intersects both $X$ and $Y$. Among such edges, choose $e$ containing $u_i$ with the maximum possible $i$.  Let $i'$ be the largest index less than $i$ such that $u_{i'}\in S_1\cup S_2$. 

By the definition of $Y$, $i>i_1$ and hence $i'\geq i_1-1$. Suppose a vertex in $X\cap e$ is $v_j$. 
Let $Q$ be a long $v_c, v_j$-segment of $C$ guaranteed by Claim~\ref{longQ}. If $i=\ell+1$, then consider the cycle 
$$C_0=v_c,Q,v_j,e,u_{\ell+1},f_{\ell},u_{\ell},\ldots,f_1, v_c.$$
If $(C,P)$ is an o-lollipop, then $C_0$
has at least $c-(k-2)$ vertices in $C$ and at least $k$ vertices in $S_1 \cup S_2 \cup \{u_{\ell+1}\}$, at most one of which is in $C$ (namely $v_c = u_1$). So $|C_0| \geq c-(k-2) + (k-1) >c$,
a contradiction. If $(C,P)$ is a p-lollipop, then $C_0$
is guaranteed only $c-(k-1)$ vertices in $C$, but none of the at least $k$ vertices in $S_1 \cup S_2 \cup \{u_{\ell+1}\}$ is in $C$. So $|C_0| \geq c-(k-1) + k >c$, again.

Thus, suppose $i\leq \ell$.
Then $h'(u_{i'},u_{\ell+1})\neq e$ and hence
$$C'_0=v_c,Q,v_j,e,u_i,f_i,\ldots,u_{\ell},f_\ell,u_{\ell+1},h'(u_{\ell+1},u_{i'}),f_{i'-1},\ldots, f_1, v_c$$
is a cycle. Similarly to $C_0$, it has at least $c-(k-2)+(k-1)>c$  vertices when  $(C,P)$ is an o-lollipop, 
and at least $c-(k-1)+k$  vertices when  $(C,P)$ is a  p-lollipop,
 a contradiction.
\end{proof}

\begin{claim}\label{xyf3}If $|S_1 \cup S_2| \geq k$ then no $f_m$ with $m\geq i_1$ intersects  $X$, and if $|S_1| \geq k-1$, then no $f_m$ with $m \geq j_1$ intersects $X$.
\end{claim}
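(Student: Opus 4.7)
The plan is by contradiction, constructing a cycle $C^*$ strictly longer than $C$. Suppose some $f_m$ with $m\ge i_1$ (Part 1) or $m\ge j_1$ (Part 2) contains $v_j\in X$. Using Claim~\ref{longQ}, I take a long $v_c,v_j$-segment $Q$ of $C$ (o-lollipop, $|V(Q)|\ge c-k+2$) or a long $e_c,v_j$-segment $Q$ (p-lollipop, $|V(Q)|\ge c-k+1$). Let $i'$ (Part 1) or $j'$ (Part 2) be the largest $S$-index (resp.\ $S_1$-index) strictly less than $m$; set $h':=h'(u_{i'},u_{\ell+1})$, which is an edge of $H'$ when $u_{i'}\in S_1$ and equals $f_{i'}$ when $u_{i'}\in S_2$ (the latter possibility occurs only in Part 1). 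The candidate cycle is
\[
  C^*=v_c,\,Q,\,v_j,\,f_m,\,u_{m+1},\,f_{m+1},\,\ldots,\,u_{\ell+1},\,h',\,u_{i'},\,f_{i'-1},\,\ldots,\,u_2,\,f_1,\,v_c,
\]
where in the p-lollipop case $v_c$ is replaced by whichever endpoint of $e_c$ the segment $Q$ meets and $f_1=e_c$ is the closing edge. All edges are distinct (the possible clash $h'=f_{i'}$ is avoided because $i'<m$), no vertex is repeated (in both lollipop types, $\{u_2,\ldots,u_{\ell+1}\}$ is disjoint from $V(Q)\setminus\{v_c\}$), and its length is $|V(Q)|+\ell-m+i'$.

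The key step is a lower bound on $\ell-m+i'$ using the hypothesis on $|S|$ or $|S_1|$. Because the $S$-indices are distinct positive integers, at most $i'$ of them lie in $\{1,\ldots,i'\}$ in an o-lollipop; in a p-lollipop, since $V(P)$ does not contain $u_1$, all indices are at least $2$, so at most $i'-1$ of them lie in $\{1,\ldots,i'\}$. Hence at least $|S|-i'$ (resp.\ $|S|-i'+1$) of them lie in $\{m,\ldots,\ell\}$. For Part 1 with $|S|\ge k$, this gives $\ell-m+i'\ge k-1$ in an o-lollipop and $\ell-m+i'\ge k$ in a p-lollipop, so $|V(C^*)|\ge c+1$, contradicting the choice of $C$. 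Case A (where $u_{i_1-1}\in S_2$) is handled identically, using $i_1-1\ge 1$ (o-lollipop) or $i_1-1\ge 2$ (p-lollipop) to absorb the shift.

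The degenerate situation $m=i_1$ (Part 1) or $m=j_1$ (Part 2) in case B, where no $i'$/$j'$ exists, is handled separately: then $u_m\in S_1$, and taking $h'=h'(u_m,u_{\ell+1})\in E(H')$ one builds a cycle that traverses every vertex $u_2,\ldots,u_{\ell+1}$, of length $|V(Q)|+\ell\ge c+1$, since $\ell\ge|S|\ge k$ (Part 1) or $\ell\ge k$ (Part 2, using $S_1\subseteq\{u_2,\ldots,u_\ell\}$ in a p-lollipop and $S_1\subseteq\{u_1,\ldots,u_\ell\}$ in an o-lollipop combined with $|V(Q)|\ge c-k+2$). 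For Part 2 in the main case, the weaker hypothesis $|S_1|\ge k-1$ costs one unit, yielding only $\ell-m+j'\ge k-2$ (o-lollipop) or $\ge k-1$ (p-lollipop) and hence $|V(C^*)|\ge c$. When $c\le 2k-2$ the sharper estimate $|V(Q)|\ge\lceil(c+1)/2\rceil$, which exceeds $c-k+1$ in this regime, immediately restores the strict inequality $|V(C^*)|\ge c+1$.

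The main obstacle is the tight corner $c=2k-1$ in Part 2, where the naive count gives $|V(C^*)|=c$ on a vertex set different from $V(C)$, so Rule (a) of the ``better lollipop'' definition cannot be invoked. To close this, I would form the auxiliary lollipop $(C^*,P^*)$ whose path $P^*$ uses the unused $P$-edges $f_{j'},\ldots,f_{m-1}$ (possibly extended further through $u_m$ by Claim~\ref{allneighbors}) and apply the tie-breaking rules (c)--(e) of the ``better lollipop'' definition: the swap of the $k-1$ discarded $C$-vertices for the $k-1$ new $P$-vertices, together with the reappearance of $f_m$ as an edge of $C^*$ rather than of $P^*$, contributes additional $V(P^*)$-vertices to edges of $C^*$. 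Bookkeeping the contribution under (c) and, if needed, invoking Claim~\ref{xyh'2} applied to the new lollipop (in which $f_m$ has become an $H'$-edge crossing the corresponding $X,Y$-cut) is the delicate final step.
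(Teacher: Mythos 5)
Your Part 1 is essentially the paper's argument: the same cycle (go around a long segment of $C$ to $v_j$, jump via $f_m$ to $u_{m+1}$, run to $u_{\ell+1}$, and return via $h'(u_{\ell+1},u_{i'})$), and your index-counting bound $\ell-m+i'\geq |S|-1$ is an equivalent way of saying that the cycle picks up all of $S_1\cup S_2\cup\{u_{\ell+1}\}$ except possibly $u_m$. That half is fine, including your explicit treatment of the degenerate case $m=i_1$.

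Part 2, however, has a genuine gap, and it is self-inflicted. By insisting that $j'$ be the largest $S_1$-index \emph{strictly} less than $m$, you throw away the vertex $u_m$ even when $u_m\in S_1$, and your count is tight exactly in that situation; this is why you land on $|V(C^*)|\geq c$ instead of $>c$. Your two patches do not close this: (i) the ``sharper $Q$'' estimate for $c\leq 2k-2$ fails already at $c=2k-2$ in the o-lollipop case, where $\lceil (c+2)/2\rceil = k = c-k+2$ gives no strict gain; and (ii) the $c=2k-1$ case is only a plan (``bookkeeping the contribution under (c)\dots is the delicate final step''), not an argument --- and it is not clear it can be made to work, since Rule (c) compares counts with multiplicity of $V(P)\setminus V(C)$ inside edges of $C$, which you have not actually estimated for the new lollipop. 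The fix is the one the paper uses and the one you already use in your ``degenerate'' case: take $j'$ to be the largest $S_1$-index \emph{at most} $m$. If $u_m\in S_1$ the cycle then routes through $u_m$ via $h'(u_{\ell+1},u_m)\in E(H')\neq f_m$; if $u_m\notin S_1$ then no $S_1$-index lies in $\{j'+1,\dots,m\}$ and your interval count improves to $\ell-m+j'\geq |S_1|\geq k-1$. Either way the cycle contains all of $S_1\cup\{u_{\ell+1}\}$ (at most one of which, namely $u_1$, lies on $C$), and $|V(C^*)|\geq (c-k+2)+(k-1+1)-1=c+1$ with no case split on $c$.
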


\begin{proof} Suppose $m\geq i_1$ and $f_m$ contains some  $v_j\in V(C)$. By symmetry, we may assume that $j\leq c/2$ when 
 $(C,P)$ is an o-lollipop and $j\leq (c+1)/2$ when 
 $(C,P)$ is a p-lollipop. 
  Let $Q$ be the path $v_c,e_{c-1},v_{c-1},\ldots,e_j, v_j$.

Suppose first that $|S_1 \cup S_2| \geq k$. If $m=i_1$ and the smallest index $i$ with $u_i \in S_1 \cup S_2$ is such that $u_i \in S_1$, then let $i'=m$; otherwise, let $i$ be the largest index less than $m$ such that $u_{i'} \in S_1 \cup S_2$. 
Then 
$$C''_0=v_c,Q,v_{j},f_m,u_{m+1},f_{m+1},\ldots,u_{\ell},f_\ell,u_{\ell+1},h'(u_{\ell+1},u_{i'}),f_{i'-1},\ldots f_1,v_c$$
is a cycle. It contains all vertices in $S_1\cup S_2\cup \{u_{\ell+1}\}$  apart from $u_m$. If $(S_1 \cup S_2)\cap V(C) \neq \emptyset$, then $(C,P)$ must be an o-lollipop and $u_1 \in S_1 \cup S_2$.
Hence $C''_0$ has  at least $c-(j-1)+(k+1)-2\geq (c-c/2)+k>c$  vertices, a contradiction. Otherwise, if $(S_1 \cup S_2) \cap V(C) = \emptyset$, then $C''_0$ has at least $c-(j-1) + (k+1) -1\geq (c-(c+1)/2)+k+1 > c$ vertices.

Now suppose $|S_1| \geq k-1$. In this case, let $i'$ be the largest index that is at most $m$ such that $u_{i'} \in S_1$. Then the same cycle $C''_0$ as above  contains all vertices in $S_1  \cup \{u_{\ell+1}\}$. Similarly, we get either $|C''_0| > c-(c/2-1)+k-1 > c$ or $|C''_0| > c-((c+1)/2-1) + k > c$. 
\end{proof}

\begin{claim}\label{xyc2} Suppose $|S_1 \cup S_2| \geq k$. If $(C,P)$ is an o-lollipop, then no  $e_j \in E(C)$ intersects  $Y$. If $(C,P)$ is a p-lollipop then no $e_j \in E(C)$ with $j \neq c$ intersects $Y$.\end{claim}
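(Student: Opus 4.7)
The plan is to argue by contradiction, following the template of Claims~\ref{xyh'2} and \ref{xyf3}. Suppose $e_j \in E(C)$ contains some $u_m \in Y$, with $j\neq c$ if $(C,P)$ is a $p$-lollipop. Since $Y=\{u_{i_1+1},\ldots,u_{\ell+1}\}$, we have $m\geq i_1+1$, so the largest index $i'$ strictly less than $m$ with $u_{i'}\in S_1\cup S_2$ is well-defined. I will splice $e_j$ into $C$ to build a cycle $C^*$ containing every vertex of $S_1\cup S_2\cup\{u_{\ell+1}\}$ along with most of $V(C)$, and then show $|C^*|>c$.

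Let $Q$ be a longest subpath of $C$ from some $v_{j^*}\in\{v_j,v_{j+1}\}$ to a vertex $v_\dagger$ that avoids $e_j$ (and also avoids $e_c$ in the $p$-lollipop case). In the $o$-lollipop case take $v_\dagger=v_c=u_1$, and Claim~\ref{longQ}(b) gives $|V(Q)\cap V(C)|\geq c-(k-1)$. In the $p$-lollipop case take $v_\dagger\in\{v_1,v_c\}$ (an endpoint of $e_c=f_1$); since $j\neq c$, the two subarcs of $C\setminus\{e_j,e_c\}$ partition $V(C)$ into two paths, so the longer one has $\geq\lceil c/2\rceil\geq c-(k-1)$ vertices, using $c\leq 2k-1$. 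Now set
\[
C^* = u_1,\,f_1,\,u_2,\,\ldots,\,u_{i'},\,h'(u_{\ell+1},u_{i'}),\,u_{\ell+1},\,f_\ell,\,u_\ell,\,\ldots,\,f_m,\,u_m,\,e_j,\,v_{j^*},\,Q,\,v_\dagger,
\]
closing at $v_\dagger=u_1$ in the $o$-lollipop case or via $f_1=e_c$ back to $u_2$ in the $p$-lollipop case (so there the $u$-traversal begins at $u_2$ rather than $u_1$). In the degenerate case $m=\ell+1$ the $h'$-detour is unnecessary: one simply traverses all of $P$ and closes via $e_j$ and $Q$.

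Three verifications complete the proof. First, the edges of $C^*$ are distinct: if $u_{i'}\in S_1$ then $h'(u_{\ell+1},u_{i'})\in E(H')$ lies outside $E(C)\cup E(P)$; if $u_{i'}\in S_2$ then $h'(u_{\ell+1},u_{i'})=f_{i'}$, and since $i'<m$ this is distinct from the $P$-edges $f_1,\ldots,f_{i'-1},f_m,\ldots,f_\ell$ that appear in $C^*$. Second, by the maximality of $i'$ no vertex of $S_1\cup S_2$ lies in $\{u_{i'+1},\ldots,u_{m-1}\}$, so $S_1\cup S_2\cup\{u_{\ell+1}\}$ is contained in the $u$-vertex set of $C^*$, yielding at least $k+1$ distinct $u$-vertices. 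Third, accounting for the single overlap $u_1=v_c$ in the $o$-lollipop case (none in the $p$-lollipop case), $|C^*|\geq (k+1)+(c-k+1)-1=c+1$, contradicting the optimality of $(C,P)$. The real care lies in the bookkeeping: distinguishing $u_{i'}\in S_1$ vs.\ $u_{i'}\in S_2$ for edge-distinctness, handling the degenerate case $m=\ell+1$ separately, and in the $p$-lollipop case making sure that excluding $e_c$ in addition to $e_j$ still leaves $Q$ long enough.
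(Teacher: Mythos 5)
Your proof is correct and follows essentially the same route as the paper's: both splice the long arc of $C$ avoiding $e_j$ (and $e_c$ in the p-lollipop case) together with the path traversal $u_m \to u_{\ell+1} \to u_{i'} \to$ start via the shortcut edge $h'(u_{\ell+1},u_{i'})$, and both count at least $c-(k-1)$ cycle vertices plus all $k+1$ vertices of $S_1\cup S_2\cup\{u_{\ell+1}\}$ with at most one overlap at $u_1=v_c$. The only cosmetic difference is the direction in which the cycle is traversed and your more explicit bookkeeping of the degenerate case $m=\ell+1$ and the edge-distinctness when $u_{i'}\in S_2$.
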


\begin{proof} Suppose   $e_j\in V(C)$ contains some $u_i\in Y$ where $j \neq c$ when $(C,P)$ is a p-lollipop. 
By symmetry, we may assume that $j\leq c/2$.   Let $Q$ be the path $v_c,e_{c-1},v_{c-1},\ldots,e_{j+1}, v_{j+1}$.

Let $i'$ be the largest index less than $i$ such that $u_{i'}\in S_1\cup S_2$. 
Consider the cycle
 \[C_1 = v_c, Q, v_{j+1}, e_j, u_i, P[u_i, u_{\ell+1}], u_{\ell+1}, h'(u_{\ell+1}, u_{i'}), u_{i'}, P[u_{i'}, v_c], v_c.\] 
 
It contains at least $c- c/2$ vertices in $C$ and all vertices in $S_1\cup S_2\cup \{u_{\ell+1}\}$.
Hence $|C_1|\geq  c/2+(k+1)-1>c$. 
\end{proof}

\begin{claim}\label{xyc2'} Suppose $|S_1| \geq k-1$ and some edge $e_j \in E(C)$ contains some $u_i \in Z$. Then either $(C,P)$ is an o-lollipop, $c=2k-1$, $e_j=e_{k-1}$,  $|S_1|=k-1$ and $j_1=1$, or $(C,P)$ is a p-lollipop and $j=c$.\end{claim}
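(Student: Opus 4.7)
The plan is to mimic the proof of Claim~\ref{xyc2}, adapted to the weaker hypothesis $|S_1|\geq k-1$. Given $u_i\in Z\cap e_j$, set $j^{\ast}=\max\{j_s\in S_1:\,j_s<i\}$; this is well defined since $u_i\in Z$ forces $j_1<i$. I then form the cycle
\[
C_1 \;=\; v_c,\, Q,\, v_\beta,\, e_j,\, u_i,\, P[u_i, u_{\ell+1}],\, u_{\ell+1},\, h'(u_{\ell+1}, u_{j^{\ast}}),\, u_{j^{\ast}},\, P[u_{j^{\ast}}, v_c],\, v_c,
\]
where $Q$ is a long $v_c$-to-$e_j$ segment of $C$ avoiding $e_j$, terminating at the appropriate endpoint $v_\beta\in\{v_j,v_{j+1}\}$. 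In the p-lollipop case $Q$ is taken from either $v_c$ or $v_1$ so as to avoid re-using the shared edge $f_1=e_c$, which yields $|Q|\geq\max(c-j,\,j)$. A direct count gives $|C_1|=|Q|+\ell-(i-1-j^{\ast})$: the $|Q|$ vertices of $Q$, plus all $\ell$ vertices of $V(P)\setminus V(C)$ except the indices skipped between $u_{j^{\ast}}$ and $u_i$. Since at most $\ell-(k-1)$ indices are missing from $S_1$ within $\{u_1,\ldots,u_\ell\}$ (or $\{u_2,\ldots,u_\ell\}$ in the p-lollipop case), the skip $i-1-j^{\ast}$ is at most $\ell-k+1$ (o-lollipop) or $\ell-k$ (p-lollipop).

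For the o-lollipop case, Claim~\ref{longQ}(b) gives $|Q|\geq\max(c-j,j+1)\geq c-(k-1)$, so $|C_1|\geq c$, with strict inequality unless \emph{both} bounds are tight. Equality in $|Q|$ forces $c=2k-1$ and $e_j=e_{k-1}$ (the unique $j$ realizing the bound at $c=2k-1$); equality in the skip bound forces $|S_1|=k-1$ exactly with all missing indices packed into $\{j^{\ast}+1,\ldots,i-1\}$; combined with $u_\ell\in S_1$, this further forces $u_1\in S_1$, i.e., $j_1=1$. Outside this configuration---which is exactly exception~(1)---we get $|C_1|>c$, contradicting the choice of $C$.

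For the p-lollipop case with $j\neq c$, $S_1\subseteq\{u_2,\ldots,u_\ell\}$ with $|S_1|\geq k-1$ forces $\ell\geq k$. The stricter skip bound $\leq\ell-k$ combined with $|Q|\geq\max(c-j,j)\geq\lceil c/2\rceil$ gives $|C_1|\geq\lceil c/2\rceil+k$; since $c\leq 2k-1$ yields $\lfloor c/2\rfloor\leq k-1<k$, this exceeds $c$, a contradiction. Hence $j=c$, which is exception~(2).

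The main obstacle is the tight Case~1 bookkeeping: one must trace each inequality back to its equality case to extract the precise exceptional configuration, using $u_\ell\in S_1$ to deduce $j_1=1$ when equality holds. A secondary technical point in Case~2 is verifying that rerouting $Q$ to avoid $f_1=e_c$ loses at most one vertex relative to the unrestricted long-segment bound of Claim~\ref{longQ}(b).
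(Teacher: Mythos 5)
Your proposal is correct and follows essentially the same route as the paper: build the cycle $C_1$ from a long segment of $C$ avoiding $e_j$ (and $e_c$ in the p-lollipop case), the edge $e_j$ into $u_i$, the tail of $P$, the edge $h'(u_{\ell+1},u_{i'})$ back to the last $S_1$-vertex before $u_i$, and the head of $P$; then count vertices and read off the exceptional configuration from the equality conditions. Your bookkeeping (skip bound via the packing of non-$S_1$ indices, and the extra vertex gained in the p-lollipop case because $S_1\cap V(C)=\emptyset$) matches the paper's count $|V(C_1)|\ge c-j+k-1$ (resp. $c-j+k$) and its equality analysis.
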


\begin{proof} Suppose   $e_j\in V(C)$ contains some $u_i\in Z$ where $j \neq c$ when $(C,P)$ is a p-lollipop. 
As in the proof of Claim~\ref{xyc2}, we may assume that $j\leq c/2$, and if $c$ is an o-lollipop, we may assume $j \leq (c-1)/2$.   Let $Q$ be the path $v_c,e_{c-1},v_{c-1},\ldots,e_{j+1}, v_{j+1}$.

Let $i'$ be the largest index less than $i$ such that $u_{i'} \in S_1$. Consider
 \[C_1 = v_c, Q, v_{j+1}, e_j, u_i, P[u_i, u_{\ell+1}], u_{\ell+1}, h'(u_{\ell+1}, u_{i'}), u_i', P[u_i', v_c], v_c.\] 
 
 If   $(C,P)$ is a  o-lollipop, then     
 \begin{equation}\label{vc1}
 |V(C_1)|\geq c-j+k-1\geq  c-(c-1)/2 + k-1=c+\frac{2k-c-1}{2}\geq c
 \end{equation}
  with equality only if  $c=2k-1$, $j=k-1$, $u_1 \in S_1$, and $|S_1| = k-1$.

If $(C,P)$ is a p-lollipop, then $S_1\cap V(C)=\emptyset$, so instead of $|V(C_1)|\geq c-j+k-1$ as in~\eqref{vc1} we have
$|V(C_1)|\geq c-j+k$ and conclude that $|V(C_1)|>c$.
\end{proof}

Claims~\ref{xyh'2}--\ref{xyc2'} together can be summarized as the following two corollaries.

\begin{cor}\label{XY2} Suppose $|S_1 \cup S_2| \geq k$. Then the only edges in $H$ 
 that may intersect both $X$ and $Y$ are $f_1,\ldots,f_{i_1-1}$.
\end{cor}

\begin{cor}\label{XY2'} Suppose $|S_1| \geq k-1$ and an edge  $g\in E(H)$ intersects $X$ and $Z$. 
Then either $g\in \{f_1,\ldots,f_{j_1-1}\}$ or
$(C,P)$ is an o-lollipop,  $g=e_{k-1}$, $c=2k-1$, $|S_1| = k-1$, and $j_1 = 1$. \end{cor}

Finally we will show that $|S_1|$ and $|S_1 \cup S_2|$ cannot be too large. For this, we use the notion of {\em aligned} paths in graphs introduced in~\cite{D} and apply Lemma~\ref{diraclemma} below 
  to the incidence bigraph $I_H$ of $H$. 

Let $P$ and $P'$ be paths in a graph starting from the same vertex. We say $P'$ is {\em aligned with} $P$ if for all $u,v \in V(P) \cap V(P')$, if $u$ appears before $v$ in $P$ then $u$ also appears before $v$ in $P'$. 

\begin{lemma}[Lemma 5 in~\cite{KLM2}]\label{diraclemma}Let $P$ be an $x,y$-path in a 2-connected graph $G$, and let $z \in V(P)$. Then there exists an $x, z$-path $P_1$ and an $x,y$-path $P_2$ such that

 (a) $V(P_1) \cap V(P_2) = \{x\}$ $\quad$ and (b)
 each of $P_1$ and $P_2$ is aligned with $P$.
\end{lemma}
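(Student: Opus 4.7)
The plan is to combine the fan version of Menger's theorem with an extremality argument on pairs of internally disjoint paths. Since $G$ is $2$-connected, the Fan Lemma applied to the vertex $x$ and the two-element target set $\{y,z\}$ produces internally vertex-disjoint paths $Q_1$ from $x$ to $z$ and $Q_2$ from $x$ to $y$ with $V(Q_1)\cap V(Q_2)=\{x\}$. Condition (a) is thus immediate; the task reduces to also arranging (b). (We may clearly assume $z\notin\{x,y\}$, otherwise one of $P_1,P_2$ can be taken trivially and the other to be $P$.)

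Among all such pairs $(Q_1,Q_2)$ satisfying (a), I would pick one that lexicographically minimizes $(\Phi(Q_1,Q_2),\,|E(Q_1)|+|E(Q_2)|)$, where $\Phi$ counts the total number of \emph{$P$-inversions} across the two paths: a $P$-inversion on $Q\in\{Q_1,Q_2\}$ is a pair $(w_i,w_j)$ with $i<j$, $\{w_i,w_j\}\subseteq V(Q)\cap V(P)$, and $w_j$ appearing before $w_i$ along $Q$. The claim is that any such extremal pair has $\Phi=0$, i.e., both $Q_1$ and $Q_2$ are aligned with $P$, which is exactly (b).

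To prove the claim, suppose for contradiction that $\Phi>0$, and without loss of generality that $Q_1$ has a $P$-inversion $(w_i,w_j)$; choose such an inversion with $j-i$ minimum. Then the subpath of $Q_1$ from $w_j$ to $w_i$ contains no $w_\ell$ with $i<\ell<j$, for otherwise a $P$-inversion of smaller index-gap would appear. I would then reroute $Q_1$ using the subpath $P[x,w_i]$ or $P[w_i,w_j]$ of $P$: in the clean case where the replacement prefix $P[x,w_i]$ is internally disjoint from both $Q_2-x$ and $Q_1[w_i,z]-w_i$, substituting it for $Q_1[x,w_i]$ yields a new admissible pair with strictly smaller $\Phi$ (the new prefix is $P$-aligned by construction, and the suffix $Q_1[w_i,z]$ loses the inversion with $w_j$). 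Otherwise, the rerouting segment meets $Q_2$ at some vertex $v$; then I would swap the initial segments of $Q_1$ and $Q_2$ at $v$, producing a new pair of internally disjoint paths from $x$ to $z$ and to $y$ with fewer $P$-inversions. In every case this contradicts the extremal choice.

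The main obstacle is precisely this case analysis of how the rerouting segment of $P$ meets $Q_1\cup Q_2$. Verifying that each reroute/swap yields paths that are (i) truly internally disjoint, (ii) have the correct endpoints $x,z$ and $x,y$, and (iii) strictly decrease $\Phi$ or at least the tiebreaker, requires careful bookkeeping; the minimality of $j-i$ is essential for controlling the local structure of $Q_1$ around the inversion so that the replaced $P$-segment does not itself introduce new inversions with indices strictly between $i$ and $j$. Once these cases are dispatched, the contradiction closes the proof.
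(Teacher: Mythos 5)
This lemma is not proved in the paper at all --- it is imported as Lemma 5 of~\cite{KLM2} --- so there is no internal proof to compare against; your proposal has to stand on its own. Its skeleton (a fan from $x$ to $\{y,z\}$ for condition (a), then an extremal exchange argument for condition (b)) is plausible, but the descent step, which is the entire content of the lemma, is asserted rather than established, and the one concrete claim you do make about it is false. Specifically, replacing $Q_1[x,w_i]$ by $P[x,w_i]$ need not decrease $\Phi$: the new prefix $P[x,w_i]$ contains \emph{every} vertex of $P$ preceding $w_i$, including many that were not on the old $Q_1$, and each such vertex $p$ creates a brand-new inversion with every vertex $u\in V(P)\cap V(Q_1[w_i,z])$ that precedes $p$ on $P$ (such $u$ sit after $w_i$ on the new path but before $p$ on $P$). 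One destroyed inversion can thus be traded for many new ones, so the pair you chose as $\Phi$-minimal need not be improvable by this move, and the induction does not close. The length tie-breaker does not help, since the reroute can also lengthen the path.

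The ``otherwise'' branch is also not well-defined: if $P[x,w_i]$ meets $Q_2$ at $v$, then $v$ need not lie on $Q_1$, so there is no ``initial segment of $Q_1$ ending at $v$'' to swap; under any sensible reading (say, replacing $Q_2$ by $P[x,v]$ followed by $Q_2[v,y]$) you have verified neither that the resulting pair still satisfies (a) nor that $\Phi$ drops. Finally, the case where $P[x,w_i]$ re-enters $Q_1[w_i,z]$ is not addressed at all, and shortcutting there changes the vertex set of the path in ways your potential function does not control. Since the simultaneous alignment of two internally disjoint paths is exactly what makes this lemma nontrivial (each path separately is easy), deferring this case analysis means the proposal is a plan rather than a proof. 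To make it work you would need either a different potential (e.g., one measured along $P$ rather than by counting inversions) or a reroute that follows $P$ only up to its first contact with $Q_1\cup Q_2$, with all disjointness checks written out.
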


\begin{lemma}\label{BL2}  
(A) $|S_1\cup S_2|\leq k-1$, \; and\;
(B) $|S_1| \leq k-2$. 
\end{lemma}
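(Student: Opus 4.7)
I would prove both parts by contradiction; the strategies are parallel, so I detail (A) and only indicate the modification for (B). For (A), suppose $|S_1 \cup S_2| \geq k$. The goal is to produce a Berge cycle in $H$ of length greater than $c$, contradicting the best-lollipop choice of $(C,P)$.

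The starting point is Corollary~\ref{XY2}: no edge of $H$ intersects both $X$ and $Y$, with the single exception of $e_c$ in the p-lollipop case. In the incidence graph $I_H$, this means every edge-vertex adjacent to $Y$ is non-adjacent to $X$. I would then exploit the $2$-connectedness of $I_H$ via Lemma~\ref{diraclemma}: let $\Pi$ be the natural path in $I_H$ that begins at $u_{\ell+1}$, runs backward along $P$ to $v_c$, and then traverses $C$ to some $v_j \in X$ chosen so that the complementary arc of $C$ from $v_j$ back to $v_c$ contains at least $c - (k-2)$ vertex-vertices (feasible by Claim~\ref{longQ}). Fix an intermediate vertex $z \in X$ on $\Pi$ splitting it suitably, and apply Lemma~\ref{diraclemma} to obtain internally disjoint paths $P_1, P_2 \subset I_H$ from $u_{\ell+1}$, each aligned with $\Pi$, terminating at $z$ and $v_j$ respectively.

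Splicing $P_1$, the long arc of $C$ from $z$ to $v_j$, and the reverse of $P_2$ yields a cycle $C^* \subset I_H$, whose image in $H$ is a Berge cycle of length at least $(c - (k-2)) + (k+1) - 2 = c + 1 > c$. Three ingredients feed the count: the long arc contributes $c - (k-2)$ vertex-vertices of $V(C)$ by Claim~\ref{longQ}; $P_1 \cup P_2$ contributes at least $k+1$ further vertex-vertices, namely $(S_1 \cup S_2) \cup \{u_{\ell+1}\}$, via a P\'osa-style rotation argument (using Claim~\ref{bestp}) that forces every $u_{i_j} \in S_1 \cup S_2$ not reached by $P_1 \cup P_2$ to yield an alternative lollipop contradicting the best-lollipop status; and the correction term $-2$ removes double-counting of the $X$-endpoints $z, v_j$. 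Part (B) follows the identical recipe with $Z$ in place of $Y$, $S_1$ in place of $S_1 \cup S_2$, and Corollary~\ref{XY2'} in place of Corollary~\ref{XY2}. The one ``tight'' case of Corollary~\ref{XY2'}, an o-lollipop with $c = 2k-1$, $|S_1| = k-1$, and $j_1 = 1$ (bridged by $e_{k-1}$), is handled separately via Claim~\ref{bigsmallcycle} applied directly to $e_{k-1}$, yielding a cycle-lengthening rerouting.

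The main anticipated obstacle is guaranteeing that $P_1 \cup P_2$ traverses every vertex of $S_1 \cup S_2$, which powers the $+(k+1)$ contribution above; achieving this requires blending alignment with $\Pi$ (from Lemma~\ref{diraclemma}, which prevents ``spirals'' that would double-count vertices with the $C$-arc) with the P\'osa-rotation lever from Claim~\ref{bestp}. A secondary issue is the p-lollipop exception via $e_c$: should $e_c$ serve as the only $X$-to-$Y$ bridge, the construction must route one of $P_1, P_2$ through $e_c$ and absorb the lost vertex into a shifted choice of $z$; I expect this to cost at most one vertex in the count, leaving the inequality intact.
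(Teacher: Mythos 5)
There is a genuine gap at the heart of your construction: you need $P_1\cup P_2$ to contain all of $S_1\cup S_2$ in order to claim the $+(k+1)$ contribution, but Lemma~\ref{diraclemma} gives no control whatsoever over which vertices the two paths visit -- they are only guaranteed to be disjoint and aligned with $\Pi$, and may wander through parts of $H$ far from $P$. Claim~\ref{bestp} cannot repair this: it only lets you reverse or re-chord the tail of $P$ when $u_{\ell+1}$ lies in some $f_m$ or in a suitable $H'$-edge; it does not convert a vertex of $S_1\cup S_2$ missed by $P_1\cup P_2$ into a better lollipop. The paper's proof avoids this problem entirely by not asking the Dirac-lemma paths to collect $S_1\cup S_2$ at all. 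It applies Lemma~\ref{diraclemma} to the path $v_1,e_1,\ldots,v_c,f_1,\ldots,u_{\ell+1}$ with $z=u_{i_1}$, truncates the resulting paths to disjoint connectors $P_1',P_2'$ running from $C$ to the tail of $P$, and then builds the long cycle so that the segment between the two attachment points on the tail is traversed \emph{along $P$ itself}, doubling back through $u_{\ell+1}$ via the chord $h'(u_{\ell+1},u_{i'})$ whose existence is exactly what membership in $S_1\cup S_2$ provides. Thus the $k$ or $k+1$ extra vertices come from $P$, not from $P_1\cup P_2$, and the delicate bookkeeping concerns only which one or two vertices of $S$ (such as $u_{i_1-1}$ or $u_{g-1}$) the cycle is forced to skip and how to compensate for them.

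A second, smaller but still substantive, underestimate is your treatment of part (B). The tight case of Corollary~\ref{XY2'} ($c=2k-1$, $|S_1|=k-1$, $j_1=1$, bridge $e_{k-1}$) is not dispatched by a single rerouting via Claim~\ref{bigsmallcycle}; the paper must first establish that $f_i\subseteq V(P)$ for all $H'$-neighbors $u_i$ of $u_{\ell+1}$, that $\ell\geq k$, and that $u_{\ell+1}$ has no neighbors in $X$, and then run a degree-counting argument on ``fitting'' versus ``non-fitting'' edges of $P$ to rule out any $X$--$Z$ edge (Claim~\ref{noedges}) before the Dirac-lemma construction can be applied cleanly. Without that preparatory work, the exceptional edge $e_{k-1}$ can defeat the vertex count in your cycle.
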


\begin{proof}
 Recall that by Lemma~\ref{ell3}, $\ell \geq 2$. We first prove (A).
Suppose towards contradiction that $|S_1 \cup S_2| \geq k$.

{\bf Case 1}. The smallest index $i$ with $u_i \in S_1 \cup S_2$ satisfies $u_i \in S_2$. By the definition of $S_2$ and $i_1$, $i=i_1-1$, and $i$ is the unique index less than $i_1$ such that $u_{\ell+1}\in f_i$. In particular,
 $i_1 \geq 2$ and moreover if $(C,P)$ is a p-lollipop, since the first vertex of $V(P)$ is $u_2$, $i_1-1 \geq 2$. 

Consider the 2-connected incidence bipartite graph $I_H$ of $H$ and the (graph) path
  \[P' = v_1, e_1, v_2, \ldots, v_c, f_1, \ldots, f_{\ell}, u_{\ell+1}\] in $I_H$. 
  We apply Lemma~\ref{diraclemma} to $P'$ with $z = u_{i_1}$ to obtain two (graph) paths $P_1$ and $P_2$ satisfying (a) and (b) in $I_H$. 

We modify $P_i$ as follows: if $P_i = w_1, w_2, \ldots, w_{j_i}$, let $q_i$ be the last index such that $w_{q_i} \in X':= \{v_1, e_1, \ldots, v_c, e_c\}$ and let $p_i$ be the first index such that $w_{p_i} \in Y':=\{u_{i_1}, f_{i_1}, u_{i_1+1}, \ldots, f_{\ell}, u_{\ell+1}\}$. 

If $w_{p_i} =u_s$ for some $s$, then set $P_i'= P_i[w_{q_i}, w_{p_i}]$. If $w_{p_i} = f_s$ for some $s$, then set $P_i' = P_i[w_{q_i}, w_{p_i}], u_{s+1}$. 

Observe that $P_1'$ and $P_2'$ are  Berge paths or partial Berge paths in $H$. Moreover, $P_1'$ ends with vertex $z = u_{i_1}$
and contains no other elements of $Y'$ since it is aligned  with $P'$. It is possible that $f_{i_1-1}$ is in $P'_1$.

If both $P_1'$ and $P_2'$  begin with $v_1$,  then some $P_i'$ avoids $f_1$ and first intersects the set $\{u_2, f_2, \ldots, u_{\ell+1}\}$ in $I_H$ at some vertex $w_j$. Then replacing the segment $v_1, e_c, v_c$ in $C$ with the longer segment $v_1, P_i'[v_1, w_j], w_j, P[w_j, f_1], f_1, v_c$ yields a cycle in $H$ that is longer than $C$, a contradiction. Therefore we may assume that $P_1'$ and $P_2'$ are vertex-disjoint and edge-disjoint in $H$.

Next we show that

 \begin{equation}\label{h'P}
\mbox{\em no edge in $H'$ containing $u_{\ell+1}$ is in $P'_1$ or $P'_2$.}
\end{equation}

Indeed, suppose $h\in E(H')$ contains $u_{\ell+1}$. Then by the maximality of $\ell$ and Claim~\ref{xyh'2}, $h\subset V(P)$. Therefore, by the definition of $i_1,\ldots,i_\alpha$, $h\subseteq \{u_{i_1},\ldots,u_{i_\alpha}\}$. But such edges are not in $E(P'_1)\cup E(P'_2)$ by construction. This proves~\eqref{h'P}.

Observe that for $m\geq i_1$,  if $f_m$
  is in $P'_2$, then by the definition of $P'_2$, it must be the last edge of $P'_2$.

Let $a_1$ and $b_1$ be the first elements of $P_1'$ and $P_2'$ respectively.  Let $Q$ be a long $b_1, a_1$-segment of $C$ guaranteed by Claim~\ref{longQ} (recall that  if $P_1'$ is a path, 
  then $a_1$ is the first vertex of $P_1'$ and  if $P_1'$ is a partial path, then it is the first edge, and similar for $b_1$).
  
Next we show  
  \begin{equation}\label{i1-1}
  f_{i_1-1} \notin  E(P_2').
  \end{equation}

Suppose $f_{i_1-1} \in E(P_2')$. Since $P_2$ is aligned with $P'$, the segment $P_2'[b_1, f_{i_1-1}]$ does not intersect $P[u_{i_1}, u_{\ell+1}]$. Then \[b_1,Q,a_1, P_1', u_{i_1}, P[u_{i_1}, u_{\ell+1}], u_{\ell+1}, f_{i_1-1}, P_2'[f_{i_1-1}, b_1], b_1\] contains at least $c-(k-1)$ vertices in $C$ and all vertices in $S_1 \cup S_2 \cup \{u_{\ell+1}\} - \{u_{i_1-1}\}$, i.e., it has at least $c-(k-1) + k+1-1>c$ vertices. This proves~\eqref{i1-1}.

 Let $u_g$ be the last vertex of $P'_2$.
 
 {\bf Case 1.1.}  $f_{g-1}$ is the last edge of $P'_2$. Since $u_g\in Y$, $g-1\geq i_1$. Hence by Corollary~\ref{XY2},
 $f_{g-1}$ does not intersect $X$. Then $P_2'$ has at least two edges and at least one internal vertex, say $z$. By the definition of $P_2'$ and the fact that $f_{g-1} \neq e_c$, $z\notin X\cup Y\cup \{u_{i_1}\}$.
 Let $g'$ be the largest index less than $g-1$ such that $u_{g'}\in S_1\cup S_2$. If $g' \neq i_1-1$ (so $g' \geq i_1$),  consider
 \[C_1 = b_1,Q,a_1,P_1',u_{i_1},P[u_{i_1}, u_{g'}], u_{g'}, h'(u_{g'},u_{\ell+1}),u_{\ell+1}, P[u_{\ell+1}, u_g], u_g,P_2',b_1.\]
This cycle has at least $c-(k-1)$ vertices in $C$ and $z\notin X\cup Y$. 
The cycle $C_1$ also may miss at most two vertices in $S_1\cup S_2$ (namely $u_{i_1-1}$ and $u_{g-1}$), and since we are in Case 1, 
\[\left(S_1 \cup S_2 \cup \{u_{\ell+1}\} - \{u_{g-1}, u_{i_1-1}\}\right) \cap V(C) = \emptyset.\] 

Therefore $C_1$ contains at least \[|V(Q)| + |V(P_2') - (V(C) \cup V(P))| + |S_1 \cup S_2 \cup \{u_{\ell+1}\} - \{u_{g-1}, u_{i_1-1}\}| \geq c-(k-1) + 1 + k+1 - 2>c\] vertices, a contradiction.
%

If $g' = i_1-1$, then by~\eqref{i1-1}, $f_{g'} \notin E(P_2')$. First suppose $b_1 \neq v_c$. We let $Q'$ be a long $b_1, v_c$-segment of $C$ if $(C,P)$ is an o-lollipop or a long $b_1, e_c$-segment of $C$ if $(C,P)$ is a p-lollipop (without loss of generality, this path ends with $v_c$), and take the cycle
\[C_2 = b_1, Q', v_c (=u_1), \ldots, u_{i_1-1}, f_{i_1-1}, u_{\ell+1}, P[u_{\ell+1}, u_{g}], u_{g}, P_2', b_1\] which again omits  only $u_{g-1}$ from $S_1 \cup S_2$ (which is possibly in $V(C)$) and satisfies \[|C_2| \geq c-(k-1) + 1 + k+1 -2 > c.\]

Suppose now that $b_1 = v_c$. If $f_{i_1-1}  \in E(P_1')$, let
\[C_3 = v_c, Q, a_1, P_1'[a_1, f_{i_1-1}], f_{i_1-1}, u_{\ell+1}, P[u_{\ell+1}, u_g], u_g, P_2', v_c.\] 
Recall that $P_2'$ has an internal vertex $z\notin X\cup Y\cup \{u_{i_1}\}$. Also, all vertices in $S_1 \cup S_2 \cup \{u_{\ell+1}\} - \{u_{i_1-1}, u_{g-1}\}$ are in $C_3$ and none of them belongs to $C$. Therefore $|C_3| \geq c-(k-1) + 1 + (k+1) - 2 > c$. 

Lastly, if $f_{i_1-1} \notin E(P_1')$, then the cycle
\[C_4 = v_c, Q, a_1, P_1', u_{i_1}, f_{i_1-1}, u_{\ell+1}, P[u_{\ell+1}, u_g], P_2', v_c\]
 contains at least $c-(k-1) + k+1 - 1> c$ vertices. 

{\bf Case 1.2.} The last edge of $P'_2$ is not $f_{g-1}$. Then we let $g'$ be the largest index less than $g$ such that $u_{g'}\in S_1\cup S_2$. In this case, the cycle $C_1$ from the previous subcase can miss only $u_{i_1-1}$ in $S_1\cup S_2$ and contains at least $k-1$ vertices in $S_1 \cup S_2- \{u_{i_1-1}\}$ which are disjoint from $V(C)$.  
We get
\[|C_1| \geq |V(Q)| + |S_1 \cup S_2- \{u_{i_1-1}\}| + |\{u_{\ell+1}\}| \geq c-(k-1) + (k-1) + 1 > c.\]  This finishes Case 1.

\medskip
{\bf Case 2.}  The smallest $i$ with $u_i\in S_1\cup S_2$ is such that $u_i\in S_1$. Recall that in this case $i=i_1$
 and the other indices  of the vertices in  $ S_1 \cup S_2$ are $i_2,\ldots,i_{\alpha}$ in increasing order. 
 
 Now, define $P',P'_1,P'_2$ and $Q$ as in Case 1. Then we can repeat the final part of the proof of Case 1 with the simplification that the cycle $C_1$ omits at most the vertex $u_{g-1}$ in $S_1 \cup S_2$ and this occurs only if $f_{g-1}$ is the last edge of $P_2'$. As in Case 1.1, since $f_{g-1} \neq e_c$ and $g-1\geq i_1$, 
  $P_2'$ contains at least one vertex outside of $V(C) \cup Y\cup \{u_{i_1}\}$. Note also that it may be the case $u_{i_1}=u_1 \in V(C)$. 
 
 If no vertices of $S_1 \cup S_2$ are omitted from $C_1$, then 
 \[|C_1| \geq |V(Q)| + |S_1 \cup S_2| - |\{u_{i_1}\}| + |\{u_{\ell+1}\}| \geq c-(k-1) + k-1 + 1 > c.\]
 
 Otherwise, if $u_{g-1} \in S_1 \cup S_2$ was omitted from $C_1$, then
 \[|C_1| \geq |V(Q)| + |S_1 \cup S_2| - 1 -  |\{u_{i_1}\}| + |\{u_{\ell+1}\}| + |V(P_2') - V(C) \cup Y\cup \{u_{i_1}\}| > c.\]
This proves Part (A).

\medskip



Now we prove (B). Recall that $u_{j_1}, \ldots, u_{j_\beta}$ are the $H'$-neighbors of $u_{\ell+1}$ and suppose $\beta \geq k-1$. Corollary~\ref{XY2'} asserts that apart from $f_1,\ldots,f_{i_1-1}$ only $e_{k-1}$ may intersect both $X$ and $Z$. First part of our proof is to show that $e_{k-1}$ does not intersect $Z$.


\begin{claim}\label{xyf2} $f_{i} \subseteq V(P)$ for all $u_i \in N_{H'}(u_{\ell+1})$.
\end{claim}
\begin{proof}
By Claim~\ref{xyf3}, if $u \in f_{i} -V(P)$, then $u \notin V(C) \cup V(P)$. If $P$ is a path, then we can replace it with the longer path
$u_1, P[u_1, u_i], u_i, h'(u_i, u_{\ell+1}), u_{\ell+1}, P[u_{\ell+1}, f_i], f_i, u$.
Otherwise we replace $P$ with the partial path  $f_1, P[f_1, u_i], u_i, h'(u_i, u_{\ell+1}), u_{\ell+1}, P[u_{\ell+1}, f_i], f_i, u$.
\end{proof}

\begin{claim}\label{l>k-1} $\ell\geq k$.
\end{claim}
\begin{proof} Suppose  $\ell\leq k-1$. Since $|S_1| \geq k-1$,
 $\ell=k-1$ and each vertex in $V(P) - \{u_{\ell+1}\}$ including $u_1$ is in $S_1$.
 
 If $u_\ell\notin N_{H'}(u_{\ell+1})$, then $|N_{H'}(u_{\ell+1})|\leq \ell-1=k-2$, $N_{H'}(u_{\ell+1}) = V(P) - \{u_{\ell}, u_{\ell+1}\}$, and by Claims~\ref{xyc2'} and \ref{xyf2}, the only edges containing  $u_{\ell+1}$ and not contained in $V(P)$ could be $e_{k-1}$ and $f_\ell$.
 
We get $d_{H}(u_{\ell+1}) \leq {k-2 \choose r-1} + |E(P)| + 1 \leq {k-2 \choose r-1} + k$. When $r \geq 4$,  this is less than ${k-1 \choose r-1} + 1 = \delta(H)$. If $r = 3$, then each edge $f_i$ with $i < \ell$ containing $u_{\ell+1}$ satisfies $f_i = \{u_i, u_{i+1}, u_{\ell+1}\}$. Thus for $i \leq \ell-2$, such an edge $f_i$ is a subset of $N_{H'}[u_{\ell+1}]$ and is accounted for in the ${|N_{H'}(u_{\ell+1})| \choose r-1}$ term of $d(u_{\ell+1})$. Hence
 $d_H(u_{\ell+1}) \leq {k-2 \choose r-1} + |\{f_{\ell-1}, f_\ell, e_{k-1}\}| \leq {k-2 \choose r-1} + k-2 \leq {k-1 \choose r-1}$, a contradiction.
 
If $u_\ell\in N_{H'}(u_{\ell+1})$, then by Claim~\ref{xyf2}, $d_{H - E(C)}(u_{\ell+1}) \leq {k-1 \choose r-1}$, and the only edge containing  $u_{\ell+1}$ not contained in $V(P)$ could be $e_{k-1}$. Then in order  to satisfy $d(u_{\ell+1}) \geq \delta(H)$, we need that $u_{\ell+1}\in e_{k-1}$ and
every $r$-tuple contained in $V(P)$ and containing  $u_{\ell+1}$ is an edge in $H$. 

Hence we can reorder the vertices in $V(P)-u_1$ to make any vertex apart from $u_1$ the last vertex. The resulting path together with $C$ is also a best lollipop. By Claims~\ref{xyh'2}--\ref{xyf2} and the above, either $d(u_i) < {k-1 \choose r-1} + 1 \leq \delta(H)$ for some $i$ leading to a contradiction, or each $u_i \in V(P)$ is contained in $e_{k-1}$. In the latter case, 
$r=|e_{k-1}|\geq 2+\ell=k+1$, a contradiction.
\end{proof}

\begin{claim}\label{l+1C} $u_{\ell+1}\notin e_{k-1}$. 
\end{claim}
\begin{proof} Suppose $u_{\ell+1}\in e_{k-1}$. 
By Corollary~\ref{XY2'}, this is possible only if  $(C,P)$ is an o-lollipop and $c=2k-1$.
By Claim~\ref{l>k-1}, $\ell\geq k$ and so the cycle
$$C_0=v_c,e_{c-1},\ldots,v_{k},e,u_{\ell+1},f_{\ell},u_{\ell},\ldots,f_1, v_c$$
has at least $k+k=2k$ vertices, a contradiction.
\end{proof}

\begin{claim}\label{noedges} If $(C,P)$ is an o-lollipop, then  $e_{k-1}\cap Z=\emptyset$.
\end{claim}

\begin{proof} Suppose $u_i\in e_{k-1}\cap Z$.
By Corollary~\ref{XY2'}, this is possible only if  $c=2k-1$, $|S_1|=k-1$ and $j_1=1$.
By Claim~\ref{l+1C}, $i<\ell+1$. Let $i'$ be the largest number less than $i$ such that $u_{i'}\in N_{H'}(u_{\ell+1})$. Denote $I_1=\{i'+1,i'+2,\ldots,i-1\}$ and $I_2= \{1,\ldots,i'\}\cup \{i,i+1,\ldots,\ell\}$. By the choice of $i'$, $I_1 \cap S_1 = \emptyset$.

Consider the cycle
$$C_1=v_c,e_{c-1},\ldots,v_k,e_{k-1},u_i,f_i,\ldots,u_{\ell},f_\ell,u_{\ell+1},h'(u_{\ell+1},u_{i'}),f_{i'-1},\ldots,u_1.$$

We have $|C_1| \geq k + |S_1-\{u_1\}| + |\{u_{\ell+1}\}| \geq k + k-2 + 1 = 2k-1$ with equality only if $S_1 = \{u_i: i \in I_2\}$ and $|S_1| = |I_2|=k-1$. In particular, this means that the indices of the vertices in $S_1$ form  two intervals,
$\{1,\ldots,i'\}$ and $\{i,i+1,\ldots,\ell\}$, and the second of these intervals starts from $i$.
This yields $e\cap V(P)=\{u_i\}$.

Since we proved $|S_1 \cup S_2| \leq k-1 =  |S_1|$, we must have 
\begin{equation}\label{fi'}
\mbox{\em for each $m\in I_1$, $u_{\ell+1}\notin f_m$.}
\end{equation}

By Claim~\ref{xyf2}, for each $u_m \in N_{H'}(u_{\ell+1})$, $f_m \subseteq V(P)$. 
Suppose now that for some $m\in I_2 - \{i'\}$, and $i'<i''<i$, $u_{\ell+1},u_{i''}\in f_m$ and $H'$ has an edge $g$ containing $\{u_m,u_{m+1}\}$. In this case, if $m\geq i$, then the cycle
$$C_2=v_c,e_{c-1},\ldots,v_k,e_{k-1},u_i,f_i,\ldots,u_m,g,u_{m+1},f_{m+1},\ldots,u_{\ell},f_\ell,u_{\ell+1},f_m,u_{i''},f_{i''-1},\ldots,u_1$$
is longer than $C$, and if $1\leq m\leq i'$, then the cycle
$$C_3=v_c,e_{c-1},\ldots,v_k,e_{k-1},u_i,f_i,\ldots,u_{\ell},f_\ell,u_{\ell+1},f_m,u_{i''},f_{i''-1},\ldots,f_{m+1},u_{m+1},g,u_m,\ldots,u_1$$
is longer than $C$. Therefore,
\begin{equation}\label{fm4}
\parbox{14cm}{\em If $m\in I_2$, $i'<i''<i$ and $\{u_{\ell+1},u_{i''}\} \subset f_m$, then no edge in $H'$ contains $\{u_m,u_{m+1}\}$.}
\end{equation}

%

 

For $1\leq m\leq \ell$, call the edge $f_m$ {\em fitting} if $u_{\ell+1}\in f_m$ and $f_m\subseteq N_{H'}[u_{\ell+1}]$ and
{\em non-fitting} if $u_{\ell+1}\in f_m$ and $f_m\not\subseteq N_{H'}[u_{\ell+1}]$.
Let $R$ denote the set of fitting edges and $R'$ denote the set of non-fitting edges.
By~\eqref  {fi'}, if $f_m\in R$, then $m\in I_2$. By the definition of $I_2$, if 
 $m_1\in I_2- \{i',\ell\}$, then $m_1+1\in I_2$.

\medskip
{\bf Case 1.} $u_\ell\in N_{H'}(u_{\ell+1})$.  By Claim~\ref{l+1C}, all edges containing $u_{\ell+1}$ must either be contained in $S_1 \cup \{u_{\ell+1}\}$ or be non-fitting edges. Since $\delta(H)\geq 1+{k-1\choose r-1}$, this implies $R'\neq \emptyset$.
 Moreover, if there is a non-fitting edge $f_{m_1}\notin \{f_{i'},f_\ell\}$, then by Claim~\ref{xyf3} and~\eqref{fm4}, 
\begin{equation}\label{k-3}\hbox{the ${k-3\choose r-3}$ $r$-tuples  in the set $N_{H'}[u_{\ell+1}]$  containing $\{u_{m_1},u_{m_1+1},u_{\ell+1}\}$ are not edges of $H'$.
}
\end{equation}
The existence of such non-fitting $f_{m_1}\notin \{f_{i'},f_\ell\}$ is not possible if $r=3$ because in this case    $f_{m_1} = \{u_{\ell+1}, u_{m_1}, u_{m_1+1}\} \subseteq N_{H'}[u_{\ell+1}]$. So we may suppose $r \geq 4$. 
By~\eqref{k-3} we have
\[d(u_{\ell+1})\leq {|S_1| \choose r-1} - {k-3 \choose r-3} + |R'|  \leq {k-1 \choose r-1} - {k-3 \choose r-3} + |R'|.\]
Thus to have  $d(u_{\ell+1})\geq 1+{k-1\choose r-1}$, we need at least $1+{k-3\choose r-3}\geq k-2$   non-fitting edges $f_m$ for $m\in I_2$.

Since by the case,~\eqref{fm4} and Claim~\ref{xyf2}, $f_\ell$ has no vertex outside of $N_{H'}[u_{\ell+1}]$ and hence is  fitting, for each of the $k-2\geq r-1\geq 3$ values of  $m\in I_2-\{\ell\}$, $f_m$ must be non-fitting. But then at least  $1+{k-3\choose r-3}\geq k-2$ of the $r$-tuples contained in $N_{H'}[u_{\ell+1}]$ and containing $u_{\ell+1}$ are not edges of $H$. So $d(u_{\ell+1}) \leq {k-1 \choose r-1} - (k-2) + (k-2) < \delta(H)$.

Hence in order to have $d(u_{\ell+1}) \geq {k-1 \choose r-1} + 1$ we may assume that the only non-fitting edge is $f_{i'}$ and moreover every $r$-subset of $N_{H'}[u_{\ell+1}]$ containing $u_{\ell+1}$ is an edge of $H'$. In particular, there is an edge $g\in E(H')$ containing $u_{\ell+1},u_{i'}$ and $u_{i+1}$.
Then
\[C_4 = v_c, e_{c-1}, \ldots, v_k, e_{k-1}, u_i, f_{i-1}, \ldots, u_{i'+1}, f_{i'}, u_{\ell+1}, f_\ell, \ldots, u_{i+1}, g, u_{i'}, f_{i'-1}, \ldots, f_1, v_c\]
is longer than $C$.

\medskip
{\bf Case 2.} $u_\ell\notin N_{H'}(u_{\ell+1})$. Then $|N_{H'}(u_{\ell+1})|\leq k-2$,  and by Claim~\ref{l+1C}, 
\begin{equation}\label{k-2}
|R|+|R'|=d_P(u_{\ell+1})=d_H(u_{\ell+1})-d_{H'}(u_{\ell+1})\geq 1+{k-1\choose r-1}-\left[{k-2\choose r-1}-|R|\right]
=1+{k-2\choose r-2}+|R|.
\end{equation}
So, if $r\geq 4$, then $k\geq r+2\geq 6$, and $|R'|\geq 1+{k-2\choose 2}=1+\frac{(k-2)(k-3)}{2}\geq 1+\frac{3(k-2)}{2}$. 
But~\eqref{fi'} yields $d_P(u_{\ell+1})\leq |I_2|=k-1$,  a contradiction. On the other hand, if $r=3$, then 
 similarly to  Case 1, $R'\subseteq \{f_\ell,f_{i'}\}$, and hence~\eqref{k-2} yields
$$2\geq |R'|\geq 1+{k-2\choose r-2}=k-1\geq r+1=4,$$
 a contradiction.
\end{proof}

We now complete the proof of (B). As in the proof of (A), we apply Lemma~\ref{diraclemma} to the same path \[P' = v_1, e_1, v_2, \ldots, v_c, f_1, \ldots, f_{\ell}, u_{\ell+1}\] in $I_H$ with $z = u_{j_1}$. We obtain paths $P_1$ and $P_2$ and modify them to $P_1'$ and $P_2'$ with the same rules as in (A) but with $Z' = \{u_{j_1},f_{j_1}, u_{j_1+1},\ldots, u_{\ell+1}\}$ in place of $Y'$. 

We again get that $P_1'$ and $P_2'$ are vertex-disjoint and edge-disjoint and~\eqref{h'P} holds. Let $Q$ be a long segment of $C$ connecting $P_1'$ and $P_2'$ with at least $c-(k-1)$ vertices. Suppose the endpoints of $Q$ are the vertices $a_1$ and $b_1$.

Let $u_i$ be the last vertex of $P_2'$, and let $i'$ be the smallest index less than $i$ such that $u_{i'} \in S_1$. Consider the cycle
\[C' = a_1, Q, b_1, P_2', u_i, P[u_i, u_{\ell+1}], u_{\ell+1}, h'(u_{\ell+1}, u_{i'}), u_{i'} P[u_{i'}, u_{j_1}], u_{j_1}, P_1', a_1\]
which contains all $k$ vertices in $S_1 \cup \{u_{\ell+1}\}$. If none of these vertices is in $C$, then
$|C'| \geq c-(k-1) + k >c$, a contradiction. If there is such a vertex, it could be only $u_1$, in which case $(C,P)$ is an o-lollipop and  $j_1=1$. Then by Corollary~\ref{XY2'} and 
 Claim~\ref{noedges}, $P'_2$ contains at least one vertex outside of $V(C \cup P)$. It follows that $|C'| \geq c-(k-1) + k-1 + 1 >c$, a contradiction again.
\end{proof}




\section{Partial Berge paths in best $p$-lollipops are long}
In this section we concentrate on  $p$-lollipops and show that the partial path $P$ in them must be  long (namely, $\ell \geq k$). We do this by showing that $u_{\ell+1}$ has no $H'$-neighbors inside of $C$, and hence $P$ must be sufficiently long to contain all  $H'$-neighbors of $u_{\ell+1}$. The main lemma of this section is the following.

\begin{lemma}\label{partk} If $(C,P)$ is a p-lollipop then $|V(P)| = \ell \geq k$. 
\end{lemma}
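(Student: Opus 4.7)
The plan is a proof by contradiction: suppose $\ell \leq k-1$, and show that $d_H(u_{\ell+1}) < \binom{k-1}{r-1} + 1$, contradicting~\eqref{main}.

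First I would establish that $N_{H'}(u_{\ell+1}) \cap V(C) = \emptyset$. Indeed, if some $g \in E(H')$ contained both $u_{\ell+1}$ and a vertex $v_i \in V(C)$, the Berge path $P^* = v_i, g, u_{\ell+1}, f_\ell, u_\ell, f_{\ell-1}, \ldots, f_2, u_2$ would make $(C, P^*)$ an o-lollipop with the same cycle, the same $|E(P^*)| = \ell$, and $V(P^*) \setminus V(C) = V(P) \setminus V(C) = \{u_2, \ldots, u_{\ell+1}\}$. Rules (a)--(c) then tie with $(C,P)$, but Rule (d) strictly favors the o-lollipop $(C, P^*)$, contradicting the optimality of $(C,P)$. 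Combined with Claim~\ref{allneighbors}(2), this forces $N_{H'}(u_{\ell+1}) \subseteq \{u_2, \ldots, u_\ell\}$, so $d_{H'}(u_{\ell+1}) \leq \binom{\ell-1}{r-1}$.

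Next I would bound $d_H(u_{\ell+1})$ via the partition $E(H) = E(H') \sqcup E(P) \sqcup (E(C) \setminus \{e_c\})$. Claim~\ref{bigl2}(2) excludes $u_{\ell+1}$ from $e_i$ for $i \in \{1, \ldots, \ell-1\} \cup \{c-1, \ldots, c-\ell+1\}$, leaving at most $c - 2\ell + 1$ admissible edges in $E(C) \setminus \{e_c\}$; at most $\ell$ edges of $E(P)$ contain $u_{\ell+1}$, with the bound improving to $\ell-1$ when $r = 3$ (since then $e_c = f_1 = \{v_c, v_1, u_2\}$ cannot contain $u_{\ell+1}$). Hence
\[
 d_H(u_{\ell+1}) \leq \binom{\ell-1}{r-1} + \ell + (c - 2\ell + 1).
\]
After substituting $c \leq 2k-1$ and optimizing over $\ell \leq k-1$, this is already strictly below $\binom{k-1}{r-1} + 1$ whenever $\binom{k-2}{r-2}$ is large enough relative to $k$ --- which handles all parameters except the extremal ones with small $r$ and $\ell$ near $k-1$.

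For the remaining tight cases I would sharpen the count via Claim~\ref{bestp}(A) and Claim~\ref{shortpath}. Each $f_m$ with $m < \ell$ that contains $u_{\ell+1}$ yields (by reversing $P[u_m,u_{\ell+1}]$) a rearranged best p-lollipop whose terminal vertex is $u_{m+1}$, so the Step 1 conclusion then also applies to $u_{m+1}$: many vertices of $V(P)$ simultaneously satisfy $N_{H'}(\cdot) \cap V(C) = \emptyset$. Claim~\ref{shortpath} then restricts the permissible indices $j$ at which $u_{\ell+1} \in e_j$ can coexist with corresponding constraints on each rearranged terminal, and cycle-rerouting arguments --- inserting $P$ at $e_j$ as in the proof of Lemma~\ref{ell3} --- rule out $u_{\ell+1}$ from too many of the edges $\{e_\ell, \ldots, e_{c-\ell}\}$, removing the last bit of slack in the degree bound. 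The main obstacle is precisely this final tightening: the sharpness examples $H_1$ and $H_2$ sit at $c = 2k-2$ with $\delta(H) = \binom{k-1}{r-1}$, so the crude bound has no slack when $r$ is small, and only the combined structural machinery (rearrangement via Claim~\ref{bestp}, index-gap inequalities via Claim~\ref{shortpath}, and careful cycle-rerouting) closes the remaining gap.
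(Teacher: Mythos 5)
Your first step is correct and is exactly the paper's opening move: the Rule~(d) comparison with the o-lollipop $v_i,g,u_{\ell+1},f_\ell,\ldots,u_2$ shows all $H'$-neighbors of $u_{\ell+1}$ lie in $V(P)$, giving $d_{H'}(u_{\ell+1})\leq\binom{\ell-1}{r-1}$. Your crude degree bound is also correctly assembled. The problem is that, as you half-admit, it simply does not yield a contradiction on a large set of parameters, and what you offer to close those cases is a plan rather than a proof. Concretely, for $r=3$ and $\ell=k-1$ (the hardest case, and an infinite family) your bound with $c=2k-1$ gives $d_H(u_{\ell+1})\leq\binom{k-2}{2}+(\ell-1)+(c-2\ell+1)=\binom{k-1}{2}+2$, which exceeds $\delta(H)-1$; for $r=3,k=5$ it fails for every $\ell$; for $r=4,k=6$ it fails at $\ell=5$; and even where it gives exactly $\binom{k-1}{r-1}+1$ you are left with equality cases that need structural analysis. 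So the ``final tightening'' is not a small residue --- it is the bulk of the lemma.

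Moreover, the mechanism you sketch for the tightening is not the one that works. You propose to shave the count $b_{\ell+1}$ of cycle edges containing $u_{\ell+1}$ by spacing constraints, but Claim~\ref{shortpath} only separates a cycle edge containing $u_q$ from one containing $u_{\ell+1}$ for \emph{distinct} path vertices $u_q\neq u_{\ell+1}$; applied to $u_{\ell+1}$'s cycle edges alone it is vacuous. The paper's actual argument runs in the opposite direction: it first forces a \emph{lower} bound ($u_{\ell+1}$ lies in at least $k$ edges of $E(C)\cup E(P)$, hence in at least $a=k-\ell$ cycle edges besides $e_c$), then reverses the partial path at such a cycle edge $e_i$ so that $u_2$ becomes the terminal vertex and inherits the same lower bound, and finally uses Claim~\ref{shortpath} to show that the $\geq a+1$ cycle edges through $u_2$ and the $\geq a+1$ through $u_{\ell+1}$ must be interleaved with gaps of length $k-a-1$, forcing $|E(C)|\geq 2k>c$ --- a contradiction with the cycle length rather than with the degree. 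Even then a stubborn subcase ($B_2=B_{\ell+1}$, $\ell=2$, $r\geq4$) survives and needs a separate edge-by-edge argument. None of this two-endpoint machinery is present in your proposal (your reversals at path edges $f_m$ produce terminals $u_{m+1}$, not the needed reversal at a \emph{cycle} edge producing terminal $u_2$), so the proof as written has a genuine gap precisely where the lemma is hard.
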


\begin{proof}In Section~\ref{secl1} we showed that $\ell \geq 2$. Suppose towards contradiction that $2\leq \ell \leq k-1$. 
We will first show that

\begin{equation}\label{partialedges}\hbox{all $H'$-neighbors of $u_{\ell+1}$ are contained in $V(P)$.}
\end{equation}

By Claim~\ref{allneighbors}, all $H'$-neighbors of $u_{\ell+1}$ are in $V(C) \cup V(P)$. If $u_{\ell+1} \in e \in E(H')$ and $v_i \in e$ for some $v_i \in V(C)$, we  let $P' = v_i, e, u_{\ell+1}, P[u_{\ell+1}, u_2], u_2$. Observe that $V(P')-V(C) = V(P) - V(C)$, and $(C,P')$  is  better  than $(C,P)$ by Rule (d). This proves~\eqref{partialedges}.

Next we show that

\begin{equation}\label{edgesCP}\hbox{$u_{\ell+1}$ is contained in at least $k$ edges in $E(C) \cup E(P)$.}
\end{equation}

 By~\eqref{partialedges}, $|N_{H'}(u_{\ell+1})| \leq |V(P) - \{u_{\ell+1}\}| \leq k-2$.  Then the number of edges in $E(C) \cup E(P)$ containing $u_{\ell+1}$ must be at least \[\delta(H) - {|N_{H'}(u_{\ell+1})| \choose r-1} \geq {k-1 \choose r-1}+1 - {k-2 \choose r-1} \geq k-1,\] with equality only if $r = 3$, $|N_{H'}(u_{\ell+1})| = k-2$ (and so $V(P) = N_{H'}[u_{\ell+1}]$), $u_{\ell+1}$ is contained in all ${k-2 \choose r-1}$ possible $H'$-edges, and no edge of $E(C) \cup E(P)$ containing $u_{\ell+1}$ is a subset of $N_{H'}[u_{\ell+1}]$. If this is the case, then $e=\{u_2, u_3, u_{\ell+1}\} \in E(H') $, and we swap $f_2$ with $e$ to get a  partial path that is better than $P$ by Rule~(e). This proves~\eqref{edgesCP}. 

%

Say $|V(P)|=\ell = k-a$ where $2\leq k-a \leq k-1$. Since $|E(P)| = k-a$, by~\eqref{edgesCP}, $u_{\ell+1}$ is contained in at least $a$  edges in  $E(C)-e_c$. By
 Claim~\ref{allneighbors}(2), none of these edges is in the set $\{e_1, \ldots, e_{\ell-1}\} \cup \{e_{c-1}, \ldots, e_{c-(\ell-1)}\}$. Thus, $u_{\ell+1}$ is contained in at least $a$  edges in $\{e_{k-a}, e_{k-a+1}, \ldots, e_{c-(k-a)}\}$. Moreover, $u_{\ell+1}$ is contained in exactly $a$ such edges if and only if it is contained in all $k-a$ edges of $P$ (in particular, $u_{\ell+1} \in e_c$).

Let $e_i$ contain $u_{\ell+1}$ for some $i \neq c$. Consider the partial path $P' =e_i, u_{\ell+1}, f_\ell, \ldots, u_2$. Then $V(P') = V(P)$ and $E(P')-E(C)=E(P)-E(C)$. Thus $(C,P')$ also is a best lollipop.
So, as above we get  that all $H'$-neighbors of $u_2$ are  in $V(P)$, $u_{2}$ is contained in at least $k$ edges of $E(P') \cup E(C) = E(P) \cup E(C)$, and at least $a$ edges of $E(C)-\{e_i\}$ with equality only if $u_2 \in e_i$ by Claim~\ref{allneighbors}. Moreover, each of these edges is of distance at least $k-a$ from $e_i$.

Let $B_{i}$ be the set of edges of $E(C)$ containing $u_{i}$ for $i \in\{2,\ell+1\}$. Observe that $|B_i| \geq a+1$. Let $t = |B_2 \cap B_{\ell+1}|$. If $ t = 0$, let $e_\alpha, e_\beta, e_\gamma$ be edges such that $\alpha< \beta < \gamma$ (modulo $c$), $e_\alpha, e_\gamma \in B_{\ell+1}$, and $e_\beta \in B_{2}$. 
Then the segment from $e_\alpha$ to $e_{\gamma}$ in $C$ contains at least $2(k-a-1)$ edges not in $B_2 \cup B_{\ell+1}$ by Claim~\ref{shortpath}. We get
\[2k-1 \geq |E(C)|\geq |B_2| + |B_{\ell+1}| + 2(k-a-1) \geq 2(a+1) + 2(k-a-1) = 2k,\] a contradiction.

Now suppose $1\leq t \leq |B_2|$. Then surrounding each edge in $B_2 \cap B_{\ell+1}$ there are two intervals of $2(k-a-1)$ edges that are disjoint from $B_2 \cup B_{\ell+1}$. Moreover if there exists $e_\alpha \in B_2 -B_{\ell+1}$ and $e_\beta \in B_{\ell+1} - B_2$, then each pair of vertices in $(B_2 \cap B_{\ell+1}) \cup \{e_\alpha,e_\beta\}$ has distance at least $k-a$. In this case, there are at least $t+2$ intervals of $(k-a-1)$ edges not in $B_2 \cup B_{\ell+1}$.
  Therefore \[2k-1 \geq |E(C)| \geq |B_2 \cup B_{\ell+1}| + (t+2)(k-a-1)\geq 2(a+1)-t + (t+2)(k-a-1) = t(k-a-2) + 2k \geq 2k,\]
  a contradiction.  If $B_2 \subsetneq B_{\ell+1}$ or vice versa, then we have $t \geq |B_2| \geq a+1$. As before, for any $e_\beta \in B_{\ell+1} - B_2$, each pair of edges in $(B_2 \cap B_{\ell+1}) \cup \{e_\beta\}$ has distance at least $k-a$. So instead we get \[2k-1 \geq |B_{\ell+1} - B_2| + t + (t+1)(k-a-1) \geq 1+t + (t+1)(k-a-1) = (t+1)(k-a) \geq (a+2)(k-a).\]  But this does not hold when $a \geq 1$, $k \geq 3$, and $k-a \geq 2$.  
  
   The last case is $B_2 = B_{\ell+1}$. If $t \geq a+2$, then $2k-1 \geq t(k-a-1) + t \geq (a+2) (k-a)$, a contradiction again. So we consider the case where $t = |B_2| = |B_{\ell+1}| = a+1$. Because $B_{\ell+1}$ must contain $a$ edges within the at most $2a$ edges of $\{e_{k-a}, \ldots, e_{c-(k-a)}\}$ we must have $\ell=k-a= 2$ by Claim~\ref{shortpath}. Without loss of generality, we may assume that $B_2 = B_{\ell+1} = \{e_c, e_2, e_4, \ldots, e_{2k-4}\}$. We also have $r = |e_c| \geq |\{v_c, v_1, u_2, u_{\ell+1}\}| = 4$.

Suppose the edge $f_2 \in E(P)$ contains a vertex $v_i \in V(C)$. By Claim~\ref{bigsmallcycle}, $e_{i-1}, e_i$ cannot contain $u_{\ell+1}$. So we must have that $c=2k-1$ and $i=2k-2$. 
Therefore $f_2$ contains at least $r-1$ vertices outside of $V(C)$. As $\ell=2$, $|(f_2\cap V(P))-V(C)|\leq 2$.
So, since $r = |e_c| \geq 4$, there exists  $u \in f_2$ with $u \notin V(C) \cup V(P)$. 

By Claim~\ref{shortpath}, $u$ cannot belong to $e_i$ if $e_{i-1}\in B_2$ or $e_{i+1}\in B_2$. Hence
$\{e_i\in E(C): u\in e_i\}\subseteq B_2$. If an edge $e_i\in B_2$ contains all vertices in $f_2-V(C)$, then
$|e_i|\geq 2+|f_2|-1=r+1$, a contradiction. 
 Therefore some $u\in f_2$ is contained in at most $(|B_2|-1)+1=k-1$ edges of $E(C) \cup E(P)$, and hence
 $d_{H'}(u)\geq \delta(H)-(k-1) \geq 1$. 
  Say $u \in e \in E(H')$. If there exists $w \notin V(C)$ in $e$, then either $(C, e_c, u_2, f_2, u, e, w)$ or $(C, e_c, u_3, f_2, u, e, w)$ is a better lollipop. than $(C,P)$. So $e$ must contain $r-1$ vertices in $V(C)$. Without loss of generality, $v_i \in e$ and $e_i \in B_2$. Then replacing the segment $v_i,e_i, v_{i+1}$ in $C$ with $v_i, e, u, f_2, u_2, e_{i}, v_{i+1}$ yields a  cycle longer than $C$. 
\end{proof}

By applying Claim~\ref{allneighbors} and using that $\ell \geq k$, $c < 2k$, we obtain the following corollary.

   \begin{cor}\label{noC}If $(C,P)$ is a p-lollipop, then the only edge of $C$ that may contain $u_{\ell+1}$ is $e_c$.
   \end{cor}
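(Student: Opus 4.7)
The plan is to combine Lemma~\ref{partk} with Claim~\ref{bigl2}(2) directly. Claim~\ref{bigl2}(2) tells us that in a p-lollipop $u_{\ell+1}$ is contained in no edge of the set $\{e_1,\ldots,e_{\ell-1}\}\cup\{e_{c-1},\ldots,e_{c-(\ell-1)}\}$, so the only edges of $C$ that could possibly contain $u_{\ell+1}$ are those indexed by $\{1,\ldots,c\}\setminus\big(\{1,\ldots,\ell-1\}\cup\{c-\ell+1,\ldots,c-1\}\big)$.

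The main step is to verify that under the standing hypothesis $c<\min\{2k,n\}$ and the conclusion $\ell\geq k$ of Lemma~\ref{partk}, the two forbidden index ranges in fact cover $\{1,2,\ldots,c-1\}$, leaving only the index $c$. Indeed, from $c\leq 2k-1$ and $\ell\geq k$ we obtain $c\leq 2\ell-1$, hence $c-\ell+1\leq\ell$, which is precisely the condition that $\{1,\ldots,\ell-1\}$ and $\{c-\ell+1,\ldots,c-1\}$ together cover every index strictly between $0$ and $c$. Thus the only edge of $C$ whose index is outside the forbidden set is $e_c$, as claimed.

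There is essentially no obstacle here; this is a short index-counting deduction. The only thing to double-check is the boundary case where $c=2\ell-1$: then $c-\ell+1=\ell$ and $\{1,\ldots,\ell-1\}\cup\{\ell,\ldots,c-1\}=\{1,\ldots,c-1\}$, so the union still excludes only $e_c$. All other cases give even more overlap between the two forbidden ranges, so the conclusion only becomes stronger.
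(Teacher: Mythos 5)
Your derivation is correct and is exactly the argument the paper intends (the paper states only "By applying Claim~\ref{bigl2} we obtain the following corollary," leaving the index computation implicit). Combining $\ell\geq k$ from Lemma~\ref{partk} with the standing assumption $c\leq 2k-1$ to get $c\leq 2\ell-1$, so that the two forbidden ranges of Claim~\ref{bigl2}(2) cover all of $\{1,\dots,c-1\}$, is precisely the intended reasoning.
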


\section{The paths in lollipops are short}
In this section we show that $P$ cannot be too long (namely, $\ell \leq k-2$). Our first step will be to show that if $u_{\ell+1}$ has $H'$-neighbors in $C$ and $P$ is long, then we can  find a better cycle than $C$. Then we apply Lemma~\ref{BL2} to analyze the case where all $H'$-neighbors of $u_{\ell+1}$ are in $P$. As a result of   Lemma~\ref{partk} and the lemma below, we obtain that $(C,P)$ is an $o$-lollipop.

 \begin{lemma}\label{bonl} If $k\geq r+2\geq 5$, then $\ell\leq k-2$.
 \end{lemma}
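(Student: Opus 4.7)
The plan is to suppose for contradiction that $\ell \ge k-1$ and obtain a degree contradiction at $u_{\ell+1}$. By Lemma~\ref{partk} we may further assume $\ell\ge k$ whenever $(C,P)$ is a $p$-lollipop; in the $o$-case we only have $\ell\ge k-1$.

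My first task is to localise the $H'$-neighborhood of $u_{\ell+1}$. Claim~\ref{allneighbors} already places $N_{H'}(u_{\ell+1})$ inside $V(C)\cup V(P)$. For $\ell\ge k-1$ and $c\le 2k-1$, the forbidden sets in Claim~\ref{allneighbors}(1)/(2) exhaust $V(C)$ apart from $v_c=u_1$, and $v_c\in V(P)$, so $N_{H'}(u_{\ell+1})\subseteq V(P)$. Lemma~\ref{BL2}(B) then gives $|N_{H'}(u_{\ell+1})|\le|S_1|\le k-2$, so there are at most $\binom{k-2}{r-1}$ edges of $H'$ through $u_{\ell+1}$. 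For the $C$-edges through $u_{\ell+1}$, Claim~\ref{bigl2}(1) (o-case) with $c-2\ell\le 1$ allows at most one such edge (only $e_{k-1}$, and only when $c=2k-1$ and $\ell=k-1$), while Corollary~\ref{noC} gives the same bound in the p-case (only $e_c$). Finally, Lemma~\ref{BL2}(A) gives at most $|S_1\cup S_2|\le k-1$ path-edges through $u_{\ell+1}$.

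Summing these three contributions yields
\[
d_H(u_{\ell+1})\le \binom{k-2}{r-1}+(k-1)+1=\binom{k-2}{r-1}+k,
\]
and combining with the hypothesis $d_H(u_{\ell+1})\ge\binom{k-1}{r-1}+1$ and Pascal's identity gives $\binom{k-2}{r-2}\le k-1$. For $r\ge 4$ and $k\ge r+2\ge 6$, $\binom{k-2}{r-2}\ge\binom{k-2}{2}=(k-2)(k-3)/2> k-1$, a contradiction.

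The case $r=3$ is near-tight (since $\binom{k-2}{1}=k-2<k-1$), so extra work is needed. Here the equality-forcing analysis requires $B_P\ge k-2$, so most path-edges must go through $u_{\ell+1}$, and since $r=3$ each such path-edge must equal $\{u_m,u_{m+1},u_{\ell+1}\}$. Claim~\ref{bestp}(A) then lets us reverse the tail at any such $m$, making each $u_{m+1}$ the final vertex of another best lollipop and propagating the same tight constraints (i.e.\ the bounds of Steps~1--3 above) to $u_{m+1}$. Combined with Claim~\ref{shortpath} (to control the interaction of the $H''$-edges with $C$) and Claim~\ref{bigsmallcycle} (to preclude small shortcut-cycles), this rigid configuration either forces an edge of $H''$ that can be used to build a cycle longer than $C$, or produces a lollipop better than $(C,P)$. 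The main difficulty is precisely this $r=3$ near-tight regime: the pure degree count is off by one, so the argument must exploit the reversal symmetry of Claim~\ref{bestp} together with the sharp localisation of $N_{H'}(u_{\ell+1})$ to finish the case.
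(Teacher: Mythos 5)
Your setup and the $r\ge 4$ case are correct and essentially identical to the paper's: localize $N_{H'}(u_{\ell+1})$ to $V(P)$ (the paper does this by a direct long-cycle argument rather than by invoking Claim~\ref{allneighbors}, but the coverage computation $c\le 2k-1\le 2\ell+1$ is the same), bound the $H'$-degree by $\binom{k-2}{r-1}$ via Lemma~\ref{BL2}(B), bound the cycle-edge contribution by $1$ via Claim~\ref{bigl2}/Corollary~\ref{noC}, bound the path-edge contribution by $k-1$ via Lemma~\ref{BL2}(A), and conclude $\binom{k-2}{r-2}\le k-1$, which kills $r\ge 4$.

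However, the $r=3$ case is a genuine gap, not a proof. You correctly observe that the count is off by exactly one and that equality forces a rigid configuration, but you never execute the argument: "this rigid configuration either forces an edge of $H''$ that can be used to build a cycle longer than $C$, or produces a lollipop better than $(C,P)$" is a statement of intent, not a derivation. The paper needs roughly a page here: it first forces $|N_{H'}(u_{\ell+1})|=k-2$ with $u_\ell\in N_{H'}(u_{\ell+1})$, then uses the tail-reversal of Claim~\ref{bestp}(A) to rule out $b_{\ell+1}>0$, then extracts the equality conditions (all $k-1$ path edges contain $u_{\ell+1}$, none is contained in $N_{H'}[u_{\ell+1}]$, and every pair of $H'$-neighbors spans an $H'$-edge with $u_{\ell+1}$), and finally derives a contradiction by swapping $f_\ell$ for an $H'$-edge $g\subseteq N_{H'}(u_{\ell+1})-\{u_1\}$ and analyzing the third vertex $u$ of $f_\ell$ according to whether $u\in V(C)$, $u\notin V(C)\cup V(P)$, or $u\in V(P)-N_{H'}[u_{\ell+1}]$ (the last subcase inflating $|N_{H'}(u_{\ell+1})|$ to $k-1$ for the new best lollipop, contradicting Lemma~\ref{BL2}(B)). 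Your sketch also glosses over the one path edge for which "$f_m=\{u_m,u_{m+1},u_{\ell+1}\}$" fails, namely $f_\ell$, whose third vertex is exactly where the paper's contradiction lives. As it stands, the hardest part of the lemma remains unproved.
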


\begin{proof} Suppose $\ell\geq k-1$ and recall that by Lemma~\ref{partk} we have equality only if $(C,P)$ is an $o$-lollipop.

{\bf Case 1.}  Some $h\in E(H')$ contains $u_{\ell+1}$ and some $v_i\in V(C)-v_c$. By symmetry we may assume $i\leq c/2$ when $(C,P)$ is an o-lollipop and $i\leq (c+1)/2$ when $(C,P)$ is a p-lollipop. Consider the cycle
$$C_1=v_i, C[v_i,v_c],v_c,P,u_{\ell+1},h,v_i.$$
If $(C,P)$ is an o-lollipop, then $C_1$
has at least $(c-(c-2)/2)+k-1=c+\frac{2k-2-c+2}{2}>c$ vertices, a contradiction. 
If $(C,P)$ is a p-lollipop, then $C_1$
has at least $(c-(c-1)/2)+k=c+\frac{2k-c+1}{2}>c$ vertices,
 a contradiction again. This finishes Case 1.

 \medskip
 
 For $2\leq m\leq \ell+1$, let $B_m=\{e_j\in E(C): u_m\in e_j\}$ and $b_m=|B_m|$.
By Claim~\ref{allneighbors} and Corollary~\ref{noC}, 
\begin{equation}\label{bp}
\parbox{14cm}{\em if $b_{\ell+1}>0$,  then either 
$\ell=k-1$,
$(C,P)$ is an $o$-lollipop and $B_{\ell+1}=\{e_{k-1}\}$, or $(C,P)$ is a p-lollipop and $B_{\ell+1} = \{e_c\}$.}
\end{equation}

  Let $F=\{f_m\in E(P)-\{e_c\}:u_{\ell+1}\in f_m\}$. By Lemma~\ref{BL2}(A), $|F|\leq k-1$.

 \medskip
{\bf Case 2.} $N_{H'}(u_{\ell+1})\subset V(P)$. 
%
 By~\eqref{bp}, Lemma~\ref{BL2}(B) and the fact that $|F|\leq k-1$,
\begin{equation}\label{lower}
1+{k-1\choose r-1}\leq d(u_{\ell+1})\leq {|N_{H'}(u_{\ell+1})|\choose r-1}+|F|+b_{\ell+1}\leq 
{k-2\choose r-1}+(k-1)+1.
\end{equation}
For $r \geq 4$, regrouping, we get
$$k-1\geq {k-2\choose r-2}\geq  {k-2\choose 2}=\frac{(k-2)(k-3)}{2},$$
 yielding $k^2-7k+8\leq 0$, which is not true for $k\geq 6$. 
 This settles the case $r\geq 4$.
 
 So suppose $r = 3$. In particular, if $(C,P)$ is a p-lollipop, then $u_{\ell+1} \notin e_c = \{v_c, v_1, u_2\}$. Thus $b_{\ell+1} >0$ only if $(C,P)$ is an o-lollipop. If $|N_{H'}(u_{\ell+1})| \leq k-3$, then \[d(u_{\ell+1}) \leq {k-3 \choose 2} + k = {k-3 \choose 2} + {k-3 \choose 1} + 3 = {k-2 \choose 2} + 3 \leq {k-2 \choose 2} + {k-2 \choose 1} = {k-1 \choose 2},\]
 a contradiction.
 
 Hence  by Lemma~\ref{BL2}(B),   $|N_{H'}(u_{\ell+1})| = k-2$,       $u_\ell\in N_{H'}(u_{\ell+1})$, and
 $|S_1 \cup S_2| \leq k-1$. If $|F|+b_{\ell+1}\leq k-2$, then the RHS of~\eqref{lower} is at most ${k-1\choose r-1}$;
 so suppose $|F|+b_{\ell+1}\geq k-1$.
 
 By Claim~\ref{bestp}(A), for every $f_m\in F\setminus \{f_\ell\}$ the lollipop  $(C,P_m)$ where  $P_m$ is obtained from $P$ by replacing the subpath 
$u_m,f_m,u_{m+1},\ldots,u_{\ell+1}$ with the subpath $u_m,f_m,u_{\ell+1},f_\ell,u_\ell,\ldots,u_{m+1}$ also is a best lollipop.
Since $|F|\geq k-1-b_{\ell+1}\geq (r+2)-1-1= 3$ and  $ e_{k-1}$ may contain only one vertex of $P$, for some
$f_m\in F$,
 $u_{m+1}\notin e_{k-1}$ and hence by~\eqref{bp} $u_{m+1}$ does not belong to any edge of $C$. So we may assume that $b_{\ell+1}=0$. 
Then in view of~\eqref{lower}, if  $d(u_{\ell+1})\geq 1+{k-1\choose 2}$, then
\begin{equation}\label{allt}
\parbox{14cm}{\em 
 $|F|=k-1$, each $f_m \in F$ is not contained in $N_{H'}[u_{\ell+1}]$, and any two vertices in 
$N_{H'}(u_{\ell+1})$ form an edge of $H'$ together with $u_{\ell+1}$. }
\end{equation}
So, since $u_\ell\in N_{H'}[u_{\ell+1}]$,
$f_\ell\not\subset N_{H'}[u_{\ell+1}]$, but there is $g\in E(H')$ such that $\{u_{\ell},u_{\ell+1}\}\subset g$. 
Moreover, since $|N_{H'}(u_{\ell+1})| = k-2\geq r=3$, we can choose $g\subseteq N_{H'}(u_{\ell+1})-\{u_1\}$ and
\begin{equation}\label{bdeg}
\mbox{\em each vertex in $N_{H'}(u_{\ell+1})$ belongs to at least two edges of $H'$.}
\end{equation}
Then for $P'$ obtained from $P$ by replacing $f_\ell$ with $g$, the pair $(C,P')$ also is a best lollipop.

Suppose
$f_\ell=\{u_{\ell},u_{\ell+1},u\}$. By~\eqref{allt}, $u\notin N_{H'}[u_{\ell+1}]$. If $u\in V(C)-V(P)$, then we have Case 1 
for $(C,P')$, a contradiction.
 If $u\notin V(C)\cup V(P)$, then we can extend $P'$ by adding edge $f_\ell$ and vertex $u$.
 So, $u\in V(P)-N_{H'}[u_{\ell+1}]$. But then in view of~\eqref{bdeg}, the size of $N_{H'}(u_{\ell+1})$ corresponding to $(C,P')$ will be  $k-1$ because of the new vertex $u$, a contradiction.
  \end{proof}

Lemma~\ref{bonl} together with~Lemma~\ref{partk} yield

 \begin{cor}\label{bon3}  $(C,P)$ is an $o$-lollipop.
 \end{cor}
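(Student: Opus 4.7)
The plan is essentially a one-line contradiction argument. Suppose for contradiction that the best lollipop $(C,P)$ is a $p$-lollipop. Then Lemma~\ref{partk} applies and forces $\ell \geq k$. On the other hand, Lemma~\ref{bonl} (which holds unconditionally for the best lollipop under the standing assumption $k \geq r+2 \geq 5$) gives $\ell \leq k-2$. These two inequalities together yield $k \leq \ell \leq k-2$, which is absurd.

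Therefore $(C,P)$ cannot be a $p$-lollipop, and since by definition every lollipop is either an $o$-lollipop or a $p$-lollipop, $(C,P)$ must be an $o$-lollipop. There is no genuine obstacle here: the work has already been absorbed into the two previous lemmas, and the corollary is simply the numerical incompatibility of their conclusions for the $p$-lollipop case. The one small thing to double-check is that Lemma~\ref{bonl} indeed applies to both types of lollipops (its statement makes no distinction and its proof treats the $o$- and $p$-cases in parallel), so the contradiction is legitimate without any extra case analysis.
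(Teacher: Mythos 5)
Your argument is correct and is exactly the paper's: Lemma~\ref{partk} forces $\ell \geq k$ for a best $p$-lollipop, while Lemma~\ref{bonl} gives $\ell \leq k-2$ unconditionally, so the best lollipop must be ordinary. Nothing further is needed.
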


\section{Finishing proof of Theorem~\ref{mainthm}}

In this section we complete the proof of Theorem~\ref{mainthm}. One notable part of this section is that we construct another optimal lollipop in which the vertex $u_2$ plays the role of $u_{\ell+1}$. We consider the $H'$-neighborhoods of both $u_2$ and $u_{\ell+1}$ as well as the edges in $C$ containing these vertices, and we analyze how these sets can interact. We conclude that $u_2$ and $u_{\ell+1}$ cannot both have degree more than ${k-1 \choose r-1}$ without creating a cycle longer than $C$.

\bigskip

By Lemmas~\ref{ell3} and~\ref{bonl}, 
 $2 \leq \ell \leq k-2$. Corollary~\ref{bon3} gives that $(C,P)$ is an  o-lollipop.  The following lemma will be useful for bounding the size of $N_{H'}(u_{\ell+1})$.

\begin{lemma}\label{hamp2}
Let $ s+1 \geq b \geq 0$. Let $Q =v_0,v_1, \ldots, v_{s+1}$ be a graph path, and $I$ be a
non-empty independent subset of $\{v_1,\ldots, v_s\}$. If $B$ is a set of $b$ edges of $Q$ such that no edge in $B$
contains  any vertex in $I$, then $|I| \leq  \left\lceil\frac{ s-b}{2}\right\rceil$.

Moreover if $s-b$ is odd and $|I| = \frac{ s-b+1}{2}$, then for every $1\leq i \leq s$,  $v_i \in I$, or $e_i\in B$, or $e_{i-1} \in B$,  or $\{v_{i-1}, v_{i+1}\}  \subseteq I.$
\end{lemma}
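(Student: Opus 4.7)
The plan is to prove the bound by a straightforward double-counting argument on the edge set of $Q$, and then read off the equality structure. Write $E(Q) = \{e_0,e_1,\ldots,e_s\}$ where $e_i = v_iv_{i+1}$, so $|E(Q)| = s+1$. For each $v_i \in I$ (with $1 \leq i \leq s$), the two edges of $Q$ containing $v_i$ are $e_{i-1}$ and $e_i$, and these are distinct. The key observation is that because $I$ is independent in $Q$, any two distinct $v_i, v_j \in I$ satisfy $|i-j|\geq 2$, so the pairs $\{e_{i-1},e_i\}$ and $\{e_{j-1},e_j\}$ are disjoint. Consequently the set $E_I := \{e\in E(Q) : e\cap I \neq \emptyset\}$ has size exactly $2|I|$. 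Since $B$ is disjoint from $E_I$ by hypothesis, $2|I| + b \leq s+1$, i.e., $|I| \leq (s+1-b)/2$.

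To convert this into the target bound, I would split on parity: if $s-b$ is odd then $(s+1-b)/2 = \lceil(s-b)/2\rceil$, so we are done; if $s-b$ is even then $(s+1-b)/2$ is a half-integer, and integrality of $|I|$ forces $|I| \leq (s-b)/2 = \lceil(s-b)/2\rceil$. For the ``moreover'' part, suppose $s-b$ is odd and $|I| = (s+1-b)/2$. Then $2|I| + b = s+1$, so $B$ and $E_I$ together partition $E(Q)$. Fix $i\in\{1,\ldots,s\}$ and suppose the first three alternatives all fail, i.e., $v_i \notin I$, $e_{i-1}\notin B$, and $e_i\notin B$. Then both $e_{i-1}$ and $e_i$ must lie in $E_I$; since $v_i\notin I$, this forces $v_{i-1}\in I$ (from $e_{i-1}$) and $v_{i+1}\in I$ (from $e_i$), giving the fourth alternative $\{v_{i-1},v_{i+1}\} \subseteq I$.

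There is no real obstacle: the whole statement reduces to the edge-count $|E_I| = 2|I|$, which in turn is the content of independence. The mild care needed is in the parity translation from $(s+1-b)/2$ to $\lceil(s-b)/2\rceil$ and in verifying that the ``moreover'' conclusion does include the boundary indices $i=1$ and $i=s$ correctly: there the fourth alternative cannot hold (since $v_0,v_{s+1}\notin I$), but in those cases the tightness argument still forces one of the first three alternatives, so the statement remains vacuously satisfied by one of the other options.
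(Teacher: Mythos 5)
Your proof is correct, but it takes a genuinely different route from the paper's. The paper proves the bound by iteratively contracting the $b$ edges of $B$ (legitimate precisely because no edge of $B$ meets $I$), reducing to the ``trivial'' $b=0$ statement that an independent subset of the interior of a path on $s-b+2$ vertices has at most $\left\lceil\frac{s-b}{2}\right\rceil$ elements; for the equality case it derives a contradiction by performing one further contraction of an edge $v_iv_{i+1}$ with $v_i,v_{i+1}\notin I$ and reapplying the inequality. You instead double-count edges: by independence each vertex of $I$ owns a private pair of edges of $Q$, so $2|I|+b\leq s+1$, and the ceiling bound follows from the parity split. What your version buys is that the equality analysis becomes immediate: tightness forces $E(Q)$ to be partitioned into $B$ and the edges meeting $I$, from which the four-way alternative is read off directly (and your handling of the boundary indices $i=1$ and $i=s$, where the fourth alternative is unavailable and hence one of the first three is forced, is correct), whereas the paper's contraction argument for the ``moreover'' part needs a WLOG choice and a second invocation of the bound. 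Both arguments are elementary and of comparable length.
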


\begin{proof} The claim is trivial if $b=0$ so assume $b>0$. 
 Iteratively contract all $b$ edges of $B$, say $Q'=v_0', v_1', \ldots, v'_{s+1-b}$ is the new path obtained. Observe that since $I$ was disjoint from the edges in $B$, after contraction $I$ is still an independent set in $Q'$ such that $I \subseteq \{v_1', \ldots, v_{s-b}'\}$. Therefore $|I| \leq \lceil \frac {s-b}{2} \rceil$. 
 
Now suppose $s-b$ is odd, $|I| = \left\lceil\frac{ s-b}{2}\right\rceil$, and for some $i$, $e_i, e_{i-1} \notin B$.
If without loss of generality $v_{i+1} \notin I$, then we contract the edge $v_iv_{i+1}$ and apply the result to the new path, $I$, and $B$ to obtain $I \leq \lceil \frac{s-1-b}{2} \rceil  < \lceil \frac{s-b}{2} \rceil$.
\end{proof}

For $i\in \{2,\ell+1\}$, let $A_i = N_{H''}(u_i) \cap V(C)$ and $B_i$ be the set of  edges in $E(C)$ containing $u_i$. Also, let $a_i = |A_i|$ and $b_i = |B_i|$. Let $F=\{f_m: u_{\ell+1}\in f_m\}$. We will heavily use the fact that
\begin{equation}\label{basic}
1+{k-1\choose r-1}\leq d_H(u_{\ell+1})=b_{\ell+1}+|F|+d_{H'}(u_{\ell+1}).  
\end{equation}


 \begin{claim}\label{bon2} If  $\ell\geq 2$, then some edge $e\in E(H'')$
 containing $u_{\ell+1}$ intersects $C$. \end{claim}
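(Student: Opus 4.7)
The plan is an argument by contradiction: I assume no edge of $H''$ incident to $u_{\ell+1}$ meets $V(C)$, then bound $d_H(u_{\ell+1})$ from above and contradict~\eqref{main}. From Claim~\ref{allneighbors}, $N_{H'}(u_{\ell+1})\subseteq V(C)\cup V(P)$; the contradiction hypothesis forces $N_{H'}(u_{\ell+1})\subseteq V(P)$, and since $u_1=v_c\in V(C)$ this tightens to $N_{H'}(u_{\ell+1})\subseteq\{u_2,\ldots,u_\ell\}$, whence $d_{H'}(u_{\ell+1})\leq{\ell-1\choose r-1}$. Moreover $f_1\in E(H'')$ already contains $v_c\in V(C)$, so the hypothesis gives $u_{\ell+1}\notin f_1$, which combined with $F\subseteq E(P)$ yields $|F|\leq\ell-1$.

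For the $E(C)$-contribution, Claim~\ref{bigl2}(1) excludes $u_{\ell+1}$ from the $2\ell$ edges $\{e_1,\ldots,e_{\ell-1}\}\cup\{e_c,e_{c-1},\ldots,e_{c-\ell}\}$; together with $c\leq 2k-1$ this yields $b_{\ell+1}\leq\max\{0,\,2k-1-2\ell\}$. Plugging the three bounds into~\eqref{basic} and rearranging produces the target inequality
\[
{k-1\choose r-1}-{\ell-1\choose r-1}\ \leq\ 2k-3-\ell, \qquad (\ast)
\]
which I now aim to contradict using $3\leq r\leq k-2$ and $2\leq\ell\leq k-2$ (the latter coming from Lemmas~\ref{ell3} and~\ref{bonl}).

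Let $g(\ell)$ denote LHS minus RHS of $(\ast)$. The discrete difference $g(\ell+1)-g(\ell)=1-{\ell-1\choose r-2}$ shows that $g$ is non-increasing on $[r-1,k-2]$ and (when $r\geq 4$) increasing on $[2,r-1]$, so it suffices to verify $g(2)>0$ and $g(k-2)>0$. The case $\ell=2$ reduces, via the unimodality bound ${k-1\choose r-1}\geq{k-1\choose 2}$ valid for $2\leq r-1\leq k-3$, to $(k-3)(k-4)/2>0$. The case $\ell=k-2$ reduces, via Pascal's identity ${k-1\choose r-1}-{k-3\choose r-1}={k-2\choose r-2}+{k-3\choose r-2}$ combined with the unimodality bound ${n\choose j}\geq n$ for $1\leq j\leq n-1$, to $(k-2)+(k-3)-(k-1)=k-4>0$. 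Both strict inequalities hold since $k\geq r+2\geq 5$, contradicting $(\ast)$.

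The main obstacle is the last step: balancing the binomial ${\ell-1\choose r-1}$ against the linear term $2k-3-\ell$ uniformly over all admissible $\ell$ and $r$. The monotonicity of $g$ is what lets me avoid a case split by $r$ and reduce to the two endpoints $\ell\in\{2,k-2\}$, where Pascal's identity and the standard unimodality bound finish things off cleanly; no additional path-rerouting beyond what is already packaged in Claims~\ref{allneighbors} and~\ref{bigl2} is needed.
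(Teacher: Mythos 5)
Your proposal is correct and follows essentially the same route as the paper: both reduce to the bound $1+\binom{k-1}{r-1}\leq\binom{\ell-1}{r-1}+(c-2\ell)+(\ell-1)$ via Claims~\ref{allneighbors} and~\ref{bigl2}, and both dispose of it by a monotonicity-in-$\ell$ argument that comes down to the same two endpoint checks ($\ell=2$ giving $(k-3)(k-4)>0$, and $\ell=k-2$ via Pascal's identity). The only cosmetic difference is that the paper splits into the cases $\ell\geq r-1$ and $\binom{\ell-1}{r-1}=0$ rather than tracking the unimodality of your $g(\ell)$ directly.
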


\begin{proof} 
  
  If the claim fails,  then $|N_{H''}(u_{\ell+1})| \leq |V(P) - V(C)| = \ell-1$ and $f_1\notin F$. 
  Hence using  Claim~\ref{allneighbors}, 
  \begin{equation}\label{degn}
   1+{k-1\choose r-1}\leq d_H(u_{\ell+1})\leq {|N_{H'}(u_{\ell+1})| \choose r-1} + (c-2\ell) + \ell-1 \leq {\ell-1\choose r-1}+c-\ell-1.
   \end{equation}
 Since $\ell\leq k-2$ and the function $h(\ell):= {\ell-1\choose r-1}+c-\ell-1$ does not decrease for integers $\ell\geq r-1$,
 in the range $r-1\leq \ell\leq k-2$,~\eqref{degn} gives
 $$1+{k-1\choose r-1}\leq {(k-2)-1\choose r-1}+(2k-1)-(k-2)-1={k-3\choose r-1}+k,
  $$
 which is not true for $k\geq r+2\geq 5$. 

Otherwise  ${\ell-1\choose r-1}=0$, so~\eqref{degn} yields $1+{k-1\choose 2}\leq (2k-1)-2-1$, which is not true for $k\geq 5$.
 \end{proof}

Fix an edge $f_0\in E(H'')$ containing $u_{\ell+1}$ and some $v_j\in V(C)$ provided
by Claim~\ref{bon2} (see Figure~\ref{broken}). Possibly, $f_0=f_1$.
Consider path $P' = v_j, f_0, u_{\ell+1}, f_\ell, \dots, f_2, u_2$. Since $V(P)-V(C) = V(P')-V(C)$ and $f_1 \nsubseteq V(P)-V(C)$, $(C, P')$ is also a best lollipop. Thus many arguments we apply to $u_{\ell+1}$ will also apply symmetrically to $u_2$. 

Let $F'=\{f_m: m\in \{0,2,3,\ldots,\ell\}\;\mbox{\em and}\; u_{2}\in f_m\}$. 
 If $r=3$, then for $\ell \geq 3$, not all of $f_1,f_2$ and $f_3$ contain  $\{u_2,u_{\ell+1}\}$. So, we may assume
\begin{equation}\label{d27}
\mbox{\em if $r=3$, then $|F|\leq \max\{2,\ell-1\}$.}
\end{equation}

 \begin{figure}
 \centering
 \includegraphics[scale=.6]{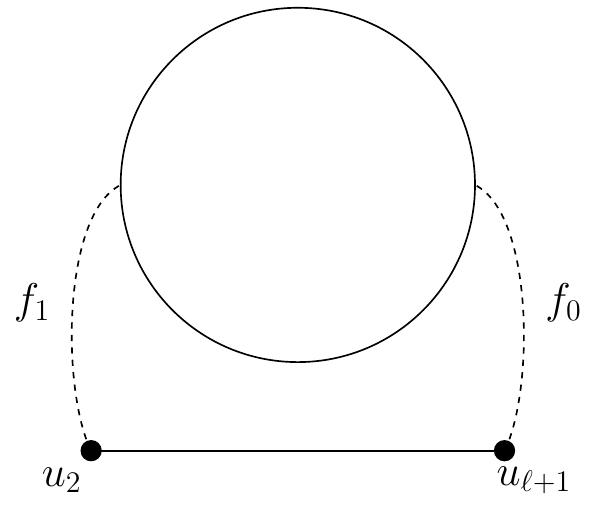}
 \caption{An o-lollipop $(C,P)$ with edge $f_0$ containing $u_{\ell+1}$.}
 \label{broken}
 \end{figure}

\begin{claim}\label{a2al+1}
$A_{\ell+1}=A_2$.
\end{claim}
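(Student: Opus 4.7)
The plan is to prove $A_{\ell+1}\subseteq A_2$; the reverse inclusion follows at once from the symmetry recorded immediately before the claim: for the reversed path $P'=v_j,f_0,u_{\ell+1},f_\ell,\ldots,f_2,u_2$ the lollipop $(C,P')$ is also best and $H''_{(C,P')}=H''$, and rerunning the forward direction on $(C,P')$ yields $A_2\subseteq A_{\ell+1}$.

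Fix $v_i\in A_{\ell+1}$ with a witness $g\in E(H'')$ satisfying $\{u_{\ell+1},v_i\}\subseteq g$. First dispose of two easy cases: if $g=f_1$ then $u_2,v_i\in f_1\in H''$ gives $v_i\in A_2$, and if $u_2\in g$ then $g$ itself witnesses $v_i\in A_2$. Otherwise $g\in H'\setminus\{f_1\}$ and $u_2\notin g$. Form the rotated path $P^*=v_i,g,u_{\ell+1},f_\ell,\ldots,f_2,u_2$. The pair $(C,P^*)$ ties $(C,P)$ on rules (a)--(e) of the best-lollipop ordering: $|V(C)|$, $|V(P^*)|=\ell+1$, $|E(P^*)|=\ell$, $V(P^*)-V(C)=V(P)-V(C)$, the o-lollipop type, and the count in (e) (neither $g$ nor $f_1$ is contained in $V(P)-V(C)$) all coincide. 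Hence $(C,P^*)$ is best, and a short bookkeeping check gives $H''_{(C,P^*)}=H''$.

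Now work inside $(C,P^*)$, where $u_2$ is the endpoint and $v_i$ plays the role of $v_c$. Claim~\ref{allneighbors}(1) applied to $(C,P^*)$ forbids $\wt H_{(C,P^*)}=H''\setminus\{g\}$-neighbors of $u_2$ among the $2\ell$ vertices of $C$ within distance $\ell$ of $v_i$; since $u_2\notin g$, this strengthens to $A_2\subseteq\{v_i\}\cup\{v_{i+\ell+1},\ldots,v_{i-\ell-1}\}$. Suppose for contradiction that $v_i\notin A_2$. Then $A_2\subseteq \{v_{i+\ell+1},\ldots,v_{i-\ell-1}\}$, one vertex shorter than allowed by Claim~\ref{allneighbors}. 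Apply the degree identity~\eqref{basic} to $u_2$ in $(C,P^*)$, namely $d_H(u_2)=b_2+|F^*|+d_{H'_{(C,P^*)}}(u_2)$, bounding $b_2\leq c-2\ell$ by Claim~\ref{bigl2}(1), $|F^*|\leq k-1$ by Lemma~\ref{BL2}(A), and $d_{H'_{(C,P^*)}}(u_2)$ using the Claim~\ref{allneighbors}(1) constraint together with Lemma~\ref{BL2}(B). The hypothesis $v_i\notin A_2$ removes precisely one potential $H''$-neighbor of $u_2$ in $V(C)$ compared with the count that the proof of Claim~\ref{bon2} used for $u_{\ell+1}$, and this saving pushes $d_H(u_2)$ strictly below $\binom{k-1}{r-1}+1$, contradicting~\eqref{main}.

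The main obstacle is executing the degree count of the preceding paragraph. The analogs of $B_2$, $F$, and the bounds of Lemmas~\ref{BL2} and~\ref{bigl2} transfer to $u_2$ in $(C,P^*)$ because those results hold for the endpoint of any best o-lollipop, but one must carefully track the role swap between $g$ (now the special first edge that rejoins $H''$) and $f_1$ (now a plain $H'_{(C,P^*)}$-edge) and confirm that the one unit saved by $v_i\notin A_2$ is neither double-counted nor swallowed by an excess term elsewhere. This balancing is delicate because~\eqref{main} is tight for the extremal examples $H_1,H_2$, so the argument cannot afford any slack.
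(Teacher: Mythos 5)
Your setup is sound and is essentially the paper's argument read in mirror image: the paper takes a vertex $w\in A_2\setminus A_{\ell+1}$, renames $C$ and the edges of $H''$ so that $w=v_c$ and $f_1\supseteq\{v_c,u_2\}$, and runs the degree count at $u_{\ell+1}$; you instead rotate $P$ to $P^*=v_i,g,u_{\ell+1},f_\ell,\ldots,f_2,u_2$ and count at $u_2$. Your verification that $(C,P^*)$ ties $(C,P)$ under rules (a)--(e), that $H''_{(C,P^*)}=H''$, and that $v_i\notin A_2$ forces $u_2\notin g$ is all correct, and the symmetry reducing the claim to one inclusion is legitimate.

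The gap is the degree count itself, which you defer as ``the main obstacle'' and for which the bounds you name do not suffice. With $b_2\le c-2\ell\le 2k-1-2\ell$ (Claim~\ref{bigl2}), $|F^*|\le k-1$ (Lemma~\ref{BL2}(A)), and $|N_{H'_{(C,P^*)}}(u_2)|\le k-2$, you only get $d_H(u_2)\le\binom{k-2}{r-1}+(2k-1-2\ell)+(k-1)$; the slack $3k-2-2\ell$ in the last two terms vastly exceeds the saving $\binom{k-2}{r-2}$ that ``one fewer neighbor'' buys (for $r=3$ that saving is only $k-2$), so no contradiction with~\eqref{main} follows. The two missing ingredients, both present in the paper's proof, are: (i) $|F^*|\le\ell-1$, because $u_2\notin g$ removes the first edge of $P^*$ from $F^*$ --- the exact analog of the paper's step ``$u_{\ell+1}\notin f_1$''; and, decisively, (ii) the joint bound $a_2\le\bigl\lceil\frac{(c-2\ell-1)-b_2}{2}\bigr\rceil\le k-1-\ell-\lfloor b_2/2\rfloor$ (the analog of~\eqref{nbound}), obtained by applying Lemma~\ref{hamp2} to the arc of $C$ at distance greater than $\ell$ from $v_i$ with $I=A_2$ and $B=B_2$, using Claim~\ref{bigsmallcycle} to see that $A_2$ is independent on that arc and avoids the endpoints of the edges in $B_2$. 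It is this trade-off --- each edge of $C$ containing $u_2$ costs two candidate positions for $A_2$ --- that makes $b_2+|F^*|+\binom{a_2+\ell-1}{r-1}$ maximize at $b_2=1$ and fall short of $1+\binom{k-1}{r-1}$, yielding $\binom{k-2}{r-2}\le\ell-1\le k-3$, a contradiction. You never invoke Lemma~\ref{hamp2}, and the count from Claim~\ref{bon2} that you cite treats the case $N_{H''}(u_{\ell+1})\cap V(C)=\emptyset$ and does not transfer; without (ii) the argument does not close.
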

\begin{proof} Suppose $A_{\ell+1}\neq A_2$. By symmetry, we may assume $A_2-A_{\ell+1}\neq \emptyset$. We may rename the vertices in $C$ and edges in $H''$ so that $v_c\in A_2-A_{\ell+1}$ and $f_1$ contains $v_c$ and $u_2$. This new lollipop (we still call it $(C,P)$) remains a best lollipop. So by Claim~\ref{allneighbors}, $N_{H'}(u_{\ell+1})\subseteq \{v_{\ell+1},v_{\ell+2},\ldots,v_{c-\ell-1}\}\cup \{v_c\}$. 
By Claim~\ref{allneighbors}, $B_{\ell+1}\subseteq \{e_\ell,e_{\ell+1},\ldots,e_{c-\ell-1}\}$.
 By Claim~\ref{bigsmallcycle}, if $e_i \in B_{\ell+1}$, then $v_i, v_{i+1} \notin N_{H'}(u_{\ell+1})$, and $N_{H'}(u_{\ell+1}) \cap V(C)$ does not contain two consecutive vertices of $C$. Hence, remembering that $v_c\notin A_{\ell+1}$, we apply
  Lemma~\ref{hamp2} to the (graph) path $Q'=v_{\ell},\ldots,v_{c-\ell}$, $I=A_{\ell+1}$ and $B'=B_{\ell+1}$, and get
 \begin{equation}\label{nbound}
a_{\ell+1} \leq   \Big\lceil \frac{(c-2\ell-1)-b_{\ell+1}}{2}\Big\rceil =\Big\lceil\frac{c-1-b_{\ell+1}}{2}\Big\rceil-\ell\leq
k-1-\ell-\Big\lfloor\frac{b_{\ell+1}}{2}\Big\rfloor.
\end{equation}
Since $u_{\ell+1}\notin f_1$, $|F|\leq \ell-1$. Since $N_{H'}(u_{\ell+1})\subseteq A_{\ell+1}\cup V(P) - \{u_{\ell+1}\}$ and $|V(P)-V(C)-u_{\ell+1}|=\ell-1$,    $|N_{H'}(u_{\ell+1})| \leq a_{\ell+1} + \ell -1$. Combining this with~\eqref{nbound} and~\eqref{basic}, we get
\begin{equation}\label{deg3}
1+{k-1\choose r-1}\leq b_{\ell+1} + (\ell-1) +{k-2-\left\lfloor{b_{\ell+1}}/{2}\right\rfloor\choose r-1}.
\end{equation}
For fixed $k,r,\ell$ satisfying the theorem, the maximum of the sum $b_{\ell+1} +{k-2-\left\lfloor{b_{\ell+1}}/{2}\right\rfloor\choose r-1}$ over nonnegative $b_{\ell+1}$ such that $k-2-\left\lfloor{b_{\ell+1}}/{2}\right\rfloor\geq r-1$ is achieved at 
$b_{\ell+1}=1$. Hence~\eqref{deg3} yields
$1+{k-1\choose r-1}\leq 1+(\ell-1)+{k-2\choose r-1},$
which in turn gives ${k-2\choose r-2}\leq \ell-1\leq k-3$, a contradiction.
 \end{proof}
 
 In view of this claim, let $A=A_{\ell+1}=A_2$ and $a=|A|$. Since $v_c\in A$, 
 instead of~\eqref{nbound} we have 
\begin{equation}\label{nbound2}
a \leq  1+ \Big\lceil \frac{(c-2\ell-1)-b_{\ell+1}}{2}\Big\rceil =1+\Big\lceil\frac{c-1-b_{\ell+1}}{2}\Big\rceil-\ell\leq
k-\ell-\Big\lfloor\frac{b_{\ell+1}}{2}\Big\rfloor.
\end{equation}

 \begin{claim}\label{a+l-1} $|N_{H'}(u_{\ell+1})|= a+\ell-1$, i.e., $N_{H'}[u_{\ell+1}]=A_{\ell+1}\cup V(P)$.
 \end{claim}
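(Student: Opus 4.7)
The containment $N_{H'}[u_{\ell+1}]\subseteq A_{\ell+1}\cup V(P)$ is essentially automatic: by Claim~\ref{allneighbors}, $N_{H'}(u_{\ell+1})\subseteq V(C)\cup V(P)$, and any $H'$-neighbor of $u_{\ell+1}$ that lies in $V(C)$ is also an $H''$-neighbor (since $H'\subseteq H''$), hence belongs to $A_{\ell+1}$. Because $(C,P)$ is an o-lollipop, $V(C)\cap V(P)=\{v_c\}$, and from Claim~\ref{a2al+1} together with $v_c\in A_2$ (via $f_1\in E(H'')$) we get $v_c\in A_{\ell+1}$. Thus $|A_{\ell+1}\cup V(P)|=a+\ell$ and so $|N_{H'}(u_{\ell+1})|\leq a+\ell-1$. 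The real content is the reverse inequality.

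Assume for contradiction that $|N_{H'}(u_{\ell+1})|\leq a+\ell-2$. By Lemma~\ref{bonl}, $\ell\leq k-2<k-1$, so equation~\eqref{bp} forces $b_{\ell+1}=0$. The degree of $u_{\ell+1}$ then splits as
\[
d_H(u_{\ell+1})\;=\;b_{\ell+1}+|F|+d_{H'}(u_{\ell+1})\;=\;|F|+d_{H'}(u_{\ell+1}).
\]
Every $H'$-edge through $u_{\ell+1}$ is an $r$-set consisting of $u_{\ell+1}$ and $r-1$ vertices from $N_{H'}(u_{\ell+1})$, hence $d_{H'}(u_{\ell+1})\leq\binom{|N_{H'}(u_{\ell+1})|}{r-1}\leq\binom{a+\ell-2}{r-1}$. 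Next, from~\eqref{nbound2} with $b_{\ell+1}=0$ we obtain $a\leq k-\ell$, so $a+\ell-2\leq k-2$ and therefore $d_{H'}(u_{\ell+1})\leq\binom{k-2}{r-1}$. The trivial bound $|F|\leq\ell\leq k-2$ completes the estimate
\[
\binom{k-1}{r-1}+1\;\leq\;d_H(u_{\ell+1})\;\leq\;(k-2)+\binom{k-2}{r-1}.
\]

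Using Pascal's identity $\binom{k-1}{r-1}=\binom{k-2}{r-1}+\binom{k-2}{r-2}$, this collapses to $\binom{k-2}{r-2}\leq k-3$. When $r=3$ the left side is $k-2$, already a contradiction. When $r\geq 4$, since $k\geq r+2\geq 6$, we have $\binom{k-2}{r-2}\geq\binom{k-2}{2}=(k-2)(k-3)/2$, and $(k-2)(k-3)/2\leq k-3$ forces $k\leq 4$, contradicting $k\geq 6$. This contradiction gives $|N_{H'}(u_{\ell+1})|=a+\ell-1$, which combined with the already-established containment $N_{H'}[u_{\ell+1}]\subseteq A_{\ell+1}\cup V(P)$ yields equality.

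The only place that requires care is tracking that $b_{\ell+1}=0$ (so the contribution from $E(C)$ drops out) and that $a+\ell-2\leq k-2$; once these are in hand, the argument is a short Pascal-identity computation. No further obstacle is expected, since the degree hypothesis $\delta(H)\geq\binom{k-1}{r-1}+1$ is tight exactly against the bound $(k-2)+\binom{k-2}{r-1}$ produced by dropping one vertex from the expected neighborhood $A_{\ell+1}\cup V(P)$.
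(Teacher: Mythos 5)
Your upper bound $|N_{H'}(u_{\ell+1})|\leq a+\ell-1$ and the overall shape of the contradiction (degree count plus Pascal) match the paper, but there is a genuine gap at the step ``$\ell\leq k-2$, so \eqref{bp} forces $b_{\ell+1}=0$.'' Statement~\eqref{bp} is not a standing fact: it is derived inside the proof of Lemma~\ref{bonl} under the hypothesis $\ell\geq k-1$ (it follows from Claim~\ref{bigl2} only because $c-2\ell\leq 1$ there), so you cannot take its contrapositive once that hypothesis has been refuted. In the Section~7 setting, with $2\leq\ell\leq k-2$, Claim~\ref{bigl2} leaves up to $c-2\ell$ edges of $C$ that may contain $u_{\ell+1}$, and indeed the paper later proves (Claim~\ref{b=1}) that $b_{\ell+1}=1$ — precisely because $b_{\ell+1}=0$ would make $d(u_{\ell+1})$ too small. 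So $b_{\ell+1}$ must be carried as a parameter. The paper's proof does exactly this: it bounds $d_{H'}(u_{\ell+1})\leq\binom{k-2-\lfloor b_{\ell+1}/2\rfloor}{r-1}$ via \eqref{nbound2}, observes that $b_{\ell+1}+\binom{k-2-\lfloor b_{\ell+1}/2\rfloor}{r-1}$ is maximized at $b_{\ell+1}=1$, and arrives at $\binom{k-2}{r-2}\leq|F|$.

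This matters for a second reason: the extra $+1$ coming from $b_{\ell+1}=1$ destroys your endgame when $r=3$. The correct inequality is $\binom{k-2}{r-2}\leq|F|$, and with only the trivial bound $|F|\leq\ell\leq k-2$ this reads $k-2\leq k-2$ for $r=3$ — no contradiction. The paper closes this case with the sharper bound \eqref{d27}, $|F|\leq\max\{2,\ell-1\}\leq k-3$ when $r=3$ (obtained by noting that $f_1,f_2,f_3$ cannot all contain $\{u_2,u_{\ell+1}\}$ when $r=3$). Your $r\geq4$ computation survives the correction, but you need both fixes — keeping $b_{\ell+1}$ general and invoking \eqref{d27} — for the claim to go through.
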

 
 \begin{proof} If $|N_{H'}(u_{\ell+1})|\leq  a+\ell-2$, then by~\eqref{basic} and~\eqref{nbound2},
 \begin{equation}\label{basic3}
1+{k-1\choose r-1}\leq b_{\ell+1}+|F|+{|N_{H'}(u_{\ell+1})|\choose r-1}\leq b_{\ell+1}+|F|+{k-2-\lfloor{b_{\ell+1}}/{2}\rfloor\choose r-1}.  
\end{equation}
For fixed $k,r,\ell,|F|$ satisfying the theorem, the maximum of the RHS of~\eqref{basic3} over suitable $b_{\ell+1}$  is achieved at 
$b_{\ell+1}=1$. Hence~\eqref{basic3} yields $1+{k-1\choose r-1}\leq 1+|F|+{k-2\choose r-1}$, i.e. ${k-2\choose r-2}\leq |F|$.  
Since $\ell\leq k-2$, for $r=3$ by~\eqref{d27}, this gives ${k-2\choose 1}\leq \max\{2,\ell-1\}\leq k-3$, an impossibility,
 and for $r\geq 4$ this yields ${k-2\choose 2}\leq \ell\leq k-2$, which is not true for $k\geq r+2\geq 6$.
\end{proof}

\begin{claim}\label{AA}
For all $v_j,v_{j'}\in A$, either $j'=j$ or $|j'-j|>\ell$ (modulo $c$).
\end{claim}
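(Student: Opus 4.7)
Assume for contradiction that $v_j, v_{j'} \in A$ are distinct with shortest cyclic distance $j' - j \leq \ell$ on $C$ (WLOG $j < j'$). Since $A = A_2 = A_{\ell+1}$ by Claim~\ref{a2al+1}, there exist edges $h, g \in E(H'')$ with $\{v_j, u_2\} \subseteq h$ and $\{v_{j'}, u_{\ell+1}\} \subseteq g$. The main construction, assuming $h \neq g$, is the Berge cycle
\[
C^* \;=\; v_j,\, h,\, u_2,\, f_2,\, u_3,\, \ldots,\, f_\ell,\, u_{\ell+1},\, g,\, v_{j'},\, e_{j'},\, v_{j'+1},\, \ldots,\, e_{c-1},\, v_c,\, e_c,\, v_1,\, e_1,\, \ldots,\, e_{j-1},\, v_j,
\]
which re-routes the short $v_j$-to-$v_{j'}$ arc of $C$ through $P$. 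By Corollary~\ref{bon3} the lollipop is ordinary, so $V(P) - V(C) = \{u_2, \ldots, u_{\ell+1}\}$; hence $C^*$ contains $\ell$ fresh vertices outside $V(C)$ together with the $c - (j'-j) + 1$ vertices of the long $v_{j'}$-to-$v_j$ arc of $C$, giving in total $c - (j'-j) + \ell + 1 \geq c + 1$ distinct vertices. All edges are distinct since $h, g \in E(H'')$ are disjoint from $E(C) \cup \{f_2, \ldots, f_\ell\}$ by the definition of $H''$ and $h \neq g$ by choice. This contradicts the maximality of $|V(C)|$ among best lollipops.

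It remains to show $h \neq g$ can be chosen. If either of the sets $\{e \in H'' : \{v_j, u_2\} \subseteq e\}$ and $\{e \in H'' : \{v_{j'}, u_{\ell+1}\} \subseteq e\}$ has more than one element, distinctness is immediate. Otherwise a single edge $e_* \in H''$ is simultaneously the unique $H''$-edge containing $\{v_j, u_2\}$ and the unique one containing $\{v_{j'}, u_{\ell+1}\}$, so $\{v_j, v_{j'}, u_2, u_{\ell+1}\} \subseteq e_*$ and $r \geq 4$. Exploiting the symmetry noted right after Claim~\ref{a2al+1} (swapping the endpoints $u_2$ and $u_{\ell+1}$), I then attempt the same construction with edges $h' \supseteq \{v_j, u_{\ell+1}\}$ and $g' \supseteq \{v_{j'}, u_2\}$ and $P$ traversed in reverse; the only remaining obstruction is that this $e_*$ is the unique $H''$-edge for each of the four pairings $\{v_j, v_{j'}\} \times \{u_2, u_{\ell+1}\}$.

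This fully degenerate case is the main obstacle. Any vertex of $e_* \setminus (V(C) \cup V(P))$ would let us extend $P$ past $u_{\ell+1}$ via the edge $e_*$, producing a better lollipop by Rule~(b) and contradicting the choice of $(C,P)$; hence $e_* \subseteq V(C) \cup V(P)$. I then plan to use $e_*$ in place of one of $f_1$ or some internal $f_m$ (in the spirit of Claim~\ref{bestp}(B)) to obtain an alternative best lollipop whose endpoint is no longer $u_{\ell+1}$; applying the already-established identity $N_{H'}[u_{\ell+1}] = A \cup V(P)$ from Claim~\ref{a+l-1} to the new endpoint, combined with the fact that $e_*$ contains two distinct vertices $v_j, v_{j'}$ of $C$ at distance at most $\ell$, should either allow a direct vertex extension of $P$ or yield a longer cycle, completing the contradiction.
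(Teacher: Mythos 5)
Your ``main construction'' can never actually be invoked: by Claim~\ref{shortpath}(3) with $q=2$, \emph{any} edge of $H''$ containing $\{v_j,u_2\}$ and \emph{any} edge of $H''$ containing $\{v_{j'},u_{\ell+1}\}$ must coincide whenever $0<|j'-j|\leq \ell$, so $h=g$ always; the same applies to the reversed pairing $h',g'$. Consequently your ``fully degenerate case'' --- a single edge $e_*\supseteq\{v_j,v_{j'},u_2,u_{\ell+1}\}$ that is the unique $H''$-edge realizing all four pairings --- is not an exceptional subcase but the \emph{only} case, and it carries the entire weight of the proof. For that case you offer a plan rather than an argument: ``should either allow a direct vertex extension of $P$ or yield a longer cycle'' is not substantiated, and it is not clear that the proposed edge swaps produce either outcome. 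A further small hole: the step ``any vertex of $e_*\setminus(V(C)\cup V(P))$ lets us extend $P$ past $u_{\ell+1}$'' fails when $e_*=f_1$ (which is possible, since $f_1\in E(H'')$ contains $u_2$ and $v_c\in A$), because $f_1$ is already an edge of $P$ and cannot be reused to extend it. So there is a genuine gap.

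The paper closes the degenerate case not by rerouting but by counting degrees at $u_{\ell+1}$: the forced uniqueness of the connecting edge means at most one edge of $H'$ contains $u_{\ell+1}$ together with one of the two close vertices of $A$, whence $d_{H'}(u_{\ell+1})\leq 1+{|N_{H'}(u_{\ell+1})|-2\choose r-1}$. Combined with $|F|\leq\ell\leq k-2$, the bound~\eqref{nbound2} on $a$ (hence on $|N_{H'}(u_{\ell+1})|=a+\ell-1$ via Claim~\ref{a+l-1}), and optimizing over $b_{\ell+1}$, this forces $d(u_{\ell+1})<{k-1\choose r-1}+1$, contradicting~\eqref{main}. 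Some quantitative argument of this kind appears unavoidable here; your purely structural approach, as written, does not reach a contradiction.
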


\begin{proof} Suppose the claim fails. By symmetry, we may assume $v_c,v_j\in A$ and $1\leq j\leq \ell$. 


By Claim~\ref{shortpath}(3), if there exists $e,f \in E(H'')$ such that $\{v_c, u_2\} \subset e$ and $\{v_j, u_{\ell+1}\} \subset f$, then $f=e$. Thus the only edge of $H''$ containing $v_c$ or $v_j$ and $u_{\ell+1}$ is $f_1$. Hence
$d_{H'}(u_{\ell+1})\leq 1+ {|N_{H'}(u_{\ell+1})|-2 \choose r-1}$.
Therefore, by~\eqref{nbound2} instead of~\eqref{deg3} we get
\begin{equation}\label{deg3'}
1+{k-1\choose r-1}\leq b_{\ell+1} + \ell+1 +{k-3-\left\lfloor{b_{\ell+1}}/{2}\right\rfloor\choose r-1}.
\end{equation}
For fixed $k,r,\ell$ satisfying the theorem, the maximum of the sum ${k-3-\left\lfloor{b_{\ell+1}}/{2}\right\rfloor\choose r-1}+b_{\ell+1} $ over suitable $b_{\ell+1}$  is achieved at 
$b_{\ell+1}=1$. Hence~\eqref{deg3'} together with $\ell\leq k-2$ yields
$1+{k-1\choose r-1}\leq 1+(\ell+1)+{k-3\choose r-1}\leq k+{k-3\choose r-1},$
which  is not true when $k\geq r+2\geq 5$.
 \end{proof}

\begin{claim}\label{PN'}
Each $f_i \in F$ is contained in $N_{H'}[u_{\ell+1}]$. 
\end{claim}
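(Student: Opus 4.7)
The plan is to argue by contradiction. Suppose $f_m \in F$ contains a vertex $w \notin N_{H'}[u_{\ell+1}]$. Since Claim~\ref{a+l-1} gives $N_{H'}[u_{\ell+1}] = A \cup V(P)$, we have $w \notin V(P)$ and $w \notin A$. Because $f_m$ also contains the three distinct vertices $u_m, u_{m+1}, u_{\ell+1} \in V(P)$, the edge $f_m$ has at least four vertices, forcing $r \geq 4$. This disposes of $r=3$ outright: there $f_m = \{u_m, u_{m+1}, u_{\ell+1}\} \subseteq V(P) \subseteq N_{H'}[u_{\ell+1}]$, contradicting the existence of $w$.

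For $r \geq 4$, first suppose $w = v_j \in V(C) - V(P)$ with $j \neq c$. I use $f_m$ as an ``extra'' edge joining $v_j$ to $u_{\ell+1}$ to form two candidate cycles,
\[C_1 = v_1, e_1, \ldots, v_j, f_m, u_{\ell+1}, f_\ell, u_\ell, \ldots, f_1, v_c, e_c, v_1,\]
\[C_2 = v_{j+1}, e_{j+1}, \ldots, v_c, f_1, u_2, \ldots, u_{\ell+1}, f_m, v_j, e_j, v_{j+1},\]
with $j+\ell+1$ and $c-j+\ell+1$ vertices respectively. If $j \geq c-\ell$ then $|V(C_1)| > c$, and if $j \leq \ell$ then $|V(C_2)| > c$, either way contradicting the maximality of $c$. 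In the remaining window $\ell+1 \leq j \leq c-\ell-1$, I apply Claim~\ref{bigsmallcycle} to $g = f_m \notin E(C)$ (containing $\{u_{m+1}, v_j\}$): this forces $e_{j-1}, e_j$ to avoid $V(P) - u_1$ and rules out any $H''$-edge meeting both $V(P) - u_1$ and $\{v_{j-1}, v_{j+1}\}$, so in particular $v_{j\pm 1} \notin A$. Combining this with Claim~\ref{AA}'s spacing of $A$-vertices at cyclic distance $>\ell$ and the description $N_{H'}[u_{\ell+1}] = A \cup V(P)$, one refines $C_1$ or $C_2$ by detouring through an $H'$-edge $h'(u_{\ell+1}, u_q)$ for a well-chosen $q$ (and, if necessary, through a second $A$-vertex) to push the vertex count past $c$.

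Next suppose $w \notin V(C) \cup V(P)$. If $m = \ell$, the pair $(C, P^{**})$ with $P^{**} = u_1, f_1, \ldots, u_\ell, f_\ell, w$ is a good lollipop having the same $|V(C)|$ and $|E(P^{**})|$ as $(C,P)$. Re-running the proof of Claim~\ref{bon2} for $(C, P^{**})$ --- which only uses the degree hypothesis on $w$ together with Claim~\ref{bigl2} --- yields an edge $e \in E(H'')$ with $w \in e$ meeting $V(C)$; the two-candidate-cycle construction of the previous case, with $e$ playing the role of $f_m$, then exceeds $c$ for every admissible position of the corresponding $v_j$. If instead $m < \ell$, apply Claim~\ref{bestp}(A) to reorder $P$ into a best lollipop $(C, P')$ whose last vertex is $u_{m+1}$; Claims~\ref{a2al+1} and~\ref{a+l-1} applied to $(C, P')$ give $N_{H'}[u_{m+1}] = A \cup V(P)$, so $w$ still lies outside, and the previous analysis applies to $(C, P')$.

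The main obstacle is the middle-$j$ subcase of the first case: neither $C_1$ nor $C_2$ alone is long enough, so the proof really needs the structural input from Claims~\ref{bigsmallcycle} and~\ref{AA} to be combined with the explicit form of $N_{H'}[u_{\ell+1}]$ to guarantee the existence of a suitable detour edge, and the bookkeeping tracking how many $V(C)$-vertices the refined cycle omits against how many $V(P)$- and $A$-vertices it picks up is delicate.
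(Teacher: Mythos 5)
There is a genuine gap, and it appears at the very first step of your main construction. The cycles
\[C_1 = v_1, e_1, \ldots, v_j, f_m, u_{\ell+1}, f_\ell, u_\ell, \ldots, f_1, v_c, e_c, v_1, \qquad
C_2 = v_{j+1}, e_{j+1}, \ldots, v_c, f_1, u_2, \ldots, u_{\ell+1}, f_m, v_j, e_j, v_{j+1}\]
are not valid Berge cycles: each uses $f_m$ twice, once as the connector between $v_j$ and $u_{\ell+1}$ and once again as the path edge joining $u_m$ to $u_{m+1}$ inside the traversal of $P$. A Berge cycle requires distinct edges, so both candidates fail before any length count begins. The paper's proof avoids exactly this trap: since $u_m \in V(P) \subseteq N_{H'}[u_{\ell+1}]$ by Claim~\ref{a+l-1}, there is an $H'$-edge $g \supseteq \{u_m, u_{\ell+1}\}$, and one first reroutes $P$ into $P_1 = u_1, f_1, \ldots, u_m, g, u_{\ell+1}, f_\ell, \ldots, u_{m+1}$, which drops $f_m$ from the path entirely and (by Claim~\ref{bestp}) is still a best lollipop. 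Now $f_m$ is free: if $w \notin V(C)$ one appends $f_m, w$ to $P_1$ and gets a longer path, contradicting Rule~(b); if $w = v_j \in V(C)$ one forms the best lollipop $(C, P_2)$ with $P_2$ rooted at $v_j$ and ending at $u_2$, applies Claim~\ref{a+l-1} to \emph{it}, and concludes $v_j \in A$, contradicting $w \notin N_{H'}[u_{\ell+1}] \supseteq A$. No long-cycle construction or length bookkeeping is needed at all.

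Beyond the edge-reuse problem, the part of your argument you flag as ``delicate'' is in fact the entire content of the case $w \in V(C)$: for $\ell+1 \leq j \leq c-\ell-1$ you give no concrete detour, only the assertion that Claims~\ref{bigsmallcycle} and~\ref{AA} should let one ``push the vertex count past $c$,'' and it is not clear this can be made to work (the vertices of $A$ on $C$ can be far from $v_j$, so there is no guaranteed short chord). Similarly, in the case $w \notin V(C) \cup V(P)$ your lollipop $(C, P^{**})$ ending at $w$ need not be a \emph{best} lollipop (Rule~(c) can distinguish it from $(C,P)$), so Claims~\ref{a2al+1}--\ref{AA}, which were proved for the best lollipop's endpoint, do not automatically transfer to $w$. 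The correct move in that case is the one-line path extension described above. Your $r=3$ observation is correct but is subsumed by the general argument.
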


\begin{proof} 
Assume there exists $f_i$ not contained in $N_{H'}[u_{\ell+1}]$. Since $A_2\supseteq f_1\cap V(C)$, $A_2\neq \emptyset$. So,
by Claims~\ref{a2al+1} and~\ref{a+l-1},
$d_{H'}(u_{\ell+1})> 0$.
Let $w\in f_i-N_{H'}[u_{\ell+1}]$. Also by Claim~\ref{a+l-1}, $u_i \in N_{H'}(u_{\ell+1})$. Suppose first $i=1$. Let $v_j\in A_{\ell+1}$. By Claim~\ref{a+l-1}, there is an edge $h\in E(H')$ containing $\{u_{\ell+1},v_j\}$. Then path
$P_0=v_j,h,u_{\ell+1},f_{\ell},u_{\ell},\ldots,u_2,f_1,w$ is longer than $P$, a contradiction.


Suppose now $i\geq 2$.
Let $g\in E(H')$ contain $\{u_{\ell+1},u_i\}$. Let $P_1=u_1,f_1,\ldots,u_i,g,u_{\ell+1},f_{\ell},u_{\ell},\ldots,u_{i+1}$. Since $V(P_1)-V(C)=V(P)-V(C)$ and $f_i\not\subset V(P)$, the lollipop $(C,P_1)$ is a best lollipop.
If $w\notin V(C)$, then by appending to $P_1$ edge $f_i$ and vertex $w$ we get a better lollipop, a contradiction.
So, $w\in V(C)-A$, say $w=v_j$. Let $P_2=v_j,f_i,u_{i+1},f_{i+1},\ldots,u_{\ell+1},g,u_i,f_{i-1},\ldots,u_2$. Again,
$(C,P_2)$ is a best lollipop. Define $H_2'$ to be the hypergraph with $E(H_2') = E(H) - E(C) - E(P_2)$, and $H_2'' = H_2' + f_i$. Note that $H_2'$ and $H_2''$ play the role of $H'$ and $H''$ respectively for the best lollipop $(C,P_2)$. Moreover, define $A'_2 = N_{H_2''}(u_2) \cap V(C)$ ($A'_2$ plays the role of $A_{\ell+1}$). Then many of the claims we proved for $(C,P)$ also apply to $(C,P_2)$. Namely, $N_{H_2'}[u_2]= A'_2 \cup V(P_2) =  A'_2 \cup V(P)$ by Claim~\ref{a+l-1}. Since $v_j \in f_i$, we have $v_j \in A_2'$, so $v_j \in N_{H_2'}(u_2)$ and there exists some edge in $ E(H) - E(C) - E(P_2)$ containing both $u_2$ and $v_j$. Since $E(H) - E(C) - E(P_2) \subseteq E(H'')$, This implies that $v_j \in A_2 = A$, a contradiction.
\end{proof}
By Claim~\ref{PN'}, instead of~\eqref{basic} we have
\begin{equation}\label{basic2}
d(u_{\ell+1}) \leq {|N_{H'}(u_{\ell+1})| \choose r-1} + b_{\ell+1}  \leq {a + \ell-1 \choose r-1} + b_{\ell+1} \leq {k-1-\lfloor b_{\ell+1}/2 \rfloor \choose r-1} + b_{\ell+1}.
\end{equation}

\begin{claim}\label{b=1}$b_{\ell+1} = 1$ and $|N_{H'}(u_{\ell+1})| = k-1$.
\end{claim}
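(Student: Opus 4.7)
The plan is to combine the bound~\eqref{basic2}, $d(u_{\ell+1})\le\binom{N}{r-1}+b_{\ell+1}$ where $N:=|N_{H'}(u_{\ell+1})|$, with the constraint $N\le k-1-\lfloor b_{\ell+1}/2\rfloor$ from~\eqref{nbound2} and the degree hypothesis $d(u_{\ell+1})\ge 1+\binom{k-1}{r-1}$. Writing $b=b_{\ell+1}$ and $t=\lfloor b/2\rfloor$, these three inputs yield
\[
1+\binom{k-1}{r-1}\;\le\;\binom{N}{r-1}+b\;\le\;\binom{k-1-t}{r-1}+2t+1.
\]

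First I would dispose of $b=0$: the displayed inequality would force $\binom{N}{r-1}\ge 1+\binom{k-1}{r-1}$, which is impossible since $N\le k-1$. To show $b\le 1$, I would assume $b\ge 2$, so $t\ge 1$, and rewrite the inequality via the hockey-stick identity as
\[
2t\;\ge\;\binom{k-1}{r-1}-\binom{k-1-t}{r-1}\;=\;\sum_{j=0}^{t-1}\binom{k-2-j}{r-2}.
\]
In the ``generic'' range $1\le t\le k-r-1$, every summand has $k-2-j\ge r$, hence is at least $\binom{r}{r-2}=\binom{r}{2}\ge 3$ (using $r\ge 3$); the right-hand side is then at least $3t$, contradicting $2t\ge 3t$.

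The delicate case is the ``boundary'' range $t\ge k-r$, where the binomials degenerate. Here $N\le k-1-t\le r-1$, so $\binom{N}{r-1}\le 1$, and the original inequality gives $b\ge\binom{k-1}{r-1}$. To close this case I would invoke Claim~\ref{bigl2}(1), applicable since $(C,P)$ is an $o$-lollipop by Corollary~\ref{bon3}: it forbids $u_{\ell+1}$ from the $2\ell$ edges $\{e_1,\dots,e_{\ell-1}\}\cup\{e_c,\dots,e_{c-\ell}\}$, yielding $b\le c-2\ell\le 2k-1-2\ell\le 2k-5$ since $\ell\ge 2$. Thus $\binom{k-1}{r-1}\le 2k-5$, which I would verify fails for every $k\ge r+2\ge 5$ (at $k=r+2$ it reduces to $(r-1)(r-2)\le 0$, and the polynomial $\binom{k-1}{r-1}$ outpaces the linear $2k-5$ for larger $k$).

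Once $b=1$ is established, the first displayed inequality reduces to $\binom{N}{r-1}\ge\binom{k-1}{r-1}$, so monotonicity of the binomial and $N\le k-1$ force $N=k-1$. I expect the boundary computation to be the main technical obstacle, since there the estimate $\binom{N}{r-1}\le\binom{k-1-t}{r-1}$ is very weak; the auxiliary bound $b\le 2k-1-2\ell$ from Claim~\ref{bigl2} provides exactly the leverage needed to finish.
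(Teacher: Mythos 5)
Your proposal is correct and follows essentially the same route as the paper: combine \eqref{basic2} with $\delta(H)\ge 1+\binom{k-1}{r-1}$, rule out $b_{\ell+1}=0$ trivially and $b_{\ell+1}\ge 2$ by showing the right-hand side falls below $\delta(H)$, then deduce $|N_{H'}(u_{\ell+1})|=k-1$ by monotonicity. The only difference is that you treat the degenerate range $\lfloor b_{\ell+1}/2\rfloor\ge k-r$ explicitly via the bound $b_{\ell+1}\le c-2\ell$ from Claim~\ref{bigl2}, whereas the paper simply asserts the maximum of the right-hand side over $b_{\ell+1}\ge 2$ is attained at $b_{\ell+1}=3$; your version is, if anything, the more careful one.
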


\begin{proof}
If $b_{\ell+1} = 0$, then by~\eqref{basic2}, $d(u_{\ell+1}) \leq {k-1 \choose r-1} < \delta(H)$. On the other hand, if $b_{\ell+1} \geq 2$ then the maximum of the RHS of~\eqref{basic2} is achieved at $b_{\ell+1} =3$, and so is
 at most ${k-1 - 1 \choose r-1} + 3 \leq {k-2 \choose r-1} + k-2 < \delta(H).$ This proves $b_{\ell+1}=1$. In view of this, if $|N_{H'}(u_{\ell+1})| \leq k-2$, then $d(u_{\ell+1}) \leq {k-2 \choose r-1} + 1 < \delta(H)$.
\end{proof}
By Claims~\ref{a+l-1} and~\ref{b=1}, we must have \[a-1 = |A - V(P)| = k-1 - \ell = \lceil \frac{(2k-1) -1 - b_{\ell+1}}{2} \rceil - \ell = \lceil \frac{(2k-1) -1 -1}{2} \rceil-\ell.\]  

We apply Lemma~\ref{hamp2} to the (graph) path $v_1, v_2,  \ldots, v_{c-1}$, with $I  = A - V(P)$, $B = B_{\ell+1}$, and $c-1 = 2k-2$. In particular since the numerator $2k-3$ is odd, the ``equality" part of Lemma~\ref{hamp2} holds. That is, 
\begin{equation}\label{equality1}
\hbox{for every $1\leq i \leq c-1$,   $v_i \in A- V(P)$, or $e_i \in B_{\ell+1}$, or $e_{i-1} \in B_{\ell+1}$ or $\{v_{i-1}, v_{i+1}\} \in A - V(P)$. }
\end{equation}We now complete the proof of Theorem~\ref{mainthm} by showing that for some $i$, ~\eqref{equality1} does not hold.

Since $|V(P) - \{u_{\ell+1}\}| = \ell \leq k-2$ and $|N_{H'}(u_{\ell+1})| =k-1$, $A$ contains some $v_j \in V(C) - \{v_c\}$. By Claim~\ref{allneighbors}, $j \in \{\ell+1, \ldots, c-\ell-1\}$.  By symmetry, we may assume $j < c-\ell-1$. 

We now show that~\eqref{equality1} does not hold for  $i=j+1$. By Claim~\ref{AA}, $v_{j+1}$ and $v_{j+2}$
are not in $A-V(P)$.
By Claim~\ref{bigsmallcycle}(a), $e_{j}, e_{j+1} \notin B_{\ell+1}$. 
 Thus~\eqref{equality1} fails, completing the proof of Theorem~\ref{mainthm}.

\bigskip
\bigskip

{\bf Acknowledgment.} We thank Sam Spiro for his helpful discussions on this topic and the referees for valuable comments.

\end{document}